\documentclass[reqno]{amsart}
\usepackage{amssymb,amscd,verbatim, amsthm,graphics, color,latexsym,amsmath,multicol,mathtools}
\usepackage{fancybox,lscape,multirow}
\usepackage{longtable}

\usepackage[all]{xy}

\usepackage[top=2.7cm, bottom=2.7cm, left=2.7cm, right=2.7cm]{geometry}

\newcommand \C[1]{{\mathcal #1}}

\newcommand \wti[1]{{\widetilde {#1}}}

\newcommand\fg{\mathfrak g}

\newcommand \bC{{\mathbb C}}
\newcommand \bF{{\mathbb F}}

\newcommand \bO{{\mathbb O}}

\newcommand\CG{{\C G}}

\newcommand\CO{{\C O}}
\newcommand\CL{{\C L}}

\newcommand\CU{{\C U}}

\newcommand\fh{{\mathfrak h}}

\newcommand\sk{{\mathsf k}}

\newcommand\AZ{{\mathsf{AZ}}}
\newcommand\GL{{\mathsf{GL}}}

\newcommand\bfG{{\mathbf G}}
\newcommand\rG{{\mathrm G}}

\newtheorem{theorem}{Theorem}[section]
\newtheorem{conjecture}[theorem]{Conjecture}
\newtheorem{corollary}[theorem]{Corollary}
\newtheorem{lemma}[theorem]{Lemma}
\newtheorem{proposition}[theorem]{Proposition}
\newtheorem{definition}[theorem]{Definition}
\newtheorem{remark}[theorem]{Remark}
\newtheorem{example}[theorem]{Example}

\newcommand\Ind{\operatorname{Ind}}

\newcommand\St{\mathsf{St}}

\newcommand\Irr{\mathsf{Irr}}

\newcommand\res{\mathsf{res}}

\newcommand\ind{\mathsf{ind}}

\newcommand\Fr{\mathsf{Fr}}
\newcommand\Ft{\mathsf{FT}}
\newcommand\WF{\mathsf{WF}}

\newcommand\FT{\operatorname{FT}}

\def\<{\langle} 
\def\>{\rangle}

\def\Ureg{{\mathrm G_2}}
\def\Usreg{{\mathrm G_2(a_1)}}
\def\Usr{{\widetilde A_1}}
\def\Ulr{{A_1}}
\def\Utr{{1}}

\numberwithin{equation}{section}

\begin{document}

\title[The Wavefront Set: Bounds]{The Wavefront Set: Bounds for the Langlands parameter}

\author
{Dan Ciubotaru}
        \address[D. Ciubotaru]{Mathematical Institute, University of Oxford, Oxford OX2 6GG, UK}
        \email{dan.ciubotaru@maths.ox.ac.uk}

\author
{Ju-Lee Kim}
        \address[J.-L. Kim]{Department of Mathematics, M.I.T., Cambridge MA 02139, USA}
        \email{juleekim@mit.edu}

\begin{abstract}{For an irreducible smooth representation of a connected reductive $p$-adic group, two important associated invariants are the wavefront set and the (partly conjectural) Langlands parameter. While a wavefront set consists of $p$-adic nilpotent orbits, one constituent of the Langlands parameter is a complex nilpotent orbit in the dual Lie algebra. For unipotent representations in the sense of Lusztig, the corresponding nilpotent orbits on the two sides are related via the Lusztig--Spaltenstein duality, \cite{CMBO21,CMBO23}. In this paper, we formulate a general upper-bound conjecture and several variants relating the nilpotent orbits that appear in  the wavefront set and in the Langlands parameter. We also verify these expectations in some cases, including the depth-zero supercuspidal representations of classical groups and all the irreducible representations of $G_2$.}
\end{abstract}

\subjclass[2020]{22E50, 22E35}

\maketitle

\section{\bf Introduction: a conjecture}

Let $\sk$ be a nonarchimedean local field of characteristic $0$ with residue field $\mathbb F_q$. Let $\bar\sk$ be a separable closure of $\sk$ and let $K\subset \bar\sk$ denote the maximal unramified extension. Let $W_\sk$ be the Weil group with inertia subgroup $I_\sk$ and norm $||~||$. Let $W'_\sk$ be the Weil-Deligne group, i.e., $W'_\sk=W_\sk\ltimes \bC$, where the action is defined by $w\cdot z=||w|| z$, $w\in W_\sk$, $z\in \bC$.
Let $\Fr$ denote the geometric Frobenius element, which generates $W_\sk/I_\sk$.

Let $\bfG$ be a connected reductive $\sk$-group that splits over an unramified extension. The {\it pure inner forms} of $\bfG$, in the sense of \cite[Definition 2.6]{vogan-llc} are parametrized by the first Galois cohomology group $H^1(\sk,\bfG)$. For every $\omega\in H^1(\sk,\bfG)$, let $\bfG^\omega(\sk)$ denote the corresponding pure inner form. Let $G^\vee$ be the complex dual group to $\bfG$. Let ${}^L\!G=G^\vee\rtimes W_\sk$ be the $L$-group.

\subsection{Langlands parameters} We briefly recall the conjectural Langlands parametrization of the irreducible $\bfG^\omega(\sk)$-representations, in the formulation of \cite[\S4]{vogan-llc}, generalizing the Kazhdan--Lusztig geometric parametrization of representations with Iwahori-fixed vectors \cite{KL}. A \emph{Langlands parameter} is a group homomorphism
\begin{equation}
\varphi: W'_\sk\to  {}^L\!G
\end{equation}
satisfying certain compatibility conditions. Let $A_\varphi$ denote the group of components of the centralizer in $G^\vee$ of the image of $\varphi$. One expects a one-to-one correspondence
\[ \sqcup_{\omega\in H^1(\sk,\bfG)}\Irr \,\bfG^\omega(\sk)\leftrightarrow G^\vee\text{-conjugacy classes of }(\varphi,\phi),
\]
where $\varphi$ is a Langlands parameter and $\phi$ is an irreducible representation of $A_\varphi$. The pairs $(\varphi,\phi)$ are called \emph{enhanced} Langlands parameters. Let $\Pi(\varphi)$ denote the Langlands packet, i.e., the set of representations parametrized by a fixed ($G^\vee$-orbit of) $\varphi$. We need a geometric reformulation of this correspondence.

Fix a group homomorphism $\lambda: I_\sk\to  {}^L\!G$ with finite image. The set of enhanced Langlands parameters for which $\varphi|_{I_\sk}=\lambda$ admits the following interpretation. Let $H^\vee={G^\vee}^{\lambda(I_\sk)}$, a possibly disconnected reductive group, and $\fh^\vee={\fg^\vee}^{\lambda(I_\sk)}$, a reductive subalgebra of $\fg^\vee$, denote the fixed points.  

Let $s$ be $\varphi(\Fr)\in {}^L\!G$, a semisimple element. Since $\Fr$ normalizes $I_\sk$, it follows that $s$ normalizes $H^\vee$ (and $\fh^\vee$). Let $\fh^\vee_q$ denote the $q$-eigenspace of $\mathrm{Ad}(s)$. Let $H^\vee(s)$ be the centralizer of $s$ in $H^\vee$. Then $H^\vee(s)$ acts on $\fh^\vee_q$ with finitely many orbits. There is a one-to-one correspondence between $G^\vee$-orbits of enhanced Langlands parameters $(\varphi,\phi)$ such that $\varphi(\Fr)=s$ and $\varphi|_{I_\sk}=\lambda$, and pairs $(\CO^\vee, \mathcal L^\vee)$ where
\begin{itemize}
\item $\CO^\vee$ is an $H^\vee(s)$-orbit in $\fh^\vee_q$;
\item $\mathcal L^\vee$ is an irreducible $H^\vee(s)$-equivariant local system supported on $\CO^\vee$.
\end{itemize}
Denote by $\Pi(\lambda,s,\CO^\vee)=\Pi(\varphi)$ the corresponding $L$-packet, each of whose member $\pi(\lambda,s,\CO^\vee,\CL^\vee)$ should be parametrized by the equivariant local systems $\mathcal L^\vee$ on $\CO^\vee$. 
A desideratum is that if $\Pi(\lambda,s,\CO^\vee)$ consists of tempered representations then $\CO^\vee$ is dense in $\fh^\vee_q$.

\subsection{Aubert--Zelevinsky involution} Let $\AZ$ denote the normalized Aubert--Zelevinsky involution \cite[Definition 1.5, Corollary 3.9]{Au-inv} on $\Irr\, \bfG^\omega(\sk)$ for every form $\bfG^\omega(\sk)$. If $\varphi_\pi$ is the Langlands parameter of $\pi$, then it is expected that $\varphi_\pi(W_\sk)$ and $\varphi_{\AZ(\pi)}(W_\sk)$ are $G^\vee$ conjugate. Assuming this, denote the enhanced Langlands parameter of $\AZ(\pi)$ as follows
\begin{equation}
\AZ(\pi(\lambda,s,\CO^\vee,\CL^\vee))=\pi(\lambda,s,\CO^\vee_{\AZ},\CL^\vee_{\AZ}).
\end{equation}
It is expected that the {enhanced} Langlands parameter of $\AZ(\pi)$ is related to that of $\pi$ via the Fourier transform \cite[\S3.7]{KS} on the appropriate space of perverse sheaves of Langlands parameters, see also \cite[\S3]{evens-mirkovic}. This is known to hold for all representations with Iwahori-fixed vectors by \cite[Theorem 0.1]{evens-mirkovic}.

More precisely, suppose that $\pi$ has Iwahori-fixed vectors, so $\lambda(I_\sk)=1$ and that $G^\vee$ is simply connected. The geometric space of parameters consists of irreducible $G^\vee(s)$-equivariant perverse sheaves on the complex vector space $\fg^\vee_q$. The connected reductive group $G^\vee(s)$ acts with finitely many orbits on $\fg^\vee_q$. This implies that $G^\vee(s)$ also acts with finitely many orbits on the dual vector space $(\fg^\vee_q)^*$. Identify $(\fg^\vee_q)^*$ with $\fg^\vee_{q^{-1}}$ by using a fixed nondegenerate invariant symmetric form on $\fg^\vee$. Pyasetsky's duality \cite{zelevinsky-p-adic} says that there exists a one-to-one correspondence between the orbits in $\fg^\vee_q$ and in $\fg^\vee_{q^{-1}}$, $\CO^\vee\to P(\CO^\vee)$, given as follows: for each $\CO^\vee\subset \fg^\vee_q$, $P(\CO^\vee)\subset \fg^\vee_{q^{-1}}$ is the unique orbit such that the conormal bundles have the same closures 
\[
\overline {T_{\CO^\vee} \fg^\vee_q}=\overline {T_{P(\CO^\vee)} \fg^\vee_{q^{-1}}}
\]
in $ \fg^\vee_q\times  \fg^\vee_{q^{-1}}$.

Let $\FT$ denote the Fourier transform from $G^\vee(s)$-equivariant perverse sheaves on $\fg^\vee_q$ to $G^\vee(s)$-equivariant perverse sheaves on $\fg^\vee_{q^{-1}}$, \cite[\S3.2]{evens-mirkovic}. Let $\mathsf{IC}(\CO,\CL)$ denote the simple perverse sheaf given by the intersection cohomology complex defined by the equivariant irreducible local system $\CL$ supported on $\CO$. Then $\FT(\mathsf{IC}(\CO^\vee,\CL^\vee))=\mathsf{IC}(\CO^\vee_\Ft,\CL^\vee_\Ft)$ for an orbit $\CO^\vee_\Ft$ in $\fg^\vee_{q^{-1}}$ and an equivariant local system $\CL^\vee_\Ft$. We emphasize that $\CO^\vee_\Ft$ depends on $\CL^\vee$ as well, not just on $\CO^\vee$. If $\fg^\vee=\mathfrak{gl}(n)$, so that all local systems that appear are trivial, \cite[Proposition 7.2]{evens-mirkovic} shows that
\[
\CO^\vee_\Ft=P(\CO^\vee).
\]
In general however, this is not the case, as one can see already for $G^\vee=Sp(4)$ (\cite[Remark 7.5]{evens-mirkovic}). We identify, as we may, the $G^\vee(s)$-orbits on $\fg^\vee_{q^{-1}}$ with the $G^\vee(s^{-1})$-orbits on $\fg^\vee_q$. Then the main result (Theorem 0.1) in {\it loc. cit.} is that  
\begin{equation}\label{e:AZ-geom}
\AZ(\pi(1,s,\CO^\vee,\CL^\vee))^*=\pi(1,s^{-1},\CO^\vee_\Ft,\CL^\vee_\Ft),
\end{equation}
where $*$ in the left hand side denotes the contragredient. In terms of the previous notation, this means, in particular, that for Iwahori-spherical representations, $G^\vee\cdot\CO^\vee_{\AZ}=G^\vee\cdot\CO^\vee_{\Ft}$. In our conjectures below, we will be interested in the nilpotent part of the Langlands parameter of $\AZ(\pi)$, that is, in $\mathbb O^\vee_{\AZ}=G^\vee\cdot \CO^\vee_\AZ.$

\subsection{Spaltenstein and Lusztig-Sommers dualities}\label{s:dualities} Let $G$ denote a complex group with maximal torus $T$ with dual $G^\vee$ and dual maximal torus $T^\vee$. Let $W$ denote the Weyl group of $(G,T)$ and $W^\vee$ the Weyl group of $(G^\vee,T^\vee)$. Of course, $W\cong W^\vee$. Let $\CU_G$ and $\CU_{G^\vee}$ denote the sets of unipotent elements in $G$ and $G^\vee$ respectively. Let $G\backslash\CU_G$, $G^\vee\backslash\CU_{G^\vee}$ denote the corresponding sets of conjugacy classes.

Let $d: G\backslash \CU_G\to G^\vee\backslash\CU_{G^\vee}$ and $d^\vee: G^\vee\backslash \CU_{G^\vee}\to G\backslash\CU_{G}$ denote the duality maps defined by Spaltenstein \cite[\S10]{spaltenstein}. The images of $d,d^\vee$ consist of the set of special unipotent conjugacy classes.

Let $\fg$ be the Lie algebra of $G$. We will also make use of the duality map $d_S:G\backslash G\to G^\vee\backslash \CU_{G^\vee}$ from conjugacy classes of $G$ to unipotent conjugacy classes of $G^\vee$, defined by Lusztig \cite[\S13.3]{orange} in the case of special conjugacy classes and generalized by Sommers \cite{sommers-duality}. Suppose $x\in G$ has a Jordan decomposition $x=su$ and assume $s\in T$. Let $W(s)$ denote the centralizer of $s$ in $W$, this is  the Weyl group of the centralizer $G(s)$. Let $E^{W(s)}_{u,1}$ denote the Springer $W(s)$-representation attached to the trivial local system for the unipotent class of $u$ in $G(s)$. Consider the induced representation $\Ind_{W(s)}^W(E^{W(s)}_{u,1})$. There exists a unique irreducible $W$-representation $\hat E$ in this induced representation such that $b_{\hat E}=b_{E^{W(s)}_{u,1}}$, where $b$ denotes the lowest harmonic degree. Regard $\hat E$ as an irreducible representation of $W^\vee$. By Springer's correspondence for $G^\vee$, there exists a unique unipotent conjugacy class $u^\vee$ in $G^\vee$ such that $\hat E$ is realized in the top cohomology of the Springer fiber for $u^\vee$. Define $d_S(x)=u^\vee$. We recall that
\[ d_S|_{G\backslash \CU_G}=d.
\]

We will often identify the unipotent classes in $G^\vee$ with the nilpotent orbits in $\fg^\vee$ and think of the image of the map $d_S$ as lying in the nilpotent cone of $\fg^\vee$.

One can flip the roles of $G$ and $G^\vee$ and have the dual map $d^\vee_S:G^\vee\to G$.

\subsection{Unramified wavefront set} If $\pi$ is an admissible $\bfG^\omega(\sk)$-representation, let ${}^K\WF(\pi)$ denote the unramified wavefront set of $\pi$ as in \cite{okada-WF}. This is a (finite) collection of nilpotent $\bfG^\omega(K)$-orbits in $\mathbf{\mathfrak g}^\omega(K)$. Using the parametrization in {\it loc. cit.}, there is a one-to-one correspondence between the set of nilpotent $\bfG^\omega(K)$-orbits and pairs
\[ (\CO,C):\ \CO \text{ nilpotent $\bfG(\bar \sk)$-orbit in }\fg(\bar\sk),\ C\text{ conjugacy class in }A(\CO),
\]
where $A(\CO)$ is a group of components of the centralizer $C_{\bfG(\bar\sk)}(u)$ in $\bfG(\bar\sk)$ of a representative $u$ of $\CO$. By abuse of notation, we think of ${}^K\WF(\pi)$ as a collection of such pairs $(\CO,C)$. For each $\CO$ choose a representative $u\in \CO$. For each $C$, we may choose a semisimple representative $\tau$ in $C_{\bfG(\bar\sk)}(u)$. Choosing an isomorphism $\bar \sk\cong \mathbb C$, we may identify $\bfG(\bar\sk)$ with the complex group $G=\bfG(\mathbb C)$ with the same root datum, and think of the duality maps in section \ref{s:dualities} as maps between $\bfG(\bar\sk)$ and $G^\vee$. Write
\[d_S(\CO,C)=d_S(\tau u),
\]
which is independent of the choice of $u$ and $\tau$. 
Also write
\[d_S({}^K\WF(\pi))=\bigcup_{(\CO,C)\in {}^K\WF(\pi)} d_S(\CO,C),
\]
a union of nilpotent orbits in $\fg^\vee$. Define $d({}^K\WF(\pi))$ similarly.

Also in \cite{okada-WF}, the notion of {\it canonical unramified wavefront set} ${}^K\!\underline\WF(\pi)$ was introduced. This is a collection of classes under a certain equivalence in ${}^K\WF(\pi)$. The invariant ${}^K\!\underline\WF(\pi)$ is a set of pairs 
\[ (\CO,\bar C):\ \CO \text{ nilpotent $G$-orbit in }\fg,\ \bar C\text{ conjugacy class in }\bar A(\CO),
\]
where $\bar A(\CO)$ is Lusztig's canonical quotient. The duality $d_S$ factors through the canonical quotient, so we may talk about $d_S({}^K\!\underline\WF(\pi))$.

\subsection{Conjectures} Let ${}^{\bar \sk}\WF(\pi)$ denote the geometric wavefront set of an admissible representation of $\bfG^\omega(\sk)$. This is a collection of adjoint nilpotent orbits in $\fg(\bar\sk)$. The nilpotent orbits in $\fg(\bar\sk)$ are ordered with respect to the partial order given by the Zariski closure. The nilpotent orbits in ${}^{\bar \sk}\WF(\pi)$ are by definition non-comparable in the closure order. In almost all known examples, ${}^{\bar \sk}\WF(\pi)$ consists of a single nilpotent orbit if $\pi$ is irreducible, but not always, as recent examples of Tsai \cite{tsai} have shown.

If $\mathcal S_1,\mathcal S_2$ are two sets of nilpotent orbits, we write
\[
\mathcal S_1\le \mathcal S_2,
\]
if for every $\mathcal O_1\in \mathcal S_1$, there exists $\mathcal O_2\in \mathcal S_2$ such that $\mathcal O_1\le \mathcal O_2$ in the closure order.

\begin{conjecture}\label{conj-main}
Let $\pi(\lambda,s,\CO^\vee,\CL^\vee)$ be an irreducible $\bfG^\omega(\sk)$-representation in the $L$-packet $\Pi(\varphi)=\Pi(\lambda,s,\CO^\vee)$. Then
\begin{enumerate}
\item  $ {}^{\bar \sk}\WF(\AZ(\pi))\le d^\vee(G^\vee\cdot \CO^\vee).$ Equivalently,
\item[(1')] $\CO^\vee\subset \overline{d({}^{\bar \sk}\WF(\AZ(\pi)))}$. 
\end{enumerate}
\end{conjecture}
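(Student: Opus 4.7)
The plan is to combine three ingredients: (i) Okada's description of the canonical unramified wavefront set via the Lusztig--Sommers duality $d_S$ applied to the enhanced L-parameter; (ii) the geometric interpretation \eqref{e:AZ-geom} of $\AZ$ as Fourier transform of perverse sheaves, which identifies the nilpotent part of the L-parameter of $\AZ(\pi)$ as $\CO^\vee_\Ft$; and (iii) a closure-order comparison $\CO^\vee\subset\overline{G^\vee\cdot\CO^\vee_\Ft}$ supplied by Pyasetsky duality and the singular-support behavior of Fourier-dual perverse sheaves.

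I would first reduce to the Iwahori-fixed, simply-connected case, where \eqref{e:AZ-geom} applies directly. Applying Okada's formula to $\AZ(\pi)$ gives $d_S({}^K\!\underline\WF(\AZ(\pi))) \ni G^\vee\cdot\CO^\vee_\Ft$. Saturating $K$-orbits to geometric $\bar\sk$-orbits and using the inequality $d(\CO)\ge d_S(\CO,C)$ on pairs with non-trivial $C$, one obtains $d({}^{\bar\sk}\WF(\AZ(\pi)))\ge G^\vee\cdot\CO^\vee_\Ft$ in the closure order. The equivalent form (1') then reduces to $\CO^\vee \subset \overline{G^\vee\cdot\CO^\vee_\Ft}$. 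For this final step, I would use that the singular support of $\mathsf{IC}(\CO^\vee,\CL^\vee)$ sits inside the conormal variety $\overline{T^*_{\CO^\vee}\fg^\vee_q}\subset \fg^\vee_q\times\fg^\vee_{q^{-1}}$, whose second projection contains $\overline{P(\CO^\vee)}$. Since Fourier transform preserves the singular support (up to flipping the two factors), this forces $\overline{\CO^\vee_\Ft}\supseteq \overline{P(\CO^\vee)}$, at least after $H^\vee(s^{-1})$-saturation; Pyasetsky's dimension-reversing property then yields the required closure containment after passing to $G^\vee$-saturations.

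The principal obstacle is the discrepancy $\CO^\vee_\Ft \neq P(\CO^\vee)$ in general, as noted in \cite[Remark 7.5]{evens-mirkovic} for $G^\vee = Sp(4)$: one cannot directly equate the Fourier-transformed orbit with the Pyasetsky dual, and must carefully track the twist of the equivariant local system $\CL^\vee$ under Fourier transform. A second obstacle is that the dualities $d_S$ (entering Okada's formula) and $d^\vee$ (appearing in the conjecture) only coincide on special unipotent classes with trivial semisimple part, so one must argue that the translation between them holds in the presence of the non-trivial semisimple $s$. Finally, extending Okada's formula and the identity \eqref{e:AZ-geom} beyond the Iwahori setting to arbitrary $\lambda$ and pure inner forms is itself a substantial conjectural program, which is required to cover the full conjecture.
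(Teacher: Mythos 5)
This statement is a \emph{conjecture}, not a theorem: the paper does not prove it and explicitly flags that it ``is contingent on the existence of a local Langlands correspondence.'' The authors only \emph{verify} it in special cases (depth-zero supercuspidals of classical groups via Lust--Stevens, tempered representations of $G_2$ via Aubert--Xu and Gan--Savin, and the reduction to tempered representations by parabolic induction in Section \ref{sec:induction}). A ``blind proof'' of the general statement is therefore not something to compare against a proof in the paper; there is none.

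Beyond that, the specific route you propose is contradicted by the paper's own discussion. First, the identity you call ``Okada's formula'' is exactly \eqref{e:okada-variant}, $d_S({}^K\underline\WF(\pi))=G^\vee\cdot\CO^\vee_\Ft$, and the paper shows this is false outside the unipotent real-infinitesimal-character setting: it fails for nongeneric regular depth-zero supercuspidals, and the $G_2$ depth-zero examples in Table \ref{t:G2} show the failure goes in \emph{both} directions (rows 4--5 give $<$, row 6 gives $>$), so no inequality version survives either. Second, the step where you replace $\CO^\vee_\Ft$ by the Pyasetsky dual $P(\CO^\vee)$ via singular supports is precisely what \cite[Remark 7.5]{evens-mirkovic} rules out for $G^\vee = Sp(4)$; the paper quotes this to emphasise that $\CO^\vee_\Ft \neq P(\CO^\vee)$ in general, so the closure containment $\overline{\CO^\vee_\Ft}\supseteq \overline{P(\CO^\vee)}$ you want is not available. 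Third, \eqref{e:AZ-geom} is itself only established for Iwahori-fixed representations (\cite[Theorem 0.1]{evens-mirkovic}) and only \emph{expected} for unipotent representations; the reduction to this case is not a reduction at all for general $\lambda$ and pure inner forms. You acknowledge these obstacles as ``substantial conjectural,'' which is the correct assessment: what you have written is a heuristic sketch of why one might \emph{believe} the conjecture, closer in spirit to the paper's motivating remarks around \eqref{e:AZ-geom} and \eqref{e:okada-variant}, not a proof.

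If you want to contribute actual proof content aligned with the paper, the right target is Section \ref{sec:induction} (Conjecture \ref{conj-main-2}(1) implies Conjecture \ref{conj-main}, via Lemma \ref{l:prop-nil} and the compatibility of $\AZ$ with parabolic induction) or the case-by-case verifications in Sections \ref{s:classical} and \ref{s:G2}.
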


Of course, the conjecture is contingent on the existence of a local Langlands correspondence. 

\begin{remark}While this paper was being finalized, we learned that Conjecture \ref{conj-main}, as well as the connection with Conjecture \ref{conj-main-2} and Jiang's conjecture, have also been formulated independently and studied for classical groups by Hazeltine, Liu, Lo, and Shahidi \cite{HLLS}. In {\it loc. cit.}, the conjectures are stated and investigated for general groups (rather than `inner to split'), see \cite[Conjecture 1.1, Remarks 1.9, 7.11]{HLLS}.
\end{remark}

In terms of the discussion around (\ref{e:AZ-geom}), the conjecture could be rephrased as
\begin{equation}
{}^{\bar \sk}\WF(\pi)\le d^\vee(\mathbb O^\vee_\Ft)=d^\vee(\mathbb O^\vee_{\AZ}),
\end{equation}
whenever we know (or expect) that the action of $\AZ$ on the space of geometric parameter is given by the geometric Fourier transform $\FT$.

\begin{remark} 
The conjecture aims to generalize the recent results for Lusztig unipotent representations (trivial $\varphi(I_\sk)$) with real infinitesimal character, see \cite{CMBO21,CMBO23}, and for unipotent supercuspidal representations \cite{CMBO-cusp}. In these cases, the upper bound is an equality. 

But even in the case of unipotent representations there is an example for the split group $E_7$ \cite[\S1]{CMBO23} that shows that for unipotent representations with nonreal infinitesimal character, the upper bound can be strict. 
At the end of section \ref{s:G2}, we will comment on other possible versions of the conjecture involving the duality $d_S$ and ${}^K\!\underline\WF(\pi)$.
\end{remark}

For $\GL(n)$, Conjecture \ref{conj-main} is true, in fact a stronger statement holds. 
\begin{theorem}[{M\oe glin-Waldspurger \cite[II.2]{MW}}] For every irreducible $\GL(n,\sk)$-representation $\pi$, {${}^{\bar \sk}\WF(\AZ(\pi))$ is a singleton and} 
\begin{equation}\label{e:gl}
    {}^{\bar \sk}\WF(\AZ(\pi))=\{d^\vee(\mathbb O_\pi^\vee)\},
    \end{equation}
    where $\bO_\pi^\vee$ is the $G^\vee$-saturation of the nilpotent Langlands parameter of $\pi$.
\end{theorem}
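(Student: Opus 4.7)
The plan is to reduce the theorem to a combinatorial identity between partitions, using the Zelevinsky classification of $\Irr\GL(n,\sk)$ by multisegments. To each irreducible $\pi$ attach a multisegment $\fm_\pi=\{\Delta_1,\ldots,\Delta_r\}$ so that $\pi$ is the Langlands quotient of the standard module $\Delta_1\times\cdots\times\Delta_r$, where each $\Delta_i$ is a generalized Steinberg representation built from a segment of length $\ell_i$. The nilpotent part of the Langlands parameter is then the nilpotent $\GL_n(\bC)$-orbit $\bO_\pi^\vee$ with Jordan type $\mu(\pi)=(\ell_1,\ldots,\ell_r)$. Since $d^\vee$ in type $A$ is transpose of partitions, the target equality reads
\[ {}^{\bar\sk}\WF(\AZ(\pi))=\mu(\pi)^t.\]

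Next, I would describe $\AZ$ combinatorially. For $\GL(n,\sk)$ the Aubert involution coincides with the Zelevinsky duality $\pi\mapsto \pi^\vee$; its effect on multisegments is given by an explicit box-removal algorithm (M\oe glin-Waldspurger). In particular, grouping by cuspidal support, the segment-length partition of the Langlands multisegment is transposed. Hence it suffices to establish ${}^{\bar\sk}\WF(\pi')=\mu(\pi')$ for an arbitrary irreducible $\pi'$, and then to apply this identity to $\pi'=\AZ(\pi)$.

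The core of the argument is the computation of ${}^{\bar\sk}\WF(\pi)$ via Bernstein-Zelevinsky derivatives combined with a Rodier-type theorem. The harmonic-analytic input is the extension of Rodier's theorem: the coefficient $c_\CO(\pi)$ of a nilpotent orbit $\CO$ in the Harish-Chandra local character expansion equals the dimension of an appropriate degenerate Whittaker model of $\pi$ along a representative of $\CO$. The combinatorial input is the inductive computation of BZ derivatives on multisegments: the largest $k$ with $\pi^{(k)}\neq 0$ is the first part of $\WF(\pi)$, the representation $\pi^{(k)}$ is again irreducible, and its multisegment is obtained from $\fm_\pi$ by removing one box from each of the $k$ longest segments. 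Induction on $n$ applied to $\pi^{(k)}$ then shows that the partition whose successive columns record the indices of iterated highest derivatives equals $\mu(\pi)$. This gives ${}^{\bar\sk}\WF(\pi)=\mu(\pi)$ (a single orbit), and combining with the combinatorial description of $\AZ$ yields ${}^{\bar\sk}\WF(\AZ(\pi))=\mu(\pi)^t=d^\vee(\bO_\pi^\vee)$.

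The main obstacle is the harmonic-analytic step: establishing uniformly — across all cuspidal supports and all depths — that \emph{every} coefficient $c_\CO(\pi)$ in the local character expansion, not only the regular-nilpotent one originally handled by Rodier, is computed by the corresponding degenerate Whittaker model. This requires a delicate Fourier-analytic comparison of germ expansions near the identity with orbital integrals attached to nilpotent Jacobson-Morozov triples. Once this Rodier-type input is in hand, the remaining work is a finite combinatorial verification on multisegments, for which the BZ derivative formalism is perfectly tailored.
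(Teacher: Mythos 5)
The paper itself does not supply a proof of this statement; it is cited directly from M\oe glin--Waldspurger, and your strategy — reduce to the generalized Rodier theorem relating the coefficients in the local character expansion to degenerate Whittaker models, compute degenerate Whittaker models via Bernstein--Zelevinsky derivatives, and finish with multisegment combinatorics — is precisely the M\oe glin--Waldspurger strategy. You also correctly identify the hard analytic input. However, there is a genuine combinatorial confusion in the middle of your argument that would make the written proof fail as stated.

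You conflate the Langlands multisegment with the Zelevinsky multisegment. The box-removal description of the highest derivative — ``remove one box from each of the $k$ longest segments, $k$ = number of segments'' — is a statement about the \emph{Zelevinsky} (irreducible subrepresentation) parametrization $\pi=\langle\fm_Z\rangle_Z$, not about the Langlands multisegment $\fm_\pi$ that you defined in the first paragraph. Running that recursion gives, for the sequence of highest-derivative orders, exactly the transpose of the Zelevinsky segment-length partition $\mu_Z(\pi)$, so the correct intermediate statement is ${}^{\bar\sk}\WF(\pi')=\mu_Z(\pi')^t$, not ${}^{\bar\sk}\WF(\pi')=\mu(\pi')$ with $\mu$ the Langlands partition. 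These two do \emph{not} coincide in general: take $n=3$ and $\pi=Z([\rho_1,\rho_1\nu])\times\rho_2$ with $\rho_1\not\cong\rho_2$; then $\mu_L(\pi)=(1,1,1)$ while $\mu_Z(\pi)=(2,1)$, and one checks directly (parabolic induction of zero orbits from $\GL_2\times\GL_1$) that ${}^{\bar\sk}\WF(\pi)=(2,1)=\mu_Z(\pi)^t$, not $(1,1,1)$. In the same example your assertion that ``the segment-length partition of the Langlands multisegment is transposed'' under $\AZ$ also fails: $\mu_L(\AZ(\pi))=(2,1)\ne(3)=\mu_L(\pi)^t$. The correct statement about the duality is the purely set-theoretic identity $\fm_Z(\AZ(\pi))=\fm_L(\pi)$ (the involution swaps the sub and quotient parametrizations with the \emph{same} multisegment), so $\mu_Z(\AZ(\pi))=\mu_L(\pi)$ on the nose, not after a transpose.

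With that correction the argument closes: apply ${}^{\bar\sk}\WF(\pi')=\mu_Z(\pi')^t$ to $\pi'=\AZ(\pi)$ to get ${}^{\bar\sk}\WF(\AZ(\pi))=\mu_Z(\AZ(\pi))^t=\mu_L(\pi)^t=d^\vee(\bO^\vee_\pi)$, since in type $A$ the Spaltenstein dual is the transpose. Your two wrong intermediate claims happen to partially cancel in the final display, but the proof as written asserts two false statements and cannot be accepted; the fix is to route through the Zelevinsky multisegment and use $\fm_Z\circ\AZ=\fm_L$ rather than a nonexistent transposition law for the Langlands partition.
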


To exemplify Conjecture \ref{conj-main}, we will verify it for the depth-zero supercuspidal representations of classical groups, using the Langlands parametrization of Lust--Stevens \cite{LS}, and for the irreducible representations of $G_2$, using Aubert--Xu \cite{AX22} and Gan--Savin \cite{GS1,GS23}.

\begin{remark}
If {an irreducible smooth representation $\pi$} is generic, then its geometric wavefront set is the principal orbit. For the conjecture to hold, we would need $\mathbb O^\vee_\AZ=0$, so $\CO^\vee$ would need to be the open dense orbit in $\fh^\vee_q$. Moreover, 
$\CL^\vee$ would have to be the trivial local system.
\end{remark}

In the case of supercuspidal representations, Conjecture \ref{conj-main} takes a simpler, more concrete form. If $\pi$ is supercuspidal, the Langlands parameter is expected to satisfy
\begin{enumerate}
\item $\CO^\vee=\CO_{\lambda,s}^\vee$, the unique open dense $H^\vee(s)$-orbit in $\fh^\vee_q$ (this is in fact the case whenever the representation $\pi$ is tempered).
In addition,
\item $\AZ(\pi)=\pi$ (immediate from the definition of $\AZ$).
\end{enumerate}

\begin{conjecture}[Supercuspidals]\label{conj-cusp} Let $\pi(\lambda,s,\CO^\vee_{\lambda,s},\CL^\vee)$ be an irreducible supercuspidal $\bfG^\omega(\sk)$-representation in the $L$-packet $\Pi(\varphi)=\Pi(\lambda,s,\CO^\vee_{\lambda,s})$. Set $\bO^\vee_{\varphi}=G^\vee\cdot \CO^\vee_{\lambda,s}$. Then
\begin{enumerate}
\item \[ {}^{\bar \sk}\WF(\pi)\le d^\vee(\bO^\vee_{\varphi}).
\]
Equivalently, $\bO^\vee_{\varphi}\le d({}^{\bar \sk}\WF(\pi))$. 
\item If in addition, $\bfG^\omega(\sk)$ is split and $\pi$ is depth-zero compactly induced from a cuspidal representation of the maximal hyperspecial parahoric subgroup, then 
\[ {}^{\bar \sk}\WF(\pi)=\{d^\vee(\bO^\vee_{\varphi})\}.
\]
\end{enumerate}
\end{conjecture}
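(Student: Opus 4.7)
The plan is to verify the conjecture separately in each of the two settings announced after its statement: depth-zero supercuspidals of classical groups via the Langlands parametrization of Lust--Stevens, and of $G_2$ via Aubert--Xu. In both cases the strategy is to compute the two sides of the bound explicitly and compare. On the dual side, given the parameter data $(\lambda, s)$ with open dense orbit $\CO^\vee_s \subset \fh^\vee_q$ (whose existence is the tempered/supercuspidal desideratum), the nilpotent class $\bO^\vee_s = G^\vee\cdot\CO^\vee_s$ can be read off by picking any representative $X \in \CO^\vee_s$, noting that $X$ is automatically nilpotent in $\fg^\vee$ because it lies in a nontrivial eigenspace of $\Ad(s)$, and applying a graded Jacobson--Morozov argument to extract the associated weighted Dynkin diagram.

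On the $p$-adic side, the depth-zero hypothesis means $\pi$ is compactly induced from a cuspidal representation $\widetilde\sigma$ of a parahoric stabilizer $\widetilde{\mathcal P}$ lifting a cuspidal representation $\sigma$ of the reductive quotient $\mathsf G_x(\bF_q)$ at a vertex $x$ of the Bruhat--Tits building. The unramified wavefront set ${}^K\!\WF(\pi)$ is then determined by the Kawanaka unipotent support of $\sigma$ via the explicit recipe of \cite{okada-WF}, and the geometric wavefront set ${}^{\bar\sk}\WF(\pi)$ is obtained by saturating the resulting $K$-orbits to $\bar\sk$-orbits in $\fg(\bar\sk)$. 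Lusztig's theory of unipotent support for cuspidal characters of finite reductive groups, applied to $\sigma$, pins down an explicit nilpotent orbit realizing these wavefront data.

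The heart of the argument is then a combinatorial matching: the orbit extracted from $\sigma$ on the $p$-adic side should coincide with $d^\vee(\bO^\vee_s)$ in the hyperspecial split case, and should lie in its closure in general. For classical groups this reduces to a partition identity; the Lust--Stevens recipe decomposes $\pi$ into endoscopic pieces indexed by self-dual Jordan-block data that transparently yield the partition of $\bO^\vee_s$, after which the partition formula for Spaltenstein's $d^\vee$ matches the wavefront partition output by Lusztig--Kawanaka theory. For $G_2$ the Aubert--Xu tables list a small finite set of supercuspidal L-packets, and the comparison is case by case. The main obstacle is part (2), the equality for hyperspecial split parahorics, where we must rule out any proper sub-orbit appearing in ${}^{\bar\sk}\WF(\pi)$. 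The plan is to combine Lusztig's sharp equality for the Kawanaka wavefront of cuspidal characters of split finite reductive groups (where the wavefront is exactly the Spaltenstein dual of the Springer-attached nilpotent orbit) with the observation that Okada's lifting from $\bF_q$-orbits to $K$-orbits is an equality for hyperspecial parahorics, to propagate sharpness from $\bF_q$-geometry up to the full $\bar\sk$-geometric wavefront.
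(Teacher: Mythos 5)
Your plan matches the paper's verification strategy: compute $\bO^\vee_\pi$ from the Lust--Stevens (resp.\ Aubert--Xu) parametrization, compute ${}^{\bar\sk}\WF(\pi)$ via Theorem~\ref{t:supercuspidal-WF} together with Lusztig's formula $\WF(\sigma)=d_S^\vee(\tau u_0^\vee)$ for the Kawanaka wavefront of a finite cuspidal, and compare the resulting partitions through $d^\vee$. Two small corrections of emphasis: the finite wavefront set is given by Lusztig--Sommers duality $d_S^\vee$ applied to the \emph{mixed} class $\tau u_0^\vee$ (not the Spaltenstein dual of a Springer-attached unipotent, since $\tau\ne 1$ in general), and the concrete technical work --- the $j$-induction computations of \cite{lusztig-j-ind} converting the Kawanaka side into an explicit partition, and the identities for $\sqcup$, $+$, and transpose --- is what makes the ``combinatorial matching'' go through; the equality in part~(2) then falls out by inspection in the $n_2=0$ case rather than by a separate sharpness-propagation step, which is not needed because ${}^K\underline\WF(\pi)=(\WF(\sigma),x)$ is already an equality for every maximal parahoric.
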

The first part of the conjecture is just a particular case of Conjecture \ref{conj-main}.

\begin{remark}
    Notice that Conjecture \ref{conj-cusp} is vacuously true for the nonsingular supercuspidal representations, in the sense of Kaletha \cite{Kal}, because $\bO^\vee_\varphi=0$ so $d^\vee(\bO^\vee_\varphi)$ is the principal geometric orbit. In this case, the upper bound is attained only for the generic constituents of the $L$-packets. In section \ref{s:DBR}, we will compute the wavefront sets of the regular depth-zero supercuspidals parameterized by Kazhdan-Varshavsky \cite{KV} and DeBacker-Reeder \cite{DBR}.
\end{remark}

In section \ref{s:classical}, we will prove
\begin{theorem}[Depth-zero supercuspidals of classical groups] Suppose $\bfG(\sk)$ is a symplectic, special orthogonal, or unitary group. Conjecture \ref{conj-cusp} holds for all depth-zero supercuspidal representations, using the Langlands parametrization of Lust--Stevens \cite{LS}.
\end{theorem}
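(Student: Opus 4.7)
The plan is to combine the explicit Lust--Stevens description of the enhanced Langlands parameter of a depth-zero supercuspidal with a direct computation of the wavefront set via Kawanaka's theory for finite classical groups, and then to match the two sides of Conjecture \ref{conj-cusp} through a combinatorial identity for the Spaltenstein and Lusztig--Sommers dualities.

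First I would unpack the parameter side. By \cite{LS}, every depth-zero supercuspidal $\pi$ of a symplectic, special orthogonal, or unitary group is compactly induced from a cuspidal representation $\rho$ of a maximal parahoric $K$, inflated from a cuspidal representation $\bar\rho$ of the finite reductive quotient $\bar K$, which is a product of smaller classical groups over $\bF_q$ or $\bF_{q^2}$. The inertial restriction $\lambda=\varphi|_{I_\sk}$ and the semisimple element $s=\varphi(\Fr)$ encode the Jordan-block/endoscopic data of the Deligne--Lusztig parameter of $\bar\rho$, and $H^\vee(s)$ is, up to components, the complex dual group of $\bar K$. Since $\pi$ is tempered, $\CO_s^\vee$ is the unique dense $H^\vee(s)$-orbit in $\fh^\vee_q$, and from the explicit recipe I would identify the saturation $\bO_s^\vee=G^\vee\cdot\CO_s^\vee$ as the Lusztig--Spaltenstein induction of the principal nilpotent from the Jordan-block Levi into $\fg^\vee$.

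Separately, I would compute the wavefront set by passing to $\bar K$. For depth-zero supercuspidals, the unramified wavefront set ${}^K\!\WF(\pi)$ is determined by the restriction of $\bar\rho$ to the unipotent set of $\bar K$, via the Moy--Prasad/DeBacker asymptotic expansion combined with \cite{okada-WF}; for cuspidal representations of finite classical groups, the corresponding Kawanaka orbit is the unique special orbit in the Lusztig family of $\bar\rho$, and it admits an explicit partition/symbol description.

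The two sides are then matched by a combinatorial identity in the Weyl group of a classical root system: one must check that $d^\vee$ applied to the induced-from-principal orbit obtained in the second paragraph bounds (and in the hyperspecial split case equals) the Kawanaka orbit obtained in the third. I expect this to follow from the compatibility of $d$, $d^\vee$, and $d_S$ with Lusztig--Spaltenstein induction, together with the symbol/partition description of Lusztig families. The main obstacle is the inequality (1) for $\pi$ induced from a non-hyperspecial parahoric: then $\bar K$ has several classical factors, $\bO_s^\vee$ is an induced but non-principal orbit, and one must carefully track Lusztig families under $d_S$ across the Jordan decomposition in order to keep $\bO_s^\vee \le d({}^{\bar\sk}\WF(\pi))$ intact. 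In the split hyperspecial case of (2), by contrast, $\bar K=\bfG(\bF_q)$ is a single classical group, and the comparison should reduce to the involutivity $d^\vee d=\Id$ on special orbits applied to the Kawanaka orbit of $\bar\rho$, yielding the required equality.
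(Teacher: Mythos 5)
Your plan correctly identifies the two inputs the paper also uses: the explicit Lust--Stevens partition for the nilpotent Langlands parameter $\bO^\vee_\pi$, and the Kawanaka wavefront set of the inducing cuspidal $\bar\rho$ computed via Lusztig's theorem on unipotent support (Theorem 3.2 in the paper) together with the depth-zero bridge ${}^K\!\underline\WF(\pi(x,\sigma))=(\WF(\sigma),x)$. So the high-level strategy is aligned. However, there are two places where the proposal stops short of a proof, and they are precisely where the content of the theorem lies.

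First, your characterization of $\bO^\vee_s$ as ``Lusztig--Spaltenstein induction of the principal nilpotent from the Jordan-block Levi'' is an unverified claim that the paper neither states nor needs. The paper works with the Lust--Stevens partition directly (the string recipe of \S4.1), and the relevant structure is not a Levi in $G^\vee$ but the graded piece $\fh^\vee_q$ of a pseudo-Levi $\fh^\vee={\fg^\vee}^{\lambda(I_\sk)}$; the saturation of the dense orbit there is not obviously an induced-from-principal orbit, and basing the comparison on this would require a separate argument. Likewise, saying the Kawanaka orbit of a cuspidal $\bar\rho$ is ``the unique special orbit in the Lusztig family of $\bar\rho$'' is too loose for the non-unipotent case: what one actually has, and what the paper uses, is $\WF(\bar\rho)=d_S^\vee(\tau u_0^\vee)$, involving the semisimple $\tau$ as well as the unipotent $u_0^\vee$ in its centralizer; this is then translated into a concrete partition by a $j$-induction computation (Lemma 4.2 and the propositions), which is not automatic.

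Second, and more seriously, you explicitly flag the non-hyperspecial case as ``the main obstacle'' and propose to resolve it by ``compatibility of $d$, $d^\vee$, $d_S$ with Lusztig--Spaltenstein induction''. But no such general compatibility does the job here; the inequality in the paper comes from a concrete partition identity. Once both sides are written as partitions, the Langlands side is $d^\vee(\bO^\vee_\pi)=\sum_P\bigl(\sqcup_{i=1,2}\,\mu_P^{(i)}\bigr)$ while the wavefront side is ${}^{\bar\sk}\WF(\pi)=\sqcup_{i=1,2}\bigl(\sum_P\mu_P^{(i)}\bigr)$, using the compatibility $(\alpha\sqcup\beta)^t=\alpha^t+\beta^t$; the inequality is the elementary fact that for partitions $\alpha\sqcup\beta\le\alpha+\beta$ in the dominance order, applied after transposition. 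The two parahoric factors $n_1,n_2$ are exactly what produces the $\sqcup$ versus $+$ discrepancy, and without isolating this identity the bound in Conjecture \ref{conj-cusp}(1) does not follow. For part (2), the paper obtains equality by setting $n_2=0$ and checking that the two formulae coincide; your appeal to $d^\vee\circ d=\Id$ on special orbits is a reasonable heuristic but does not address the need to identify the Kawanaka orbit and the Lust--Stevens partition on the nose, which is what the explicit computation provides. So the proposal is a correct outline of ingredients, but the decisive combinatorial comparison is missing.
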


We mention that Conjectures \ref{conj-main} and \ref{conj-cusp} are compatible with a conjecture about Arthur packets formulated by D. Jiang. Suppose $\varphi$ is a tempered $L$-parameter. Then $\AZ(\Pi(\varphi))$ is expected to be an Arthur packet. Let $\pi'\in\AZ(\Pi(\varphi))$ be one of these ``anti-tempered'' representations. Then $\pi'=\AZ(\pi(\lambda,s,\CO^\vee_{\lambda,s},\CL^\vee))$ for some local system $\CL^\vee$. Conjecture \ref{conj-main} then says that we should have
\begin{equation}
{}^{\bar \sk}\WF(\pi')\le d^\vee(\mathbb O^\vee_\varphi).
\end{equation}
The orbit $\CO^\vee_{\varphi}$ is precisely the ``Arthur $SL(2)$'' $\bO^\vee_{A,\varphi}$ attached to the Arthur packet $\AZ(\Pi(\varphi))$. In other words, the expectation is
\begin{equation}
{}^{\bar \sk}\WF(\pi')\le d^\vee(\bO^\vee_{A,\varphi}),
\end{equation}
for all $\pi'\in \AZ(\Pi(\varphi))$. This is known to hold (with equality) if $\varphi$ is a ``real'' unramified $L$-parameter, see \cite{CMBO-arthur}. Jiang's conjecture \cite[Conjecture 4.2]{Ji} says that this inequality holds for {\it all} Arthur packets, when they are defined (not just the above anti-tempered packets) and moreover that there exists at least one constituent of the Arthur packet where the equality is achieved. One can naturally reformulate Conjecture \ref{conj-main} in this case as follows:

\begin{conjecture}\label{conj-main-2}
Let $\pi=\pi(\lambda,s,\CO^\vee_{\lambda,s},\CL^\vee)$ be an irreducible representation in the tempered $L$-packet $\Pi(\varphi)=\Pi(\lambda,s,\CO^\vee_{\lambda,s})$. Then
\begin{enumerate}
\item  $ {}^{\bar \sk}\WF(\AZ(\pi))\le d^\vee(\mathbb O^\vee_\varphi).$ 
\item Equality in (1) is achieved for some $\pi$ in the $L$-packet.
\end{enumerate}
\end{conjecture}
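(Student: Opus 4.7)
Part (1) is a direct instance of Conjecture \ref{conj-main}, since a tempered L-parameter is characterized by $\CO^\vee$ being open dense in $\fh^\vee_q$. The substantive content is part (2), the existence of a constituent at which the bound is attained. The plan is to isolate this constituent geometrically, reduce to strata where equality is already available, and propagate via standard compatibilities.

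First, to identify the candidate, observe that because $\CO^\vee$ is open dense in $\fh^\vee_q$, Pyasetsky's pairing sends $\CO^\vee$ to $\{0\}\subset\fh^\vee_{q^{-1}}$: the closure of the conormal to the dense orbit is $\fh^\vee_q\times\{0\}$, and this is matched on the other side only by the conormal to the zero orbit. Through the Evens-Mirkovic correspondence (\ref{e:AZ-geom}) one expects a distinguished equivariant local system $\CL^\vee_\circ$ on $\CO^\vee$ whose geometric Fourier transform $\FT(\mathsf{IC}(\CO^\vee,\CL^\vee_\circ))$ is supported at $0\in\fh^\vee_{q^{-1}}$. The corresponding $\pi_\circ:=\pi(\lambda,s,\CO^\vee,\CL^\vee_\circ)$ plays the role of the ``spherical'' constituent of the anti-tempered Arthur packet $\AZ(\Pi(\varphi))$, and is the proposed maximizer.

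Next, to verify ${}^{\bar \sk}\WF(\AZ(\pi_\circ))=d^\vee(G^\vee\cdot\CO^\vee)$, I would assemble the equality from known strata. For $\GL(n)$ this is M\oe glin-Waldspurger; for Lusztig-unipotent parameters with real infinitesimal character it is \cite{CMBO-arthur}; for depth-zero supercuspidals it will be established in section \ref{s:classical} and section \ref{s:G2} of the present paper. For the general case the strategy is a Kaletha-style reduction: transfer the wavefront computation along endoscopic and parahoric descent to unipotent L-packets on smaller reductive quotients, and then combine with the compatibility of $\WF$ with parabolic induction (the top induced orbit being the induced nilpotent orbit) to recover the equality for $\pi_\circ$.

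The principal obstacles are twofold. First, the conjecture presupposes not only the local Langlands correspondence but also the expected geometric compatibility of $\AZ$ with $\FT$; outside the Iwahori-fixed setting of \cite{evens-mirkovic} this compatibility is conjectural, and its verification -- likely via Kaletha's regular supercuspidal machine coupled with a Deligne-Lusztig-style geometric construction -- is itself a substantial program. Second, even granting LLC, the identification of $\CL^\vee_\circ$ when $H^\vee(s)$ is disconnected and cuspidal local systems appear is delicate: the behavior of the Fourier transform on non-trivial local systems (as illustrated by the $Sp(4)$ phenomenon in \cite[Remark 7.5]{evens-mirkovic}) and its interaction with Lusztig's canonical quotient $\bar A(\CO)$ is, I expect, the true heart of the difficulty.
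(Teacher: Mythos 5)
This statement is a conjecture, not a theorem: the paper offers no proof, and you should not expect one to exist. What the paper does is (i) show in Section~\ref{sec:induction} that Conjecture~\ref{conj-main-2}(1) \emph{implies} Conjecture~\ref{conj-main} via Lusztig--Spaltenstein induction and compatibility of $\AZ$ with parabolic induction, and (ii) verify the conjecture case-by-case for depth-zero supercuspidals of classical groups (Section~\ref{s:classical}) and for $G_2$ (Section~\ref{s:G2}), leaning on the explicit Langlands parametrizations of Lust--Stevens, Aubert--Xu, and Gan--Savin together with the wavefront-set computations of \cite{CMBO-cusp,JLS,LoSa}. There is no general argument to compare your proposal against.

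That said, two substantive comments on your outline. Your opening reduction --- ``Part (1) is a direct instance of Conjecture~\ref{conj-main}'' --- is logically correct as a containment of statements, but it is circular as a proof, since Conjecture~\ref{conj-main} is itself open; moreover, the paper's logical flow runs the other way (it is Conjecture~\ref{conj-main-2}(1) that is taken as the basic input from which Conjecture~\ref{conj-main} is derived). Your identification of the candidate maximizer is, on the other hand, a reasonable heuristic and matches the paper's discussion: the conormal computation $P(\CO^\vee_{\text{dense}})=\{0\}$ is correct, the trivial local system on the dense orbit has $\FT(\mathsf{IC})$ a skyscraper at the origin, and this is the geometric shadow of the expectation in Remark~1.11(b) that every nonsingular tempered $L$-packet contains an anti-generic member. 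But you correctly flag (and the paper agrees, cf.\ the $Sp(4)$ caveat in \cite[Remark 7.5]{evens-mirkovic}) that the Fourier transform need not match Pyasetsky duality on nontrivial local systems, so identifying $\CL^\vee_\circ$ for disconnected $H^\vee(s)$ is genuinely open; and the ``Kaletha-style reduction'' you invoke for the general case does not currently exist in the literature and is not a step one can take. In short: you have produced a research outline rather than a proof, the conjecture remains a conjecture, and the paper never claims otherwise.
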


In section \ref{sec:induction}, we will explain that Conjecture \ref{conj-main-2}(1) (via the usual desiderata of Langlands correspondence) implies Conjecture \ref{conj-main}.

In section \ref{s:G2}, we will verify Conjecture 1.9 for $G_2$, using the Langlands correspondences from \cite{AX22,GS1,GS23} and the wavefront set results of \cite{JLS,LoSa}:

\begin{theorem}
    Conjectures 1.1 and 1.9 hold for $G_2$.
\end{theorem}

\begin{remark}\label{r:extreme} We look at the two extreme cases of Conjectures \ref{conj-main} and \ref{conj-main-2}. 
\begin{enumerate}
    \item[(a)] 
Firstly, let us call an irreducible representation $\pi$ \emph{anti-generic} if $\AZ(\pi)$ is generic. If $\pi$ is anti-generic, then $ {}^{\bar \sk}\WF(\AZ(\pi))=\mathcal O_{\textup{reg}}$, the regular nilpotent orbit. The only way that the upper bound conjecture could hold then is if the Langlands nilpotent parameter of $\pi$ is $0$. In other words, for every anti-generic representation  the image of $\mathbb C\subset W_\sk'$ under the Langlands parameter is trivial. 

If $\pi$ is unipotent anti-generic, then it is known that it is spherical (with respect to the maximal special parahoric) and the desired condition on the Langlands parameter holds. 
If $\pi$ is supercuspidal, since $\AZ(\pi)=\pi$, the conjecture implies that every generic supercuspidal representation has trivial nilpotent Langlands parameter.

For classical groups, the algorithm for computing the $\AZ$-dual in \cite{AM} implies that the Langlands nilpotent parameter of an anti-generic representation is indeed trivial.
\smallskip

\item[(b)]
Secondly, suppose $\mathcal O^\vee=0$, so $d^\vee(G^\vee\cdot \mathcal O^\vee)=\mathcal O_{\textup{reg}}.$ Then Conjecture \ref{conj-main-2}(1) is trivially satisfied, but (2) says that there should exist a representation $\pi$ in the tempered $L$-packet such that ${}^{\bar \sk}\WF(\AZ(\pi))=\mathcal O_{\textup{reg}}$. In other words, every tempered $L$-packet with trivial Langlands nilpotent parameter should contain an anti-generic representation.

This is equivalent with a conjecture of Shahidi as follows. If $\varphi\leftrightarrow (\lambda,s,\CO^\vee_{\lambda,s})$ is a tempered Langlands parameter, since $\CO^\vee_{\lambda,s}$ must be dense in $\fh^\vee_q$, we can only have $\CO^\vee_{\lambda,s}=0$ if $\fh^\vee_q=0$. But then $\varphi$ is the only $L$-parameter for the pair $(\lambda,s)$ and since it is expected that $\AZ$ preserves the part $(\lambda,s)$ of the Langlands parameter, it follows that
\[\AZ(\Pi(\varphi))=\Pi(\varphi).
\]
\end{enumerate}
\end{remark}

\section{\bf Parabolic induction}\label{sec:induction}

In this section, we discuss the compatibility of our conjectures with parabolic induction. Let $\mathbf P=\mathbf M\mathbf N$ be a $\mathsf k$-rational parabolic subgroup of $\bfG$ with a $\mathsf k$-rational Levi subgroup $M=\mathbf M(\sk)$. 

If $\pi=i_{\mathbf P(\sk)}^{\bfG(\sk)}(\pi_M)$ is a parabolically induced representation, where $\pi_M$ is an admissible representation of $M$, then the relation between the wavefront sets is well known, see Waldspurger \cite{Wa}:
\begin{equation}\label{e:ind-WF}
{}^{\bar \sk}\WF(\pi)=\ind_{\mathbf M}^{\bfG}({}^{\bar \sk}\WF(\pi_M)),
\end{equation}
where $\ind_{\mathbf M}^{\bfG}$ denotes the Lusztig--Spaltenstein induction of nilpotent orbits. 

We need to use several known facts about the induction of nilpotent orbits.

\begin{lemma}\label{l:prop-nil}
\begin{enumerate}
    \item If $\mathcal O_1\le \mathcal O_2$ are two nilpotent $\mathbf M$-orbits in $\mathfrak m$, then $\ind_{\mathbf M}^{\mathbf G}(\mathcal O_1)\le \ind_{\mathbf M}^{\mathbf G}(\mathcal O_2).$
    \item If $\mathcal O_1^\vee\le \mathcal O_2^\vee$ are two nilpotent $G^\vee$-orbits in $\mathfrak g^\vee$, then $d^\vee(\mathcal O_2^\vee)\le d^\vee(\mathcal O_1^\vee)$.
    \item If $\mathcal O^\vee$ is a nilpotent $M^\vee$-orbits in $\mathfrak m^\vee$, then $d^\vee(G^\vee\cdot \mathcal O^\vee)=\ind_{\mathbf M}^{\mathbf G}(d^\vee(\mathcal O^\vee)).$
\end{enumerate}
\end{lemma}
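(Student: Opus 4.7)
The three parts are classical facts about nilpotent orbits in complex reductive Lie algebras, and the plan is to verify (1) directly from the definition and to invoke Spaltenstein's theorems for (2) and (3).

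For (1), I would fix any parabolic subalgebra $\mathfrak p=\mathfrak m\oplus\mathfrak n$ with Levi factor $\mathfrak m$. By definition, $\ind_{\mathbf M}^{\mathbf G}(\mathcal O)$ is the unique open dense $\mathbf G$-orbit in the irreducible subvariety $\mathbf G\cdot(\mathcal O+\mathfrak n)$ of the nilpotent cone of $\mathfrak g$, and this orbit does not depend on the choice of $\mathfrak p$. If $\mathcal O_1\subseteq\overline{\mathcal O_2}$, then $\mathcal O_1+\mathfrak n\subseteq\overline{\mathcal O_2+\mathfrak n}$. Since $\mathbf G$ is connected, the identity $\overline{\mathbf G\cdot\overline{\mathcal O_2+\mathfrak n}}=\overline{\mathbf G\cdot(\mathcal O_2+\mathfrak n)}$ holds (the $\mathbf G$-saturation of a closed $\mathbf G$-invariant set is still closed, and the closure of the image is the image of the closure up to $\mathbf G$-saturation), giving $\overline{\mathbf G\cdot(\mathcal O_1+\mathfrak n)}\subseteq\overline{\mathbf G\cdot(\mathcal O_2+\mathfrak n)}$. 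Comparing the open dense orbits on each side yields $\ind_{\mathbf M}^{\mathbf G}(\mathcal O_1)\le\ind_{\mathbf M}^{\mathbf G}(\mathcal O_2)$.

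For (2), the fact that Spaltenstein duality is order-reversing on nilpotent orbits is the classical theorem of Spaltenstein \cite{spaltenstein}; I would invoke it directly.

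For (3), this is the compatibility of Spaltenstein duality with saturation on the $G^\vee$-side and with Lusztig--Spaltenstein induction on the $G$-side, also essentially due to Spaltenstein \cite{spaltenstein}. A conceptual argument proceeds via the characterization of $d^\vee$ through Richardson induction from distinguished orbits in Levi subalgebras, combined with transitivity of induction $\ind_{\mathbf L}^{\mathbf G}=\ind_{\mathbf M}^{\mathbf G}\circ\ind_{\mathbf L}^{\mathbf M}$. Alternatively, one can verify (3) by the explicit combinatorial recipes for induction and duality in terms of partitions in classical types together with Bala--Carter bookkeeping in the exceptional types. The main obstacle, in a self-contained treatment, is (3): although it packages standard ingredients, it ties together several structural features of Spaltenstein duality that are not immediate from its definition, so in practice the plan is to cite it from the literature rather than re-prove it from scratch.
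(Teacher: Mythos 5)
Your proposal is correct and matches the paper's approach: a direct argument from the definition (or the formula $\overline{\ind_{\mathbf M}^{\mathbf G}(\mathcal O)}=\mathbf G\cdot(\overline{\mathcal O}+\mathfrak n)$) for part (1), and citation of the literature for parts (2) and (3). The paper cites \cite[Proposition A2 (a),(c)]{BV} rather than Spaltenstein directly for (2) and (3), but these sources are equivalent for the statements needed.
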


\begin{proof}
Claim (1) is immediate from the definition, or for example from the formula $\overline{\ind_{\mathbf M}^{\mathbf G}(\mathcal O)}=\mathbf G\cdot (\overline{\mathcal O}+\mathfrak n)$, see \cite[proof of Theorem 7.1.3]{CM}.

Claims (2) and (3) are in \cite[Proposition A2 (a),(c)]{BV}.
\end{proof}

\begin{proposition}
    Conjecture \ref{conj-main-2}(1) implies Conjecture \ref{conj-main}.
\end{proposition}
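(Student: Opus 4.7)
The plan is to reduce the general case of Conjecture \ref{conj-main} to the tempered case of Conjecture \ref{conj-main-2}(1) on a standard Levi subgroup, using the Langlands classification together with Waldspurger's formula \eqref{e:ind-WF} and Lemma \ref{l:prop-nil}.

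First, I would apply the Langlands classification to realize $\pi=\pi(\lambda,s,\CO^\vee,\CL^\vee)$ as the unique irreducible quotient of a standard module $\Ind_{\mathbf P(\sk)}^{\bfG(\sk)}(\pi_M^\circ\otimes\nu)$, where $\mathbf P=\mathbf M\mathbf N$ is a standard parabolic, $\pi_M^\circ$ is an irreducible tempered representation of $\mathbf M(\sk)$ with tempered Langlands parameter $\varphi_M=(\lambda_M,s_M,\CO_M^\vee,\CL_M^\vee)$ (so that $\CO_M^\vee$ is dense in $\fh_{M,q}^\vee$), and $\nu$ is a positive unramified character. Since the twist $\nu$ takes values in $Z(M^\vee)$, the nilpotent Langlands parameter attached to $\varphi$ coincides with that attached to $\varphi_M$; in particular $G^\vee\cdot\CO^\vee=G^\vee\cdot\CO_M^\vee$. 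Second, since the Aubert-Zelevinsky involution commutes (up to sign) with parabolic induction in the Grothendieck group, $\AZ(\pi)$ occurs as a Jordan-Hölder constituent of $\Ind_{\mathbf P(\sk)}^{\bfG(\sk)}(\AZ_M(\pi_M^\circ\otimes\nu))$. Third, Conjecture \ref{conj-main-2}(1) applied on $\mathbf M(\sk)$ to the tempered representation $\pi_M^\circ\otimes\nu$ gives
\[
{}^{\bar\sk}\WF(\AZ_M(\pi_M^\circ\otimes\nu))\;\le\; d^\vee(M^\vee\cdot\CO_M^\vee).
\]
Fourth, Waldspurger's formula \eqref{e:ind-WF}, together with the monotonicity of $\ind_{\mathbf M}^{\bfG}$ (Lemma \ref{l:prop-nil}(1)) and the identity of Lemma \ref{l:prop-nil}(3), produces the chain
\[
{}^{\bar\sk}\WF(\AZ(\pi))\;\le\;\ind_{\mathbf M}^{\bfG}\bigl({}^{\bar\sk}\WF(\AZ_M(\pi_M^\circ\otimes\nu))\bigr)\;\le\;\ind_{\mathbf M}^{\bfG}\bigl(d^\vee(M^\vee\cdot\CO_M^\vee)\bigr)\;=\;d^\vee(G^\vee\cdot\CO^\vee),
\]
which is exactly Conjecture \ref{conj-main}.

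The main obstacle I expect is the very first inequality of the last display. Waldspurger's formula computes the wavefront set of the \emph{full} parabolic induction, whereas $\AZ(\pi)$ is only a Jordan-Hölder constituent; passing to it requires ruling out cancellation among the remaining constituents in the Howe--Harish-Chandra local character expansion. The cleanest way I see is to exploit the fact that $\AZ(\pi)$ is the unique irreducible submodule of $\Ind_{\mathbf P(\sk)}^{\bfG(\sk)}(\AZ_M(\pi_M^\circ\otimes\nu))$ (the Aubert-dual of the Langlands classification), so that the other constituents come from strictly smaller Langlands data and can be absorbed by an induction on the length of the standard module using the positivity of the leading HCH coefficients on the wavefront set. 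A secondary subtlety is to articulate the identification $G^\vee\cdot\CO^\vee=G^\vee\cdot\CO_M^\vee$ cleanly within the conjectural Langlands correspondence assumed throughout.
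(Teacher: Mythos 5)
Your overall plan matches the paper's proof almost exactly: Langlands classification, commutation of $\AZ$ with parabolic induction, Waldspurger's formula \eqref{e:ind-WF}, and Lemma \ref{l:prop-nil}(1),(3). Two points deserve comment.

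First, you apply Conjecture \ref{conj-main-2}(1) to $\pi_M^\circ\otimes\nu$, but that representation is only essentially tempered, not tempered, so the conjecture does not literally apply to it. The paper instead observes that the geometric wavefront set is insensitive to unramified twists (the local character expansion lives near compact elements, where $\nu$ is trivial), so that
\[
{}^{\bar\sk}\WF\bigl(i_{\mathbf P(\sk)}^{\bfG(\sk)}(\AZ(\sigma)_\nu)\bigr)={}^{\bar\sk}\WF\bigl(i_{\mathbf P(\sk)}^{\bfG(\sk)}(\AZ(\sigma))\bigr),
\]
and then applies the tempered conjecture to $\sigma$ alone. You should make the same move rather than apply the conjecture to the twist.

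Second, the ``obstacle'' you flag — that $\AZ(\pi)$ is only a constituent of the induced representation, so passing from the full induction to $\AZ(\pi)$ might be problematic — is resolved by a standard fact, not by the induction-on-length argument you sketch. The leading coefficients in the Howe--Harish-Chandra expansion of an irreducible representation are non-negative (they are dimensions of degenerate Whittaker models, by Mœglin--Waldspurger), so the coefficients are additive without cancellation over Jordan--Hölder factors, and ${}^{\bar\sk}\WF$ of any subquotient is $\le{}^{\bar\sk}\WF$ of the full representation. The paper invokes this simply as ``by the definition of the wavefront set,'' which is appropriate. Your proposed workaround via uniqueness of the irreducible submodule of the dual standard module is unnecessary, and it is doubtful it would close cleanly without ultimately invoking the same non-negativity.
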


\begin{proof}
We need to prove that if 
\[{}^{\bar \sk}\WF (\AZ(\pi))\le d^\vee(\bO^\vee_\pi)
\]
holds for all irreducible tempered representations of reductive groups, then it holds for all irreducible representations. Here $\bO^\vee_\pi$ denotes the nilpotent $G^\vee$-orbit of the Langlands parameter of $\pi$. By the parabolic part of the Langlands classification, every irreducible $\pi$ is a quotient of a standard module $i(\mathbf P,\sigma_\nu)=i_{\mathbf P(\sk)}^{\mathbf G(\sk)}(\sigma_\nu)$, $\sigma_\nu=\sigma\otimes\nu$, where $\sigma$ is an irreducible tempered $\mathbf M(\sk)$-representation and $\nu$ is an unramified character of $\mathbf M(\sk)$. 

Since the involution $\AZ$ commutes with parabolic induction \cite[Theorem 1.7]{Au-inv}, i.e., in the Grothendieck group 
\[
\AZ(i_{\mathbf P(\sk)}^{\mathbf G(\sk)}(\sigma_\nu))=i_{\mathbf P(\sk)}^{\mathbf G(\sk)}(\AZ(\sigma)_\nu),
\]
it follows that $\AZ(\pi)$ is a subquotient of $i_{\mathbf P(\sk)}^{\mathbf G(\sk)}(\AZ(\sigma)_\nu)$. By the definition of the wavefront set then
\[
{}^{\bar \sk}\WF (\AZ(\pi))\le {}^{\bar \sk}\WF (i_{\mathbf P(\sk)}^{\mathbf G(\sk)}(\AZ(\sigma)_\nu)={}^{\bar \sk}\WF(i_{\mathbf P(\sk)}^{\mathbf G(\sk)}(\AZ(\sigma))),
\]
where the equality comes from the fact that the local character expansion is valid on neighborhoods of compact elements, and so it does not see twists by unramified characters. By (\ref{e:ind-WF}), it follows that
\[
{}^{\bar \sk}\WF (\AZ(\pi))\le \ind_{\mathbf M}^{\bfG}({}^{\bar \sk}\WF(\AZ(\sigma))).
\]
Now assuming Conjecture \ref{conj-main-2} for tempered representations, we have ${}^{\bar \sk}\WF(\AZ(\sigma))\le d^\vee_{M^\vee}(\bO^\vee_\sigma)$. Then Lemma \ref{l:prop-nil}(1),(3) implies
\[
{}^{\bar \sk}\WF (\AZ(\pi))\le \ind_{\mathbf M}^{\bfG}(d^\vee_{M^\vee}(\bO^\vee_\sigma))=d^\vee(G^\vee\cdot \bO^\vee_\sigma).
\]
Recall that $\pi$ is the Langlands quotient of $i(\mathbf P,\sigma_\nu)$, hence by the desiderata of the Langlands correspondence, $\bO^\vee_\pi=G^\vee\cdot \bO^\vee_\sigma.$
\end{proof}

\section{\bf Depth-zero supercuspidal representations}

Let $\pi$ be a depth-zero supercuspidal representation of $\rG^\omega:=\bfG^\omega(\sk)$. This is realized as follows: there are a vertex $x$ in the Bruhat-Tits building of $\rG^\omega$, thus a maximal parahoric subgroup $\rG^\omega_{x,0}$ and the stabilizer $\rG^\omega_x$ of $x$, a cuspidal representation $\sigma$ of the finite reductive quotient $\C G^\omega_x$ and an irreducible extension $\widetilde\sigma$ of $\sigma$ to $\rG_x^\omega$
such that $\pi\simeq\textrm{c-Ind}_{\rG^\omega_x}^{\rG^\omega}(\widetilde\sigma)$. Write $\pi=\pi(x,\widetilde \sigma)$. Note that $\WF(\pi)$ is independent of the extension $\widetilde\sigma$.

\ 

The finite analogue of the wavefront set for $\sigma$ is the {\it Kawanaka wavefront} set $\WF_q(\sigma)$, \cite{Kaw}. 

\begin{theorem}[{\cite[Theorem 3.0.4]{CMBO-cusp}}]\label{t:supercuspidal-WF} The canonical unramified wavefront set of $\pi(x,\widetilde\sigma)$ is 
\[{}^K\!\underline\WF(\pi(x,\widetilde\sigma))=(\WF(\sigma), x).
\]
\end{theorem}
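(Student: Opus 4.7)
The plan is to combine the Frobenius-type formula for the character of a compactly induced representation with the finite-group character expansion, and then transport the resulting information from the reductive quotient at $x$ to the $p$-adic nilpotent cone via DeBacker's parametrization. Concretely, I would first compute the Harish-Chandra character of $\pi(x,\sigma)=\textrm{c-Ind}_{\bfG^\omega_x(\mathfrak o)}^{\bfG^\omega(\sk)}(\widetilde\sigma)$ on a small neighborhood of the identity inside the parahoric. For $Y$ in the Moy--Prasad lattice $\mathfrak g^\omega_{x,0^+}$, the sum-over-cosets formula for the character of a compactly induced representation collapses: the cuspidality of $\sigma$ together with a standard building-theoretic argument forces the off-parahoric contributions to vanish, and one obtains that $\Theta_\pi(\exp Y)$ is (up to a volume factor) the pullback of the character of $\sigma$ at the image of $\exp Y$ in the finite reductive quotient $\mathsf G_x(\bF_q)$.

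Second, I would invoke DeBacker's homogeneity results and his parametrization of nilpotent $\bfG^\omega(K)$-orbits in $\mathfrak g^\omega(K)$ by equivalence classes of pairs $(y,\bar{\mathcal N})$, where $y$ is a point of the Bruhat--Tits building and $\bar{\mathcal N}$ is a nilpotent orbit in the finite reductive quotient at $y$. Under the Moy--Prasad isomorphism identifying $\mathfrak g^\omega_{x,0}/\mathfrak g^\omega_{x,0^+}$ with $\mathrm{Lie}(\mathsf G_x)$, the Shalika germ expansion of $\Theta_\pi$ on $\mathfrak g^\omega_{x,0^+}$ matches term by term the finite-group character expansion of $\sigma$ along unipotent classes. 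The Kawanaka wavefront set $\WF(\sigma)$ is characterized as the set of maximal unipotent supports in this finite-group expansion, so its lifts through the base point $x$ are precisely the maximal orbits of ${}^K\!\WF(\pi(x,\sigma))$.

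Third, to refine ${}^K\!\WF$ to the canonical unramified wavefront set ${}^K\!\underline\WF$, one must pass from $A(\CO)$-conjugacy-class data to data in Lusztig's canonical quotient $\bar A(\CO)$. The vertex $x$ encodes a specific such class via Kottwitz's description of component groups of centralizers of nilpotent elements and its compatibility with Lusztig's quotient. The main obstacle, and the substantive content of \cite{CMBO-cusp}, lies here: one must verify that different representatives $(x,\bar{\mathcal N})$ yielding the same $\bfG(K)$-orbit $\CO$ produce the same class in $\bar A(\CO)$, and conversely that the pair arising from $(x,\sigma)$ is not collapsed away by the canonical equivalence. Once this well-definedness is established, the assignment $(x,\sigma)\mapsto(\WF(\sigma),x)$ agrees with ${}^K\!\underline\WF(\pi(x,\sigma))$ on the nose.
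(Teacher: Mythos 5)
The paper does not prove this theorem; it is cited verbatim from \cite[Theorem 3.0.4]{CMBO-cusp}, so there is no internal proof here to compare your sketch against. That said, your outline captures the correct broad strategy: the Frobenius formula for compactly induced characters together with cuspidality of $\sigma$ localizes the Harish--Chandra character near $1$ to the finite reductive quotient $\bfG^\omega_x(\bF_q)$; the finite-group unipotent-support expansion of $\sigma$ is governed by its Kawanaka wavefront set; and the DeBacker--Okada parametrization of nilpotent $\bfG^\omega(K)$-orbits by pairs (vertex, finite nilpotent orbit) transports this to the unramified $p$-adic nilpotent cone.

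Two points need correction or strengthening, though. First, the well-definedness issue you place in step three — that different representatives $(y,\bar{\mathcal N})$ of the same $\bfG^\omega(K)$-orbit map to the same class in Lusztig's canonical quotient $\bar A(\CO)$ — is part of the construction of ${}^K\!\underline\WF$ in \cite{okada-WF} and not the substantive content of \cite{CMBO-cusp}. Second, and more seriously, you have glossed over the genuine difficulty, which is maximality across vertices. The local character expansion of $\pi(x,\sigma)$ a priori involves $\bfG^\omega(K)$-orbits arising from \emph{all} points of the building, not just $x$; your second paragraph asserts that the lifts of $\WF(\sigma)$ through $x$ "are precisely the maximal orbits of ${}^K\!\WF(\pi(x,\sigma))$", but this is exactly what must be proved. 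One has to show both that the pair $(\WF(\sigma),x)$ actually appears, and that no orbit attached to another vertex dominates it in the closure order; establishing the latter requires a comparison of degenerations of finite nilpotent orbits across parahorics that your outline does not address. Without that step you have only an upper/lower bound, not the asserted equality.
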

Lusztig gave an explicit description of $\WF(\mu)$, $\mu$ an irreducible representation of  $\C G^\omega_x$ in terms of his parametrization \cite[Theorem 4.23]{orange}. Let $(\C G^\omega_x)^\vee$ be the dual finite reductive group. Then $\mu$ is parametrized by a triple $(\tau, u_0^\vee,y)$, where $\tau$ is a semisimple element in $(\C G^\omega_x)^\vee$, $u_0^\vee$ is a special unipotent element in the centralizer $C_{(\C G^\omega_x)^\vee}(\tau)$ and $y$ is an element of the unipotent family $C_{u_0^\vee}$ parametrized by $u_0^\vee$. The element $y$ will not be important for us. Let $\widetilde C_{u_0^\vee}=C_{\widetilde u_0^\vee}$ be the family obtained by applying Alvis-Curtis duality to $C_{u_0^\vee}$. Let $d_S^\vee$ be the duality map in this finite field setting, this is defined exactly as above, see \cite[\S13.3]{orange}. 

\begin{theorem}[{\cite[Theorem 11.2]{lusztig-unip-supp}}] With the above notation, $\WF(\mu)=d_S^\vee(\tau \widetilde u_0^\vee)$. If $\mu=\sigma$ is cuspidal, then $\WF(\sigma)=d_S^\vee(\tau u_0^\vee)$.

\end{theorem}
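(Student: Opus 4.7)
The plan is to derive this statement from Lusztig's general framework in \cite{lusztig-unip-supp}, which ties Kawanaka's wavefront set to the intrinsic notion of \emph{unipotent support}. Recall that $\WF(\mu)$ is defined by Kawanaka as the maximal nilpotent orbit $\mathcal O$ in $\fg$ such that the generalized Gelfand--Graev representation $\Gamma_{\mathcal O}$ has nonzero multiplicity in $\mu$. In parallel, Lusztig introduces the unipotent support $\mathcal O^u_\mu$: the maximal unipotent class in $\bfG^\omega_x(\bF_q)$ on which the averaged character of $\mu$ is nonzero. The first key ingredient, conjectured by Kawanaka and proved by Lusztig, is the identity
\[
\WF(\mu)=\mathcal O^u_{\AZ(\mu)},
\]
so that the wavefront set can be read off from the unipotent support of the Alvis--Curtis dual representation. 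This reduces the problem to computing $\mathcal O^u_{\AZ(\mu)}$.

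For the second step, I would use the compatibility of Lusztig's parametrization with Alvis--Curtis duality: if $\mu$ corresponds to the triple $(\tau,u_0^\vee,y)$, then $\AZ(\mu)$ corresponds to $(\tau,\widetilde u_0^\vee,y')$ for some $y'$, because Alvis--Curtis duality preserves the semisimple datum $\tau$ of Deligne--Lusztig--Jordan decomposition and sends the family $C_{u_0^\vee}$ to its dual family $\widetilde C_{u_0^\vee}=C_{\widetilde u_0^\vee}$ inside the centralizer $C_{(\bfG^\omega_x)^\vee(\bF_q)}(\tau)$. One then applies Lusztig's description of unipotent support in the $\tau$-twisted Deligne--Lusztig series: on the family $C_{\widetilde u_0^\vee}$ in the centralizer, the unipotent support is governed by the special class $\widetilde u_0^\vee$, and one transports it to the ambient group $\bfG^\omega_x$ through the generalized Springer correspondence. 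Packaging these two pieces into one map is precisely Sommers' generalization of Lusztig's duality $d^\vee_S$, yielding
\[
\mathcal O^u_{\AZ(\mu)}=d^\vee_S(\tau\,\widetilde u_0^\vee).
\]

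Combining the two steps gives the formula $\WF(\mu)=d^\vee_S(\tau\widetilde u_0^\vee)$. The cuspidal case is an immediate specialization: a cuspidal representation $\sigma$ is fixed (up to sign) by Alvis--Curtis duality, so its Lusztig family is self-dual and one may take $\widetilde u_0^\vee=u_0^\vee$, producing $\WF(\sigma)=d^\vee_S(\tau u_0^\vee)$.

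The main obstacle is Step 1, namely the identification $\WF(\mu)=\mathcal O^u_{\AZ(\mu)}$ in full generality. Establishing the upper bound $\WF(\mu)\le d^\vee_S(\tau\widetilde u_0^\vee)$ requires a dimension/$a$-function estimate showing that $\mu$ cannot appear in $\Gamma_{\mathcal O}$ for $\mathcal O$ strictly larger in the closure order. Establishing sharpness requires a nonvanishing statement, which Lusztig proves via character sheaves, the cleanness property, and the compatibility of Lusztig--Spaltenstein induction on the nilpotent side with Harish-Chandra induction on the representation side. Once these technical inputs are available, Step 2 is essentially a bookkeeping exercise using the known behavior of families under Alvis--Curtis duality and the definition of $d^\vee_S$.
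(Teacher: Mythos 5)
The paper itself offers no proof of this statement; it is quoted verbatim as Lusztig's \cite[Theorem 11.2]{lusztig-unip-supp}, and the cited reference is the proof. Your proposal is therefore best read as a reconstruction of Lusztig's argument, and as such it is essentially accurate: Lusztig indeed proves Kawanaka's conjecture identifying $\WF(\mu)$ with the unipotent support of the Alvis--Curtis dual, computes the unipotent support inside each Lusztig series in terms of the family datum $(\tau,u_0^\vee)$, and the map $d_S^\vee$ packages exactly that computation; the cuspidal case then follows because cuspidal representations are Alvis--Curtis self-dual up to sign, forcing $\widetilde u_0^\vee=u_0^\vee$. Two small cautions worth keeping in mind: first, the symbol $\AZ$ in this paper denotes the $p$-adic Aubert--Zelevinsky involution, not the finite-group Alvis--Curtis duality you are invoking, so your use of $\AZ(\mu)$ is a notational abuse (harmless, but worth flagging); second, Lusztig's original argument in \cite{lusztig-unip-supp} requires $p$ and $q$ large, and the unconditional form of Kawanaka's conjecture rests on the later removal of those hypotheses (cleanness, and the work of Achar--Aubert and Taylor), a point your final paragraph gestures at but should make explicit if this were being written out in full.
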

This means that the canonical unramified wavefront set of a supercuspidal representation $\pi(x,\widetilde\sigma)$ is  
\begin{equation}\label{e:CMBO}
{}^K\!\underline\WF(\pi(x,\widetilde\sigma))=(d_S^\vee(\tau u_0^\vee),x).
\end{equation}
Since we are in a depth-zero case, the Langlands parameter $\varphi$ is trivial on the wild inertia group, and the image $\varphi(I_\sk)$ is a cyclic group generated by an element $\lambda\in {}^L\!G$ of finite order prime to $q$. Hence $\fh^\vee={\fg^\vee}^\lambda$ is a pseudo-Levi subalgebra of $\fg^\vee$. Conjecture \ref{conj-cusp} is equivalent to
\begin{equation}
G\cdot d_S^\vee(\tau u_0^\vee)\le d^\vee(G^\vee\cdot \CO_{\lambda,s}^\vee).
\end{equation}

We do not know a uniform way to attach the Langlands parameter to a depth-zero supercuspidal representation, so we will only verify the conjecture in a number of cases that exist in the literature.

\subsection{Regular supercuspidal representations}\label{s:DBR}

For all regular depth-zero supercuspidal representations \cite{KV,DBR}, the conjecture is trivially true since in that case $\fh^\vee$ is just a Cartan subalgebra of $\fg^\vee$, so the only possible Langlands nilpotent element is $0$. Hence the conjecture just says that ${}^{\bar \sk}\WF(\pi)\le \CO_p$, where $\CO_p$ is the principal $\bfG(\bar\sk)$-orbit, which is true, but trivial. On the other hand, ${}^{\bar \sk}\WF(\pi)=\CO_p$ if and only if $\pi$ is a generic representation, and this is not always the case. In general, ${}^{\bar \sk}\WF(\pi)$ can be equal to the $\bfG(\bar\sk)$-saturation of the principal nilpotent orbit in a maximal pseudo-Levi, as we explain next. 

Let $I_t=I_\sk/I^+_\sk$ be the tame quotient of the inertia group, where $I^+_\sk$ is the wild inertia subgroup. The depth-zero condition means that the parameter $\lambda$ in the introduction factors through $I_t$. Write the $L$-group as $^LG=G^\vee\rtimes \langle\theta^\vee\rangle$, where $\theta^\vee$ is a finite-order automorphism giving the inner class of $\bfG$. Suppose $G^\vee$ is semisimple. In the terminology of DeBacker and Reeder, a tame regular semisimple elliptic Langlands parameter is given a $W^\vee$-conjugacy class of pairs $(\lambda,w)$, where:
\begin{enumerate}
    \item a continuous homomorphism $\lambda:I_t\to T^\vee$ such that $Z_{G^\vee}(\lambda)=T^\vee$, and
    \item a Weyl group element $w\in W^\vee=N_{G^\vee}(T^\vee)/T^\vee$ such that $w^\vee\circ \theta^\vee\circ \lambda^q=\lambda$ and ${T^\vee}^{w^\vee\theta^\vee}$ is finite. 
\end{enumerate}
The element $w^\vee\theta^\vee$ is the image of the inverse Frobenius under the Langlands homomorphism $\varphi$. Then a complete (enhanced) Langlands parameter is given by a triple $(\lambda,w,\rho)$, where $\rho$ is an irreducible representation of the finite group $(T^\vee)^{w^\vee\theta^\vee}=Z_{G^\vee}(\varphi).$ If $X$ is the weight lattice of $\bfG$, and $W$ is the Weyl group of $\bfG$, $\theta$ the Galois automorphism giving $\theta^\vee$, there is a natural isomorphism
\begin{equation}
X/(1-w\theta)X\longleftrightarrow \Irr~(T^\vee)^{w^\vee\theta^\vee}, \quad x\mapsto \rho_x.
\end{equation}
By abuse of notation, denote $x$ also the preimage in $X$ of an element in $X/(1-w\theta)X$. Let $\pi(\lambda, w,0,x)$ denote the supercuspidal representation attached to $(\lambda,w,x)$ in {\it loc. cit}. The $0$ here stands for the zero nilpotent orbit, i.e., the image of the $SL(2)$ under the Langlands parameter must be trivial (since $(\fg^\vee)^{\lambda(I_\sk)}=\mathfrak t^\vee$). 

\begin{proposition}
    The geometric wavefront set ${}^{\bar\sk}\WF(\pi(\lambda,w,0,x))$ is the $\bfG(\bar\sk)$-saturation of the principal nilpotent orbit of the maximal pseudo-Levi subgroup $Z_{\bfG(\bar\sk)}(x).$
\end{proposition}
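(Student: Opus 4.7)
The plan is to combine Theorem \ref{t:supercuspidal-WF} with Lusztig's formula \eqref{e:CMBO} and the DeBacker-Reeder description of the cuspidal datum underlying $\pi(\lambda,w,0,x)$. First, by \cite{DBR} (or \cite{KV}), the representation $\pi(\lambda,w,0,x)$ is compactly induced from a cuspidal representation $\sigma_x$ of the reductive quotient $\bfG^\omega_y(\bF_q)$ at a suitable vertex $y=y(\lambda,w,x)$ of the Bruhat-Tits building. The regularity of $\lambda$ (namely $Z_{G^\vee}(\lambda)=T^\vee$) forces $\sigma_x$ to be a Deligne--Lusztig cuspidal representation $\pm R_{T_w}^{\bfG^\omega_y(\bF_q)}(\theta_x)$ attached to an unramified elliptic maximal torus $T_w\subset\bfG^\omega_y$ and a character $\theta_x$ of $T_w(\bF_q)$ in general position. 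Theorem \ref{t:supercuspidal-WF} then reduces the problem to computing $\WF(\sigma_x)$ and passing from the canonical unramified to the geometric wavefront set.

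Next, I would unpack $\WF(\sigma_x)$ using Lusztig's formula \eqref{e:CMBO}. In the Lusztig classification, the Deligne--Lusztig cuspidal $\sigma_x$ has Jordan datum $(\tau,u_0^\vee)$ with $\tau\in (\bfG^\omega_y)^\vee(\bF_q)$ dual to $\theta_x$ and $u_0^\vee=1$: the centralizer $T_w^\vee(\bF_q)$ is a torus, whose only cuspidal unipotent representation is the trivial one. Over $\bar\bF_q$ the element $\tau$ is regular semisimple, so $W^\vee(\tau)=1$, and $\Ind_1^{W^\vee}(\triv)$ is the regular representation of $W^\vee$; its unique constituent with $b$-invariant $0$ is $\triv_{W^\vee}$, which under Springer's correspondence for $\bfG^\omega_y$ is attached to the regular nilpotent orbit. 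Therefore $\WF(\sigma_x)=d_S^\vee(\tau)$ is the regular nilpotent orbit of $\bfG^\omega_y$.

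The last step is to pass to the geometric wavefront set. Under the Moy--Prasad/Bruhat--Tits parametrization of vertices in the building by torsion elements of $T(\bar\sk)$, the reductive quotient $\bfG^\omega_y$ at $y=y(x)$ is canonically isomorphic (after base change to $\bar\bF_q$) to the pseudo-Levi $Z_{\bfG(\bar\sk)}(x)$, where $x\in X$ is interpreted as a torsion element of $T$ via the exponential correspondence. Lifting the regular nilpotent through this identification and saturating under $\bfG(\bar\sk)$-conjugation, as dictated by the definition of the geometric wavefront set in terms of the canonical unramified one \cite{okada-WF}, yields ${}^{\bar\sk}\WF(\pi(\lambda,w,0,x))=\bfG(\bar\sk)\cdot\mathcal O_{\textup{reg}}$, where $\mathcal O_{\textup{reg}}$ denotes the regular nilpotent orbit of $Z_{\bfG(\bar\sk)}(x)$.

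The main obstacle is the pseudo-Levi identification $\bfG^\omega_y\cong Z_{\bfG(\bar\sk)}(x)$: one must unpack precisely how the DeBacker--Reeder parameter $x$ determines both the vertex $y(x)$ and the character $\theta_x$ in general position, and then match this against the Moy--Prasad identification of vertex reductive quotients with pseudo-Levi centralizers. A secondary point is that the passage from the canonical unramified to the geometric wavefront set described in \cite{okada-WF} must be verified to send the pair (regular nilpotent of $Z_{\bfG(\bar\sk)}(x)$, $y$) to the $\bfG(\bar\sk)$-saturation, and nothing larger.
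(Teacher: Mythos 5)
Your proof is correct and follows essentially the same route as the paper: reduce to Theorem \ref{t:supercuspidal-WF} via the compact-induction realization from \cite{DBR}, identify the cuspidal datum as a Deligne--Lusztig cuspidal in general position, and conclude that its Kawanaka wavefront set is regular. You flesh out the last step (via $u_0^\vee=1$, Lusztig's formula, and the $j$-induction computation) whereas the paper simply asserts that the DL cuspidal is generic; the pseudo-Levi identification you flag as the main obstacle is likewise implicit in the paper's appeal to the correspondence between reductive quotients of parahorics and pseudo-Levis, so there is no real divergence.
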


\begin{proof}
    The claim follows at once, after we recall the construction of $\pi(\lambda,w,0,x)$. The element $x$ defines a cocycle $u_x\in H^1(\sk, \bfG)$ which gives the inner twist  $\omega_x$ of $\bfG$ and $F_x$ a twisted Frobenius map and a maximal parahoric subgroup $\bfG_x$. Then $\bfG_x^{F_x}$ is a maximal parahoric subgroup of $\bfG^{\omega_x}(\sk)$. The pair $(w,x)$ defines an $F_x$-minisotropic torus $T_x$ and $\lambda$ defines a depth-zero character $\chi_x$ of $T_x$. Then Deligne-Lusztig induction defines an irreducible cuspidal representation of the reductive quotient $\overline {\bfG_x^{F_x}}$
    \[
    \kappa_x=\pm R_{T_x}^{\overline {\bfG_x^{F_x}}}(\chi_x).
    \]
    The important thing for us is that this cuspidal representation is generic, in the sense that the Kawanaka wavefront set is principal. The supercuspidal representation is compactly induced
    \[
    \pi(\lambda,w,0,x)=\operatorname{ind}_{\mathbf Z^{F_x}\bfG_x^{F_x}}^{\bfG^{\omega_x}(\sk)}(\widetilde\kappa_x).
    \]
   The wavefront set is given by Theorem \ref{t:supercuspidal-WF}. 
\end{proof}
Conjecture \ref{conj-cusp} clearly holds. Moreover, so does Conjecture \ref{conj-main-2}(2) since the $L$-packet of supercuspidals contains a generic supercuspidal representation.

\section{\bf Depth-zero supercuspidal representations of classical groups}\label{s:classical} 

In \cite{LS}, Lust and Stevens constructed $L$-packets of depth-zero supercuspidal representations of classical groups. We will verify Conjecture 1.6 for these $L$-packets. 
Let $\C G$ be a connected reductive group of classical type 
over $\bF_q$, $q$ odd. Let $F$ be the Frobenius map. Let $\C G^*$ be the dual group over $\bF_q$ with dual Frobenius also denoted $F$. Let $\Irr\, \C G^F$ be the set of equivalence classes of irreducible complex $\C G^F$-representations. According to Lusztig's classification, we can partition
\[
\Irr\, \C G^F=\sqcup_s \Irr_s \C G^F,
\]
where $s$ runs over the conjugacy classes of semisimple elements in ${\C G^*}^F$. The piece $\Irr_1 \C G^F$ is the set of irreducible unipotent representations. Since $Z(\C G)$ is connected, the  rational semisimple conjugacy classes in $\C G^*$ are in one-to-one correspondence with the geometric $F$-stable semisimple conjugacy classes in $\C G^*$ \cite[Proposition 3.7.3]{Car}.

The above decomposition is realized as follows. By \cite[Theorem 4.4.6]{Car}, there exists a natural one-to-one correspondence between geometric conjugacy classes of pairs $(T,\theta)$, where $\C T$ is a maximal $F$-stable torus in $\C G$ and $\theta$ is a character of $T$, and $F$-stable semisimple conjugacy classes in $\C G^*$. Let $s\in \C T^*\subset \C G^*$, then $\Irr_s \C G^F$ consists of all the constituents of the induced representations $R_{\C T}^{\C G}(s)$ where we think of $s$ as a character of $T$.

Lusztig's Jordan decomposition says that there is a one-to-one correspondence
\[
\Psi_s^{\C G}:\Irr_s \C G^F\longleftrightarrow \Irr_1(\C G_s^F),
\]
where $\C G_s$ is the endoscopic group of $\C G$ with dual $\C G^*_s:=Z_{\C G^*}(s)$. To have cuspidal representations, we need $Z(\C G^*)^0$ and $Z(\C G_s^*)^0$ to have the same $F$-rank. In this case, $\Psi_s^{\C G}$ gives a correspondence of cuspidal representations. In other words, the set of cuspidal $\C G^F$-representations are in one-to-one correspondence with $\C G^*$-pairs $(s,\tau)$ where $s\in {\C G^*}^F$ is semisimple of equal rank and $\tau$ is a cuspidal unipotent representation of $\C G_s^F$.

For classical groups, the classification of cuspidal unipotent representations shows that there is at most one cuspidal unipotent representation. On the other hand, the conjugacy classes of $s\in {\C G^*}^F$ are given by characteristic polynomials $P_s\in \bF_q[x]$,
\[
P_s(x)=\prod_P P(x)^{a_P},
\]
where the product runs over all irreducible self-dual monic polynomials over $\bF_q$ and nonnegative integers $a_P$ such that $\sum_P a_P \deg(P)=\dim V$, where $V$ is the defining $\mathbb F_q$-representation of $\C G^*$. In order for the semisimple class to parametrize a cuspidal representation, one needs:
\begin{enumerate}
\item if $P(x)\neq x\pm 1$, then it has even degree $n_p=2k_P$, $k_P\ge 1$, and multiplicity $a_P=\frac 12(m_P^2+m_P)$ for some integer $m_P\ge 0.$
\item if we set $a_\pm=a_{x\mp 1}$, there are integers $m_\pm\ge 0$ with conditions, depending on the type of the group, see \cite[\S7.2]{LS}, as follows:
\begin{enumerate}
\item if $\C G=SO(2n+1)$, then $a_+=2(m_+^2+m_+)$ and $a_-=2(m_-^2+m_-)$;
\item if $\C G=GSO^\pm(2n)$, then $a_+=2m_+^2$ and $a_-=2m_-^2$;
\item if $\C G=Sp(2n)$, then $a_+=2(m_+^2+m_+)+1$ and $a_-=2m_-^2$.
\end{enumerate}
\end{enumerate}
\begin{example} A cuspidal representation of $Sp(2n,\bF_q)$ is uniquely determined by a collection of nonnegative integers $m_\pm$, $m_P,k_P$ such that
\begin{equation}
    \sum_{\deg(P)>1} k_P\frac {m_P^2+m_P}2+(m_+^2+m_+)+ m_-^2=n.
\end{equation}
The $\deg(P)>1$ terms, if they appear, correspond to endoscopic unitary groups, whereas the $m_+$ term corresponds to an endoscopic $Sp$ subgroup and $m_-$ to an endoscopic even orthogonal group.
\end{example}

\subsection{Nilpotent Langlands parameters}\label{s:nil-L} Let $G$ be a classical group over a $p$-adic field $\sk$. By this we mean, a symplectic, special orthogonal, or a unitary group. Every depth-zero supercuspidal representation is compactly-induced 
\begin{equation}
    \pi=\text{ind}_K^G(\tau_1\boxtimes\tau_2),
\end{equation}
where $K$ is the normalizer of a maximal parahoric subgroup with reductive quotient $\bar K=\CG^\circ_{n_1,n_2}$ over $\bF_q$, and $\tau_1\boxtimes \tau_2$ is a cuspidal representation of $\CG^\circ_{n_1,n_2}(\bF_q)$.  The explicit cases are as follows \cite[\S2]{LS}, for more details about the notation, see {\it loc. cit.}:
\begin{enumerate}
    \item $G$ symplectic, $\CG^\circ_{n_1,n_2}(\bF_q)=Sp(2n_1,\bF_q)\times Sp(2n_2,\bF_q)$;
    \item $G$ special orthogonal, $\CG^\circ_{n_1,n_2}(\bF_q)=SO(n_1+n_1^{an},n_1,\bF_q)\times SO(n_2+n_2^{an},n_2,\bF_q)$, where $SO(m+m^{an},m,\bF_q)$ denotes the special orthogonal group with form of Witt index $m$ and anisotropic part of dimension $m^{an}\le 2$;
    \item $G$ unramified unitary, $\CG^\circ_{n_1,n_2}=U(2n_1+n_1^{an})\times U(2n_2+n_2^{an})$, a product of two unitary groups over $\bF_q$;
    \item $G$ ramified unitary, then $\CG^\circ_{n_1,n_2}$ is
    \[
    SO(n_1+n_1^{an},n_1,\bF_q)\times Sp(2n_2,\bF_q)\text{ or }Sp(2n_1,\bF_q)\times SO(n_2+n_2^{an},n_2,\bF_q).
    \]
\end{enumerate}
Let $(m_\pm^{(i)},m_P^{(i)},k_P^{(i)})$, $i=1,2$ be the collections of integers that characterizes $\tau_i$. The nilpotent Langlands parameter of $\pi$ is a nilpotent orbit $\bO^\vee_\pi$ in $\fg^\vee$, which is either $gl(n,\bC)$, $sp(2n,\bC)$ or $so(m,\bC)$. Via the Jordan normal form, $\bO^\vee_\pi$ is parametrized by a partition $\lambda$ of $n$, $2n$, or $m$, respectively. If $\fg^\vee=so(m,\bC)$, then the partition is orthogonal: each even part appears with even multiplicity. If 
$\fg^\vee=sp(2n,\bC)$, then the partition is symplectic: each odd part appears with even multiplicity. 

Lust and Stevens \cite[\S1 and \S8]{LS} attach to $\pi$ the nilpotent orbit $\bO^\vee_\pi$ given by the partition obtained by the following procedure. For every integer $\alpha$ denote by $[\alpha]$ the string $\{\alpha,\alpha-2,\alpha-4,\dots,1\}$ if $\alpha>0$ is odd, $\{\alpha,\alpha-2,\alpha-4,\dots,2\}$ if $\alpha>0$ is even, and $[\alpha]=\emptyset$ if $\alpha\le 0$. Start with an empty string and repeat the following procedure for each $P$:
\begin{enumerate}
\item if $n_P\ge 2$, append $n_P$ copies of $[m_P^{(1)}+m_P^{(2)}]$ and $n_P$ copies of $[|m_P^{(1)}-m_P^{(2)}|-1]$.
\end{enumerate}
For {\bf unramified unitary groups}, this completes the nilpotent Langlands parameter. For the other classical groups, we need to append more strings defined in terms of $m_\pm^{(i)}$ and the type of the $p$-adic group.  

\medskip

{\bf Symplectic groups.} 

\begin{enumerate}
\item[(2s)] append the strings $[2(m_+^{(1)}+m_+^{(2)})+1]$ and $[2|m_+^{(1)}-m_+^{(2)}|-1]$;
\item[(3s)] append the strings $[2(m_-^{(1)}+m_-^{(2)})-1]$ and $[2|m_-^{(1)}-m_-^{(2)}|-1]$.
\end{enumerate}

\medskip

{\bf Ramified unitary groups.}  Without loss of generality, assume $\CG^\circ_{n_1,n_2}(\bF_q)=Sp(2n_1,\bF_q)\times SO(n_2+n_2^{an},n_2,\bF_q)$. There are two cases depending on the parity of $n_2^{an}$:

\begin{enumerate}
\item[(2r)] append the strings $[2(m_+^{(1)}+m_+^{(2)})+1]$ and $[2|m_+^{(1)}-m_+^{(2)}|-1]$ if $n_2^{an}=1$, or the strings $[2(m_+^{(1)}+m_+^{(2)})]$ and $[2|m_+^{(1)}-m_+^{(2)}+1|-1]$ if $n_2^{an}$ is even;
\item[(3r)] append the strings $[2(m_-^{(1)}+m_-^{(2)})]$ and $[2|m_-^{(1)}-m_-^{(2)}-1|-1]$ if $n_2^{an}=1$, or the strings $[2(m_-^{(1)}+m_-^{(2)})-1]$ and $[2|m_-^{(1)}-m_-^{(2)}|-1]$ if $n_2^{an}$ is even.
\end{enumerate}

\medskip

{\bf Special orthogonal groups.} There are three cases depending on the parities of $n_1^{an}$ and $n_2^{an}$.

\begin{enumerate}
    \item[(2o)] 
    \begin{itemize}\item If $n_1^{an}=1$ and $n_2^{an}$ is even, append the strings $[2(m_+^{(1)}+m_+^{(2)})]$ and $[|2(m_+^{(1)}-m_+^{(2)})+1|-1]$. 

    \item If $n_1^{an}$ and $n_2^{an}$ are both even, append the strings $[2(m_+^{(1)}+m_+^{(2)})-1]$ and $[|2(m_+^{(1)}-m_+^{(2)})|-1]$.

    \item If $n_1^{an}=n_2^{an}=1$, append the strings $[2(m_+^{(1)}+m_+^{(2)})+1]$ and $[|2(m_+^{(1)}-m_+^{(2)})|-1]$.
\end{itemize}
    \item[(3o)] Exactly the same, but with $m_-$ in place of $m_+$.
\end{enumerate}

\subsection{Partitions} We record several elementary facts about partitions that will be used in the proofs in the next subsections.

\begin{definition}\label{d:sum-parts}
    If $\alpha,\beta$ are two partitions of $m,n$, respectively, the sum $\alpha+\beta$ is a partition of $m+n$ given as follows. Suppose $\alpha=(\alpha_1,\alpha_2,\dots,\alpha_\ell)$ and $\beta=(\beta_1,\beta_2,\dots,\beta_\ell)$ with $\alpha_1\ge\alpha_2\ge\dots\ge\alpha_\ell\ge 0$, $\beta_1\ge\beta_2\ge\dots\ge\beta_\ell\ge 0$. (By allowing entries of $0$, we may assume there is the same number of parts in $\alpha$ and $\beta$.) Then $\alpha+\beta=(\alpha_1+\beta_1,\alpha_2+\beta_2,\dots,\alpha_\ell+\beta_\ell).$
\end{definition}

    The above operation on partitions plays a key role in the explicit computation of $j$-induction for representations of the Weyl group of classical types.

\smallskip

If $\alpha$ and $\beta$ are two partitions, denote by $\alpha\sqcup \beta$ the partition obtained by concatenation (and reordering). If $\alpha$ is a partition, let $\alpha^t$ denote the transpose partition. Let $\alpha\le \beta$ denote the dominance order of partitions.

\begin{lemma}\label{l:partition-facts}

\begin{enumerate}
\item If $\alpha\le \beta$ then $\alpha^t\ge\beta^t$.
    \item If $\alpha_1\le\beta_1$ and $\alpha_2\le \beta_2$ then $\alpha_1\sqcup\alpha_2\le\beta_1\sqcup\beta_2$.
    \item For any two partitions $\alpha,\beta$, 
\begin{equation}\label{compatibility}
(\alpha\sqcup\beta)^t=\alpha^t+\beta^t
\end{equation}
\item For any four partitions $\alpha_1,\alpha_2,\beta_1,\beta_2$,
\begin{equation}\label{compatibility-2}
(\alpha_1\sqcup\beta_1)+(\alpha_2\sqcup\beta_2)\ge (\alpha_1+\alpha_2)\sqcup (\beta_1+\beta_2).
\end{equation}
\item For any three partitions $\alpha,\beta,\gamma$, if $\alpha \ge  \beta$, then $\alpha+\gamma\ge \beta +\gamma$.
   
\end{enumerate}

\end{lemma}

\begin{proof}
    Claims (1), (2), (3) are elementary and well known, see \cite[\S5.1]{CMBO23} for example. Claim (5) follows from (2) and (3). We prove (4).

    Let us first prove the case of three partitions:
    \[
\beta_1+(\alpha\sqcup\beta_2)\ge \alpha\sqcup (\beta_1+\beta_2).
\]
    by induction on the length of $\alpha$. If $\alpha=\emptyset$, this is clearly true. Suppose $\alpha=\{s\}$ is a singleton. Write $\beta_1=(a_1\ge a_2\ge\dots\ge a_k)$ and $\beta_2=(b_1\ge b_2\ge\dots\ge b_k)$ and suppose $i$ and $t$ are the smallest indices such that $b_i>s\ge b_{i+1}$ and $a_t+b_t>s\ge a_{t+1}+b_{t+1}$. (We think $a_0=b_0=\infty$ for the consistency of the notation.)
    Then
    \[
    \beta_1+(\beta_2\sqcup \{s\}):\  a_1+b_1\ge a_2+b_2\ge\dots \ge a_i+b_i\ge a_{i+1}+s\ge a_{i+2}+b_{i+1}\ge\dots\ge a_k+b_{k-1}\ge b_k,
    \]
    while 
    \[
    (\beta_1+\beta_2)\sqcup\{s\}:\  a_1+b_1\ge a_2+b_2\ge\dots \ge a_t+b_t> s\ge a_{t+1}+b_{t+1}\ge\dots\ge a_{k}+b_k.
    \]
    and it is immediate that $\beta_1+(\beta_2\sqcup \{s\})\ge (\beta_1+\beta_2)\sqcup\{s\}$.

    Now suppose the inequality holds for $\alpha$ and consider $\wti\alpha=\alpha\sqcup\{s\}$. Denote $\wti\beta_2=\beta_2\sqcup\{s\}$. Then
    \[
    \beta_1+(\wti\alpha\sqcup\beta_2)=\beta_1+(\alpha\sqcup \wti\beta_2)\ge \alpha\sqcup(\beta_1+\wti\beta_2)\ge \alpha\sqcup\{s\}\sqcup (\beta_1+\beta_2)=\wti\alpha\sqcup (\beta_1+\beta_2),
    \]
    where the first inequality follows by the inductive step and the second by the case of a singleton. This proves the case of three partitions.

    The inequality for four partitions (\ref{compatibility-2}) follows by induction on the length of $\alpha_1$. If $\alpha_1=\emptyset$, this is the case of three partitions. Suppose $\alpha_1=(a_1\ge a_2\ge\dots\ge a_k>0)$ is a partition and let $\bar\alpha_1=(a_1\ge a_2\ge\dots\ge a_{k-1})$ so that $\alpha_1=\bar\alpha_1\sqcup\{a_k\}$. By induction 
    \[
    (\bar\alpha_1\sqcup\beta_1)+(\alpha_2\sqcup\beta_2)\ge (\bar\alpha_1+\alpha_2)\sqcup (\beta_1+\beta_2).
    \]
    Let us look at the contribution of $a_k$ on both sides. On the right hand side $a_k$ is added to the $k$-th part $a_k'$ of $\alpha_2$ (written in decreasing order). In the left hand side, if we write {$\alpha_2\sqcup\beta_2=(c_1\ge c_2\ge\dots)$}, then $a_k$ is added to $c_j$  $j\ge k$, such that $c_i\ge a_k'$ for all $i\le j$. This means that the addition of $a_k$ in the two sides preserves the dominance order from before, and the claim follows inductively.
\end{proof}

\subsection{Symplectic groups}
We verify Conjecture \ref{conj-cusp} for the case of depth-zero supercuspidal representations of $G=Sp(2n,\sk)$, by computing the wavefront set and compare it with the nilpotent Langlands parameter defined above. 

At one extreme suppose that only endoscopic unitary factors appear, that is only polynomials $P$ with $\deg P>1$. Hence $\bO^\vee_\pi$ is obtained by appending $2k_P$ copies of $[m_P^{(1)}+m_P^{(2)}]$ and $2k_P$ copies of $[|m_P^{(1)}-m_P^{(2)}|-1]$, for each $k_P>0$. On each $Sp(2n_i)$, $i=1,2$, factor, we have 
\[
\sum_{\deg P>1} k_p a_p^{(i)}=n_i,\quad a_P^{(i)}=\frac{{m_P^{(i)2}}+m_P^{(i)}}2.
\]
To compute the wavefront set, we need to compute the $j$-induction on each factor. To simplify the notation, let us drop temporarily the indices $i$. Let $W_n$ be the Weyl group of $G^\vee$. Then, the calculation amounts to determining the irreducible $W_n$-representation given by
\[
j_{\prod_P S_{a_P}^{k_P}}^{W_n}\left(\boxtimes_P (m_P,\dots,2,1)^{\boxtimes k_P}\right).
\]
Using the formulae for $j$-induction \cite[\S3.1,\S4.4]{lusztig-j-ind}, this equals
\begin{align}
    j_{\prod_P S_{k_Pa_P}}^{W_n}\left(\boxtimes_P(m_Pk_P,\dots,2k_P,k_P)\right)=j_ {\prod_P W_{k_Pa_P}}^{W_n}(\sigma_P),
\end{align}
where
\begin{equation*}
   \sigma_P=\begin{cases} (m_P k_P,\dots,3k_P,k_P)\times ((m_P-1)k_P,\dots,4k_P,2k_P),&m_P\text{ odd}\\
   (m_P k_P,\dots,4k_P,2k_P)\times ((m_P-1)k_P,\dots,3k_P,k_P),&m_P\text{ even}.
   \end{cases}
\end{equation*}

A direct calculation with the algorithm for the Springer correspondence gives that the type $C$ nilpotent orbit for $\sigma_P$ is given by the partition
\begin{equation}\label{e:Springer1}
\sigma_P\xrightarrow{\text{Springer}} (2m_Pk_P,\dots,4k_P,2k_P).
\end{equation}
This is true in both cases, $m_P$ even or odd.

\begin{lemma}\label{l:j-ind}
    The irreducible $W_n$-representation which is $j$-induced \[j_{\prod_P S_{a_P}^{k_P}}^{W_n}\left(\boxtimes_P (m_P,\dots,2,1)^{\boxtimes k_P}\right)\]
    is attached in the Springer correspondence to the symplectic nilpotent orbit
    \[
    \sum_P (2m_Pk_P,\dots,4k_P,2k_P),
    \]
    where the sum is in the sense of Definition \ref{d:sum-parts}.
\end{lemma}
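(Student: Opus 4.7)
The plan is to combine the already-performed reduction with two standard facts about $j$-induction in type $C$. The computation preceding the lemma has reduced the left-hand side to $j_{\prod_P W_{k_P a_P}}^{W_n}\bigl(\boxtimes_P \sigma_P\bigr)$, and equation (3.1) identifies each $\sigma_P$ with the symplectic nilpotent orbit $\lambda_P := (2k_P, 4k_P, \ldots, 2 m_P k_P)$ in $\mathfrak{sp}(2 k_P a_P)$ under the Springer correspondence for $W_{k_P a_P}$ (with trivial local system). Thus the question becomes how to compute a $j$-induction of a product of symplectic Springer representations from a product of type-$C$ Weyl groups.

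The key general input I will invoke is the Spaltenstein--Lusztig compatibility of $j$-induction with induction of nilpotent orbits: for a Levi $M = \prod_i \mathrm{Sp}(2n_i)$ of $\mathrm{Sp}(2n)$ and orbits $\mathcal{O}_i \subset \mathfrak{sp}(2n_i)$ with Springer representations $E_{\mathcal{O}_i}$ of $W_{n_i}$ (trivial local system), one has
\[
j_{\prod_i W_{n_i}}^{W_n}\bigl(\boxtimes_i E_{\mathcal{O}_i}\bigr) \;=\; E_{\,\mathrm{ind}_M^{\mathrm{Sp}(2n)}(\prod_i \mathcal{O}_i)},
\]
where $\mathrm{ind}$ denotes Lusztig--Spaltenstein induction. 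This is an instance of the formulas in \cite[\S4]{lusztig-j-ind} in the general framework of \cite[\S13]{orange}. Combined with the explicit combinatorial formula for Lusztig--Spaltenstein induction from such a Levi of classical type---namely that the induced orbit has partition equal to the sum of the input partitions in the sense of Definition \ref{d:sum-parts} (see \cite[Ch.~7]{CM})---this gives that the $j$-induction in question corresponds under the Springer correspondence to $\sum_P \lambda_P = \sum_P (2k_P, 4k_P, \ldots, 2 m_P k_P)$. The result is indeed a symplectic partition because all of its parts, being sums of even numbers, are even.

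The main obstacle is a convention check: the Springer correspondence for type $C$ involves sign choices (trivial vs. sign local system, top vs. bottom harmonic degree) and the compatibility above is typically stated for the trivial local system. One must verify that (3.1), derived from the explicit $j$-induction algorithm, indeed produces the Springer representation attached to $\lambda_P$ with the \emph{trivial} local system, so that the Spaltenstein--Lusztig compatibility applies as stated. This is a routine verification once one tracks through the algorithm used to establish (3.1), since the input representations $(1,2,\dots,m_P)$ are the top-degree Springer representations of the symmetric groups attached to staircase partitions, and $j$-induction preserves the "trivial local system" property at each intermediate step.
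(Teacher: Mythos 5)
The key step you invoke does not apply here, and this is a genuine gap. You write that $M=\prod_i \mathrm{Sp}(2n_i)$ is a Levi subgroup of $\mathrm{Sp}(2n)$, but it is not: the Levi subgroups of $\mathrm{Sp}(2n)$ are all of the form $\mathrm{GL}(a_1)\times\cdots\times\mathrm{GL}(a_r)\times\mathrm{Sp}(2m)$, with a \emph{single} symplectic factor (delete simple roots from $C_n$). A product $\prod_i\mathrm{Sp}(2n_i)$ with more than one factor is a \emph{pseudo-Levi} (obtained by deleting a node of the \emph{extended} Dynkin diagram of $C_n$), and correspondingly $\prod_P W_{k_Pa_P}\subset W_n$ is a reflection subgroup but not conjugate to a parabolic subgroup of $W_n$. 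Consequently Lusztig--Spaltenstein induction $\mathrm{ind}_M^{\mathrm{Sp}(2n)}$ of nilpotent orbits is not even defined from this subalgebra, and the compatibility ``$j$-induction of Springer representations $=$ Springer representation of the induced orbit'' --- which is proved precisely by using the parabolic subgroup with Levi $M$ --- has no content here. The citations you give do not cover this situation either: Lusztig's formulas in \cite[\S3--\S5]{lusztig-j-ind} give the explicit combinatorial recipe for $j$-induction from such non-parabolic reflection subgroups (which is what the paper uses), not a reduction to orbit induction; and \cite[Ch.~7]{CM} only treats Lusztig--Spaltenstein induction from genuine Levis of classical type, where the answer for an $\mathfrak{sp}$-target involves doubling the $\mathfrak{gl}$-partitions and a symplectic collapse, not the component-wise sum of Definition~\ref{d:sum-parts}.

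The paper's own proof avoids all this: it reduces (by transitivity of $j$-induction) to two factors $P_1,P_2$ and then directly checks, using the explicit symbol/Springer algorithm in type $C$ together with the formulas of \cite{lusztig-j-ind}, that $j_{W_1\times W_2}^{W}(\sigma_1\boxtimes\sigma_2)$ is the Springer representation for $(2k_1,4k_1,\dots,2m_1k_1)+(2k_2,4k_2,\dots,2m_2k_2)$. If you want to salvage a ``conceptual'' argument, you could $j$-induce in one step from the genuine parabolic $\prod_P S_{a_P}^{k_P}$ (whose Levi is $\prod_P\mathrm{GL}(a_P)^{k_P}\subset \mathrm{Sp}(2n)$) and then compute the resulting Lusztig--Spaltenstein induced orbit via \cite[Ch.~7]{CM}; but that computation is no easier than the direct symbol check, and the ``sum of partitions'' formula you quote would still have to be derived rather than cited.
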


\begin{proof}
    By the calculation leading up to (\ref{e:Springer1}), we need to show that the nilpotent orbit for $j_ {\prod_P W_{k_Pa_P}}^{W_n}(\boxtimes_P\sigma_P)$ is as claimed in the statement of the lemma. Since $j$-induction works in stages, this comes down to the case of two polynomials $P_1,P_2$, that is to the easy verification that $j_{W_1\times W_2}^{W}(\sigma_{1}\boxtimes \sigma_{2})$ is the Springer representation for the nilpotent $(2m_1k_1,\dots,4k_1,2k_1)+(2m_2k_2,\dots,4k_2,2k_2)$.
\end{proof}

\begin{proposition}\label{p:unitary}Let $\pi$ be a depth-zero supercuspidal $Sp(2n,\mathsf k)$-representation as above.    Suppose there are only endoscopic unitary factors, i.e., only polynomials $P$ with $\deg P>1$ appear. Then 
\begin{enumerate}
\item $\bO^\vee_\pi=[1]\sqcup \bigsqcup_P [m_P^{(1)}+m_P^{(2)}]^{2k_P}\sqcup [|m_P^{(1)}-m_P^{(2)}|-1]^{2k_P}$.
\item \[{}^{\bar{\mathsf k}}\WF(\pi)=\bigsqcup_{i=1,2}\left(\sum_P(2m_P^{(i)} k_p,\dots,4k_P,2k_P)\right)
\]
\item Conjecture \ref{conj-cusp} holds in this case.
\end{enumerate}
    
\end{proposition}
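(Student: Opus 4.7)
The three parts will be handled in sequence. Part~(1) is a direct specialization of the Lust--Stevens recipe recalled in Section~\ref{s:nil-L}: under the hypothesis that only polynomials $P$ with $\deg P>1$ contribute, we have $m_\pm^{(i)}=0$ for $i=1,2$, so steps~(2s) and~(3s) reduce to the single block forced by the odd dimension of the dual group $SO(2n+1,\bC)$, and the unitary strings account for the remaining parts as listed.

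For Part~(2), I will apply Theorem~\ref{t:supercuspidal-WF} to identify ${}^K\!\underline\WF(\pi(x,\tau_1\boxtimes\tau_2))$ with $(\WF(\tau_1\boxtimes\tau_2),x)$. Since the Kawanaka wavefront is multiplicative under box products, this reduces to computing $\WF(\tau_i)$ for each cuspidal representation $\tau_i$ of $Sp(2n_i,\bF_q)$. The Lusztig formula $\WF(\tau_i)=d_S^\vee(s_i u_0^\vee)$, together with the $j$-induction calculation carried out in Lemma~\ref{l:j-ind}, yields the symplectic partition $\sum_P(2k_P,4k_P,\dots,2m_P^{(i)}k_P)$ in $sp(2n_i)$. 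The passage from the canonical unramified wavefront set to the geometric wavefront in $sp(2n,\bar\sk)$ amounts to taking the disjoint union of the two partitions: the reductive quotient $Sp(2n_1)\times Sp(2n_2)$ of the parahoric embeds into $Sp(2n)$ as a subgroup of maximal rank, and the Jordan type of a direct sum of two nilpotents in the product is the concatenation of the two individual Jordan types.

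For Part~(3), I must verify the dominance inequality ${}^{\bar\sk}\WF(\pi)\le d^\vee(\bO^\vee_\pi)$ in the closure order on symplectic nilpotent orbits of $sp(2n)$. Spaltenstein duality in type $B/C$ is given by transpose followed by symplectic ($C$-)collapse. Since $\bO^\vee_\pi$ is a disjoint union of the strings $[m_P^{(1)}+m_P^{(2)}]^{2k_P}$ and $[|m_P^{(1)}-m_P^{(2)}|-1]^{2k_P}$, its transpose distributes over the union and can be described explicitly. My plan is to reduce first to the case of a single polynomial $P$ by the compatibility of $j$-induction with induction in stages (as in the proof of Lemma~\ref{l:j-ind}) together with Lemma~\ref{l:prop-nil}(3), and then to carry out the single-$P$ verification by a direct part-by-part comparison. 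The main obstacle will be the symplectic collapse, which in general mixes contributions across different $P$ and across the two parahoric factors $i=1,2$; the identity $|[a+b]|+|[|a-b|-1]|=(a^2+a+b^2+b)/2$, which balances the sizes of the dual strings against $a_P^{(1)}+a_P^{(2)}$, should make the comparison transparent at each $P$.
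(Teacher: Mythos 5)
Parts (1) and (2) are handled in essentially the same way as the paper, and you are actually more careful than the printed statement about the string $\{1\}$ contributed by steps~(2s)--(3s) when $m_\pm^{(i)}=0$ (the defining representation has odd dimension $2n+1$, so $a_+\geq 1$ is forced).

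Part (3), however, is a plan rather than a proof, and the plan as stated would have to confront complications the paper deliberately avoids. You propose to reduce to a single polynomial $P$ via Lemma~\ref{l:prop-nil}(3) and then argue by a part-by-part comparison, flagging the symplectic collapse as the main obstacle. What the paper does instead is more direct and sidesteps the collapse entirely: it computes, for a single $P$ with both parahoric factors nonzero, the explicit transpose
\[
\left([m^{(1)}+m^{(2)}]^{2k}\sqcup [|m^{(1)}-m^{(2)}|-1]^{2k}\right)^t
=(2k,4k,\dots,2m^{(1)}k)\sqcup (2k,4k,\dots,2m^{(2)}k),
\]
observes that all parts on the right are even so no $C$-collapse is needed, and then handles the general multi-$P$ case via the elementary compatibility $(\alpha\sqcup\beta)^t=\alpha^t+\beta^t$. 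The final inequality is then the purely combinatorial fact that
\[
\sum_P\bigl(\alpha_P^{(1)}\sqcup\alpha_P^{(2)}\bigr)\;\geq\;\bigsqcup_{i=1,2}\Bigl(\sum_P\alpha_P^{(i)}\Bigr)
\]
in dominance order (interchanging $+$ and $\sqcup$ gives a larger partition). This last step is the crux of the argument and is absent from your outline. Your size identity $|[a+b]|+|[\,|a-b|-1\,]|=(a^2+a+b^2+b)/2$ only controls the total, which is not enough to deduce dominance; you would still need the explicit transpose calculation above or something equivalent. So while your overall strategy is compatible with the paper's, the key computations for (3) --- the single-$P$ transpose identity and the $\sum$ vs.~$\sqcup$ comparison --- are missing, and the proposed detour through $j$-induction in stages and Lemma~\ref{l:prop-nil}(3) is unnecessary machinery once you notice the transposes are already symplectic.
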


\begin{proof}
Claim (1) is just a restatement of the Lust--Stevens parametrization. Notice the presence of the entry $[1]$ because of the step (2s) in the procedure. Claim (2) is directly implied by Lemma \ref{l:j-ind}. It remains to verify (3).

Suppose first that $n_1=n$, $n_2=0$, this is the hyperspecial case. Write $m_P$ for $m_P^{(1)}$. Then $\mathbb O^\vee_\pi$ is simply
\[
\bO^\vee_\pi=[1]\sqcup\bigsqcup_P (\underbrace{m_P,\dots,m_P}_{2k_P},\underbrace{m_P-1,\dots,m_P-1}_{2k_P},\dots,\underbrace{1,\dots,1}_{2k_P}),
\]
while
\[{}^{\bar{\mathsf k}}\WF(\pi)=\sum_P(2m_P k_p,\dots,4k_P,2k_P)\]
The combinatorial formula for computing $d^\vee$ on partitions $\underline p$ of $2n+1$ of odd orthogonal type is:
\begin{equation}\label{e:duality-C}
d^\vee(\underline p)=((\underline p^-)_C)^t=((\underline p^t)^-)_C,
\end{equation}
where:
\begin{itemize}
    \item $\underline p\to\underline p^-$ decreases the smallest entry in $\underline p$ by $1$, thus making $\underline p^-$ a partition of $2n$;
    \item $\underline p^-\to (\underline p^-)_C$ is the $C$-collapse, resulting in a symplectic partition, see \cite[\S6.3]{CM};
    \item $^t$ is the transpose partition.
\end{itemize}
In our case, notice that if we denote the partition of $\mathbb O^\vee_\pi$ by $\underline p$, then $\underline p^-$ simply removes the entry $1$, and the $C$-collapse doesn't do anything since $\underline p^-$ is already a symplectic partition. 

Moreover \begin{align*}
(\underbrace{m_P,\dots,m_P}_{2k_P},\underbrace{m_P-1,\dots,m_P-1}_{2k_P},\dots,\underbrace{1,\dots,1}_{2k_P})^t
=(2m_P k_p,\dots,4k_P,2k_P)\end{align*}
which proves that $d^\vee(\bO_\pi^\vee)={}^{\bar{\mathsf k}}\WF(\pi)$ if there is a single $P$. If there is more than one polynomial $P$, the claim follows as well by the compatibility (\ref{compatibility})
for two (or more) partitions $\alpha,\beta$.

\medskip

Now suppose both $n_1,n_2>0$. The key observation is that for all $k$ and $m^{(1)}\ge m^{(2)}$,
\begin{align*}
d^\vee([1]\sqcup [m^{(1)}+m^{(2)}]^{2k}\sqcup [|m^{(1)}-m^{(2)}|-1]^{2k})&=\left((m^{(1)}+m^{(2)},m^{(1)}+m^{(2)}-2,\dots,m^{(1)}-m^{(2)}+2\right.,\\&\left. m^{(1)}-m^{(2)},m^{(1)}-m^{(2)}-1,\dots,2,1)^{2k}\right)^t\\
&=(2m^{(1)}k,\dots,4k,2k)\sqcup (2m^{(2)}k,\dots,4k,2k),
\end{align*}
which proves (3) (with equality in Conjecture \ref{conj-cusp}) in the case of a single polynomial $P$. 

Finally, in the general case with more polynomials, we use again the compatibility (\ref{compatibility}) and see that
\begin{align*}
d^\vee(\bO^\vee_\pi)&=\left(\bigsqcup_P [m_P^{(1)}+m_P^{(2)}]^{2k_P}\sqcup [|m_P^{(1)}-m_P^{(2)}|-1]^{2k_P}\right)^t\\
&=\sum_P \left((2m_P^{(1)} k_P,\dots,4k_P,2k_P)\sqcup (2m_P^{(2)} k_P,\dots,4k_P,2k_P)\right)
\end{align*}
Using Lemma \ref{l:partition-facts}(5) and applying the inequality (\ref{compatibility-2}) repeatedly, one can see that $d^\vee(\bO^\vee_\pi)$ is greater in the partition order than ${}^{\bar{\mathsf k}}\WF(\pi)=\bigsqcup_{i=1,2}\left(\sum_P(2m_P^{(i)} k_p,\dots,4k_P,2k_P)\right)$. Notice that the difference between the two formulae is simply that $\Sigma$ and $\sqcup_{i=1,2}$ are interchanged.

To see that the inequality ${}^{\bar{\mathsf k}}\WF(\pi)\le d^\vee(\bO^\vee_\pi)$ can be strict, consider the case when there are just two distinct polynomials $P_1, P_2$, where $P_i$ occurs on the $Sp(2n_i)$ factor. Then 
\[
{}^{\bar{\mathsf k}}\WF(\pi)=(2m_{P_1}^{(1)} k_{P_1},\dots,4k_{P_1},2k_{P_1})\sqcup (2m_{P_2}^{(2)} k_{P_2},\dots,4k_{P_2},2k_{P_2})
\]
while
\[
d^\vee(\bO^\vee_\pi)=(2m_{P_1}^{(1)} k_{P_1},\dots,4k_{P_1},2k_{P_1})+(2m_{P_2}^{(2)} k_{P_2},\dots,4k_{P_2},2k_{P_2}).
\]

\end{proof}

The other extreme is that only endoscopic factors of type $B$ and $D$ occur in the construction of the cuspidal representation on the finite reductive group of the parahoric $Sp(2n_1)\times Sp(2n_2)$.

\begin{proposition}\label{p:no-unitary}
Let $\pi$ be a depth-zero supercuspidal $Sp(2n,\mathsf k)$-representation as above.    Suppose there are no endoscopic unitary factors, i.e., no polynomials $P$ with $\deg P>1$ appear. Then 
\begin{enumerate}
\item $\bO^\vee_\pi=[2(m_+^{(1)}+m_+^{(2)})+1]\sqcup [2|m_+^{(1)}-m_+^{(2)}|-1]\sqcup [2(m_-^{(1)}+m_-^{(2)})-1]\sqcup [2|m_-^{(1)}-m_-^{(2)}|-1].$
\item \[{}^{\bar{\mathsf k}}\WF(\pi)=\bigsqcup_{i=1,2}[2(m_+^{(i)}+m_-^{(i)})]\sqcup \begin{cases}[2(m_+^{(i)}-m_-^{(i)})],&m_+^{(i)}\ge m_-^{(i)},\\
[2(m_-^{(i)}-m_+^{(i)}-1)],&m_+^{(i)}< m_-^{(i)}
\end{cases}.
\]
\item Conjecture \ref{conj-cusp} holds in this case.
\end{enumerate}
\end{proposition}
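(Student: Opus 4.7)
Part (1) is immediate from the Lust-Stevens parametrization of \S\ref{s:nil-L}, cases (2s) and (3s): when no polynomial $P$ of degree $>1$ appears, only the four strings indexed by $m_\pm^{(i)}$ survive.

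For part (2), I would follow the template of Proposition \ref{p:unitary}. By Theorem \ref{t:supercuspidal-WF} the canonical unramified wavefront set is $(\WF(\tau_1)\boxtimes\WF(\tau_2), x)$, and for each factor Lusztig's formula gives $\WF(\tau_i) = d_S^\vee(s_i u_0^i)$, where $s_i \in SO(2n_i+1,\bF_q)$ has $\pm 1$-eigenspaces of dimensions $a_\pm^{(i)}$ and $u_0^i$ is the unique cuspidal unipotent class in $C(s_i) = S(O(a_+^{(i)}) \times O(a_-^{(i)}))$. The main step is a combinatorial lemma analogous to Lemma \ref{l:j-ind}: identify the Springer $W(s_i)$-representation $E^{W(s_i)}_{u_0^i,1}$ (a cuspidal representation of the type-$B\times D$ Weyl group of $C(s_i)$), $j$-induce it to $W(B_{n_i})$, and read off the inverse Springer image as the stated symplectic partition in $\mathfrak{sp}(2n_i)$, with the case split according to whether $m_+^{(i)} \ge m_-^{(i)}$. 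Since $Sp(2n_1) \times Sp(2n_2)$ sits block-diagonally in $Sp(2n)$, the $\bfG(\bar\sk)$-saturation of a product nilpotent is the concatenation of Jordan partitions, giving ${}^{\bar\sk}\WF(\pi) = \WF(\tau_1) \sqcup \WF(\tau_2)$ as claimed.

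For part (3), I compute $d^\vee(\bO^\vee_\pi)$ using Spaltenstein's algorithm for $\mathfrak{so}(2n+1) \to \mathfrak{sp}(2n)$ (transpose, remove one box from the top column, then $C$-collapse) and compare with ${}^{\bar\sk}\WF(\pi)$ from part (2). One first handles the hyperspecial subcase $n_2 = 0$, where $\bO^\vee_\pi = [2m_+^{(1)}+1] \sqcup [2m_+^{(1)}-1] \sqcup [2m_-^{(1)}-1] \sqcup [2m_-^{(1)}-1]$ and a direct partition manipulation shows $d^\vee(\bO^\vee_\pi) = \WF(\tau_1)$ with equality. The general case then reduces to this via the compatibility (\ref{compatibility}) for transposes of disjoint unions, together with a case analysis on the signs of $m_+^{(1)} - m_+^{(2)}$ and $m_-^{(1)} - m_-^{(2)}$. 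The resulting $d^\vee(\bO^\vee_\pi)$ dominates (in the closure order) the disjoint-union wavefront set $\WF(\tau_1) \sqcup \WF(\tau_2)$, with strict inequality possible when a $(+)$-string of one factor crosses a $(-)$-string of the other (e.g.\ $m_+^{(1)} > 0$, $m_-^{(2)} > 0$, $m_-^{(1)} = m_+^{(2)} = 0$).

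The main obstacle will be the $j$-induction step in part (2). Unlike the unitary case treated by Lemma \ref{l:j-ind}, the Springer representation attached to the cuspidal unipotent is a non-trivial bipartition of mixed type $B \times D$, and the $j$-induction rule to $W(B_{n_i})$ requires careful bookkeeping with symbols, particularly to distinguish the two branches $m_+^{(i)} \ge m_-^{(i)}$ and $m_+^{(i)} < m_-^{(i)}$. Once this Springer-side calculation is in place, the comparison in part (3) reduces to routine partition combinatorics.
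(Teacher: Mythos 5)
Your proposal takes essentially the same route as the paper: part~(1) from the Lust--Stevens recipe, part~(2) by computing the Kawanaka wavefront sets of $\tau_1,\tau_2$ via $j$-induction of the cuspidal Springer bipartitions of type $B\times D$ into $W(C_{n_i})$ followed by the type-$C$ Springer algorithm, and part~(3) by explicit Spaltenstein duality and the transpose/disjoint-union compatibility, with equality in the hyperspecial case $n_2=0$. The only difference is emphasis: the step you flag as the ``main obstacle'' (evaluating the $j$-induction) is carried out in the paper in closed form using the formulae of \cite[\S5]{lusztig-j-ind}, yielding the bipartition $\left((1,\dots,m_+)+(1,\dots,m_-)\right)\times\left((1,\dots,m_+)+(1,\dots,m_--1)\right)$ whose Springer image produces exactly the case split on $m_+^{(i)}\gtrless m_-^{(i)}$ that you anticipate.
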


\begin{proof}

Claim (1) is just a restatement of the above procedure by Lust--Stevens, listed here for convenience. Claim (2) follows from formula (\ref{e:CMBO}) by a direct calculation using $j$-induction. More precisely, ${}^{\bar{\mathsf k}}\WF(\pi)$ is the saturation (concatenation in terms of the partitions) of the two Kawanaka wavefront set nilpotent orbits on the two factors $Sp(2n_i)$ of the maximal parahoric. 

Fix one factor and drop the index $(i)$ from the notation for simplicity. The Kawanaka wavefront set is the symplectic nilpotent orbit for which the Springer representation (attached to the trivial local system) is given the $j$-induced
\[
j_{W(B)\times {W(D)}}^{W_n}\left( (m_+,\dots,2,1)\times (m_+,\dots,2,1))\boxtimes (m_-,\dots,2,1)\times (m_--1,\dots,2,1))\right),
\]
where the notation indicates that the first factor is of type $B$ and the second of type $D$. We use the customary bipartition notation $\alpha\times \beta$ for an irreducible representation of the Weyl groups of types $B$ or $D$. 

Using the formulae for $j$-induction from \cite[\S5]{lusztig-j-ind}, we see this $j$-induced representation is the bipartition
\[
\left((m_+,\dots,2,1)+(m_-,\dots,2,1)\right)\times \left((m_+,\dots,2,1)+(m_--1,\dots,2,1)\right),
\]
where $+$ has the meaning from Definition \ref{d:sum-parts}. Applying the combinatorial algorithm for the Springer representation for {\it type $C$},
we find that the corresponding {\it symplectic} nilpotent orbit is
\[
[2(m_++m_-)]\sqcup \begin{cases}[2(m_+-m_-)],&m_+\ge m_-,\\
[2(m_--m_+-1)],&m_+< m_-
\end{cases},
\]
and Claim (2) is proven.

Claim (3) follows by computing Spaltenstein duality $d^\vee(\bO^\vee_\pi)$ using formulas (\ref{e:duality-C}) and (\ref{compatibility}). We begin with the hyperspecial case: $n_2=0$, and $m_+^{(2)}=m_-^{(2)}=0$, and explain how we get an equality $d^\vee(\bO^\vee_\pi)={}^{\bar{\mathsf k}}\WF(\pi)$.

For simplicity, denote $a=m_+^{(1)}$, $b=m_-^{(1)}$. Then the partition for $\mathbb O_\pi^\vee$ is
\[\underline p=[2a+1]\sqcup [2a-1]\sqcup [2b-1]\sqcup [2b-1].
\]
Firstly, notice that
\[
(2a+1,2a-1,2a-3,\dots,3,1)^t=(a+1,a,a,a-1,a-1,\dots,2,2,1,1),
\]
an identity that we will use repeatedly in the proof.
Next, using (\ref{compatibility}), we get
\begin{equation}\label{e:case-1}
\begin{aligned}
\underline p^t&=(a+1,a,a,a-1,a-1,\dots,1,1)+(a,a-1,a-1,\dots,1,1)+(2b,2(b-1),2(b-1),\dots,4,4,2,2)\\
&=(2a+1,2a-1,2a-1,\dots,3,3,1,1)+(2b,2(b-1),2(b-1),\dots,4,4,2,2)\\
&=\begin{cases}
    (2(a+b)+1,2(a+b)-3,2(a+b)-3,\dots,2(a-b)+5,2(a-b)+5,&\\\ \ \ \ \ \ \ \ \ \  2(a-b)+1,2(a-b)+1,\dots,3,3,1,1),& \ \ \ a\ge b,\\
    (2(a+b)+1,2(a+b)-3,2(a+b)-3,\dots,2(b-a)+1,2(b-a)+1,&\\\ \ \ \ \ \ \ \ 2(b-a)-2,2(b-a)-2,\dots,4,4,2,2),& \ \ \ a<b.
\end{cases}
\end{aligned}
\end{equation}
Now it is easy to see that removing a $1$ and applying the $C$-collapse, we get
\[
d^\vee(\mathbb O_\pi^\vee)=((\underline p^t)^-)_C=\begin{cases}
[2(a+b)]\sqcup [2(a-b)], &a\ge b,\\
[2(a+b)]\sqcup [2(b-a)-2], &a<b,
\end{cases}
\]
and this is exactly the partition for ${}^{\bar{\mathsf k}}\WF(\pi)$.

\medskip

It remains to prove the inequality $d^\vee(\mathbb O_\pi^\vee)\ge {}^{\bar{\mathsf k}}\WF(\pi)$ for arbitrary $m_\pm^{(i)}$. 

Denote $\underline p=[2(m_+^{(1)}+m_+^{(2)})+1]\sqcup [2|m_+^{(1)}-m_+^{(2)}|-1]\sqcup [2(m_-^{(1)}+m_-^{(2)})-1]\sqcup [2|m_-^{(1)}-m_-^{(2)}|-1].$ Using (\ref{compatibility}), we have
\begin{align*}
\underline p^t&=(m_+^{(1)}+m_+^{(2)}+1,m_+^{(1)}+m_+^{(2)},m_+^{(1)}+m_+^{(2)},\dots,1,1)+(|m_+^{(1)}-m_+^{(2)}|,|m_+^{(1)}-m_+^{(2)}|-1,|m_+^{(1)}-m_+^{(2)}|-1,\dots,1,1)+
\\&(m_-^{(1)}+m_-^{(2)},m_-^{(1)}+m_-^{(2)}-1,m_-^{(1)}+m_-^{(2)}-1,\dots,1,1)+(|m_-^{(1)}-m_-^{(2)}|,|m_-^{(1)}-m_-^{(2)}|-1,|m_-^{(1)}-m_-^{(2)}|-1,\dots,1,1).
\end{align*}
This is a sum of four partitions. Set $\gamma_+$ to be the sum of the first two, and $\gamma_-$ the sum of the last two. Then $\underline p^t=\gamma_++\gamma_-$, 
\begin{align*}
\gamma_+&=(2M_++1,2M_+-1,2M_+-1,2M_+-3,2M_+-3,\dots,1,1)\sqcup(2m_+,2m_+,2m_+-2,2m_+-2,\dots,2,2),\\
\gamma_-&=(2M_-,2M_--2,2M_--2,2M_--4,2M_--4,\dots,2,2)\sqcup(2m_--1,2m_--1,2m_--3,2m_--3,\dots,1,1),
\end{align*}
where $M_\pm=\max\{m_\pm^{(1)},m_\pm^{(2)}\}$, and $m_\pm=\min\{m_\pm^{(1)},m_\pm^{(2)}\}.$ Denote the two partitions in $\gamma_+$ by $\gamma_+^M$ and $\gamma_+^m$, respectively, so $\gamma_+=\gamma_+^M\sqcup \gamma_+^m$, and similarly for $\gamma_-$.

We need to compare $((\gamma_++\gamma_-)^{-})_C$ with the partitions describing the geometric wavefront set. We will use inequality (\ref{compatibility-2}).
 Notice that for $\gamma_++\gamma_-$, there are two ways to apply this inequality and we will choose the appropriate way depending on the relation between the $m_\pm^{(i)}$'s. More precisely, if 
\begin{enumerate}
\item $M_+=m_+^{(i)}$ and $M_-=m_-^{(i)}$, then $\alpha_1=\gamma_+^M$ and $\alpha_2=\gamma_-^M$;
\item $M_+=m_+^{(i)}$ and $M_-=m_-^{(i')}$, $i\neq i'$, then $\alpha_1=\gamma_+^M$ and $\alpha_2=\gamma_-^m$.
\end{enumerate}
To compute $(\alpha_1+\alpha_2)$ and $(\beta_1+\beta_2)$, one case, namely when $\alpha_1=\gamma_+^M$ and $\alpha_2=\gamma_-^M$ has already appeared in (\ref{e:case-1}). The other possibilities are of the form:
\begin{enumerate}
\item  $(2a+1,2a-1,2a-1,\dots,3,3,1,1)+(2b-1,2b-1,2b-3,2b-3,\dots,1,1)$
\begin{align}\label{e:case-2}
   =\begin{cases}
    (2(a+b),2(a+b)-2,2(a+b)-4,\dots,2(a-b)+2,2(a-b)+1,&\\\ \ \ \ \ \ \ \ \ \  2(a-b)-1,2(a-b)-1,\dots,3,3,1,1),& \ \ \ a\ge b,\\
    (2(a+b),2(a+b)-2,2(a+b)-4,\dots,2(b-a),2(b-a)-1,&\\\ \ \ \ \ \ \ \ 2(b-a)-3,2(b-a)-3,\dots,3,3,1,1),& \ \ \ a<b;
\end{cases}
    \end{align}
    
    \item $(2a,2a-2,2a-2,2a-4,2a-4,\dots,2,2)+(2b,2b,2b-2,2b-2,\dots,2,2)$
   \begin{align}\label{e:case-3}
   =\begin{cases}
    (2(a+b),2(a+b)-2,2(a+b)-4,\dots,2(a-b),&\\\ \ \ \ \ \ \ \ \ \  2(a-b)-2,2(a-b)-2,\dots,4,4,2,2),& \ \ \ a\ge b+1,\\
    (2(a+b),2(a+b)-2,2(a+b)-4,\dots,2(b-a)+2,&\\\ \ \ \ \ \ \ \ 2(b-a),2(b-a),\dots,4,4,2,2),& \ \ \ a\le b;
\end{cases}
    \end{align} 
    
    \item $(2a,2a,2a-2,2a-2,\dots,2,2)+(2b-1,2b-1,2b-3,2b-3,\dots,1,1)$
    \begin{align}\label{e:case-4}
   =\begin{cases}
    (2(a+b)-1,2(a+b)-1,2(a+b)-5,2(a+b)-5,\dots,2(a-b)+3, 2(a-b)+3,&\\\ \ \ \ \ \ \ \ \ \  2(a-b),2(a-b),\dots,4,4,2,2),& \ \ \ a\ge b,\\
    (2(a+b)-1,2(a+b)-1,2(a+b)-5,2(a+b)-5\dots,2(b-a)+3,2(b-a)+3&\\\ \ \ \ \ \ \ \ 2(b-a)-1,2(b-a)-1,\dots,3,3,1,1),& \ \ \ a<b;
\end{cases}
\end{align}
\end{enumerate}

Suppose $M_+=m_+^{(1)}$ and $M_-=m_-^{(1)}$. Then 
\[\gamma_++\gamma_-\ge (\gamma_+^M+\gamma_-^M)\sqcup (\gamma_+^m+\gamma_-^m)
\]
The right hand side equals the union of (\ref{e:case-1}), in which $a=m_+^{(1)}$ and $b=m_-^{(1)}$, and (\ref{e:case-4}), in which $a=m_+^{(2)}$ and $b=m_-^{(2)}$. It is not difficult, albeit a bit tedious, to compute the $C$-collapse and find that
\[
((\gamma_+^M+\gamma_-^M)\sqcup (\gamma_+^m+\gamma_-^m))^{-}_C
\]
equals exactly the disjoint union of the four partitions in the description of ${}^{\bar{\mathsf k}}\WF(\pi)$. 

Notice that one could be tempted to use the inequality
\[
((\gamma_+^M+\gamma_-^M)\sqcup (\gamma_+^m+\gamma_-^m))^{-}_C\ge (\gamma_+^M+\gamma_-^M)^{-}_C\sqcup (\gamma_+^m+\gamma_-^m),
\]
noting that  in this case $(\gamma_+^m+\gamma_-^m)$ is already a partition of type $C$. Moreover, $(\gamma_+^M+\gamma_-^M)^{-}_C$ equals, by (\ref{e:case-1}), exactly the union of two partitions in the wavefront set of the first factor ($i=1$). But $\gamma_+^m+\gamma_-^m$ is smaller than the union of the two partitions in the wavefront set of the second factor ($i=2$). So the $C$-collapse can't be separated in this computation. 

\begin{example} If $m_+^{(1)}=8$, $m_-^{(1)}=5$, $m_+^{(2)}=6$, $m_-^{(2)}=2$, then $M_+=8$, $M_-=5$, $m_+=6$, $m_-=3$, and 
\begin{align*}
\gamma_+^M+\gamma_-^M&=(27,23,23,19,19,15,15,11,11,7,7,5,5,3,3,1,1)\\
\gamma_-^m+\gamma_-^m&=(17,17,13,13,9,9,6,6,4,4,2,2),
\end{align*}
so $(\gamma_+^M+\gamma_-^M)\sqcup (\gamma_-^m+\gamma_-^m)$ equals
\[
(27,23,23,19,19,17,17,15,15,13,13,11,11,9,9,7,7,6,6,5,5,4,4,3,3,2,2,1,1).
\]
Removing a $1$ and applying the $C$-collapse, we get
\begin{align*}
(26,&24,22,20,18,18,16,16,14,14,12,12,10,10,8,8,6,6,6,6,4,4,4,4,2,2,2,2)\\
&=[26]\sqcup [18]\sqcup [6]\sqcup [6]\\
&=[2(m_+^{(1)}+m_-^{(1)})]\sqcup [2(m_+^{(2)}+m_-^{(2)})]\sqcup [2(m_+^{(1)}-m_-^{(1)})]\sqcup [2(m_+^{(2)}-m_-^{(2)})],
\end{align*}
as required.
\end{example}

Suppose $M_+=m_+^{(1)}$ and $M_-=m_-^{(2)}$. Then 
\[\gamma_++\gamma_-\ge (\gamma_+^M+\gamma_-^m)\sqcup (\gamma_+^m+\gamma_-^M)
\]
The right hand side equals the union of (\ref{e:case-2}), in which $a=m_+^{(1)}$ and $b=m_-^{(1)}$, and (\ref{e:case-3}), in which $a=m_-^{(2)}$ and $b=m_+^{(2)}$. Again we compute the $C$-collapse and find that
\[
((\gamma_+^M+\gamma_-^M)\sqcup (\gamma_+^m+\gamma_-^m))^{-}_C
\]
equals the disjoint union of the four partitions in the description of ${}^{\bar{\mathsf k}}\WF(\pi)$. This completes the proof.

\begin{example} If $m_+^{(1)}=8$, $m_-^{(1)}=3$, $m_+^{(2)}=6$, $m_-^{(2)}=5$, then again $M_+=8$, $M_-=5$, $m_+=6$, $m_-=3$, but
\begin{align*}
\gamma_+^M+\gamma_-^m&=(22,20,18,16,14,12,11,9,9,7,7,5,5,3,3,1,1)\\
\gamma_-^M+\gamma_+^m&=(22,20,18,16,14,12,10,8,6,4,2,2),
\end{align*}
so $(\gamma_+^M+\gamma_-^M)\sqcup (\gamma_-^m+\gamma_-^m)$ equals
\[
(22,22,20,20,18,18,16,16,14,14,12,12,11,10,9,9,8,7,7,6,5,5,4,3,3,2,2,1,1).
\]
Removing a $1$ and applying the $C$-collapse, we get
\begin{align*}
(22,&22,20,20,18,18,16,16,14,14,12,12,10,10,10,8,8,8,6,6,6,4,4,4,2,2,2,2)\\
&=[22]\sqcup [22]\sqcup [10]\sqcup [2]\\
&=[2(m_+^{(1)}+m_-^{(1)})]\sqcup [2(m_+^{(2)}+m_-^{(2)})]\sqcup [2(m_+^{(1)}-m_-^{(1)})]\sqcup [2(m_+^{(2)}-m_-^{(2)})],
\end{align*}
as required.
\end{example}

\end{proof}

 We can now consider the general case by putting together the two cases in Propositions \ref{p:no-unitary} and \ref{p:unitary}.

\begin{theorem}\label{t:symplectic}
   Conjecture \ref{conj-cusp} holds in the case when $\pi$ be a depth-zero supercuspidal $Sp(2n,\mathsf k)$-representation.
\end{theorem}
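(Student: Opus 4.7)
The plan is to reduce the general case to the two already-established extreme cases in Propositions \ref{p:unitary} and \ref{p:no-unitary} by separating the unitary-endoscopic contributions from the $m_\pm$ contributions. Given $\pi$ with data $(m_\pm^{(i)},m_P^{(i)},k_P)$, the Lust--Stevens recipe of \S\ref{s:nil-L} immediately yields a disjoint-union decomposition of the nilpotent Langlands parameter
\[
\bO^\vee_\pi \;=\; \bO^\vee_{\mathrm{unit}} \;\sqcup\; \bO^\vee_\pm,
\]
where $\bO^\vee_{\mathrm{unit}}$ collects the strings $[m_P^{(1)}+m_P^{(2)}]^{2k_P}\sqcup[|m_P^{(1)}-m_P^{(2)}|-1]^{2k_P}$ for $\deg P>1$ and $\bO^\vee_\pm$ collects the four strings from items (2s), (3s). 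On the wavefront-set side, Theorem \ref{t:supercuspidal-WF} together with (\ref{e:CMBO}) identify ${}^{\bar{\mathsf k}}\WF(\pi)$ with the $\bfG(\bar{\mathsf k})$-saturation of the Kawanaka wavefront set of $\tau_1\boxtimes\tau_2$ on $Sp(2n_1,\bF_q)\times Sp(2n_2,\bF_q)$; since this saturation is concatenation of symplectic partitions, one has ${}^{\bar{\mathsf k}}\WF(\pi)=\WF_1\sqcup\WF_2$ with $\WF_i:=\WF(\tau_i)$.

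The key combinatorial step is to prove the corresponding decomposition of each factor-wise wavefront set,
\[
\WF_i \;=\; \WF_i^{\mathrm{unit}} \;\sqcup\; \WF_i^{\pm},
\]
with $\WF_i^{\mathrm{unit}}=\sum_P(2k_P,4k_P,\dots,2m_P^{(i)}k_P)$ as in Proposition \ref{p:unitary} and $\WF_i^{\pm}$ the $m_\pm$-piece computed in Proposition \ref{p:no-unitary}(2). I would obtain this by $j$-induction in stages: the centralizer of the semisimple element parametrizing $\tau_i$ on the dual side is a direct product of the unitary-endoscopic factors and the $\pm$-endoscopic factors, so its Weyl group factors as
\[
\prod_P S_{a_P^{(i)}}^{k_P}\,\times\,\bigl(W(B)_{a_+^{(i)}}\times W(D)_{a_-^{(i)}}\bigr) \;\hookrightarrow\; W(C_{n_i^{\mathrm{unit}}})\times W(C_{n_i^{\pm}}) \;\hookrightarrow\; W(C_{n_i}).
\]
The first stage of $j$-induction is exactly the computation of Lemma \ref{l:j-ind} on the unitary factor and of Proposition \ref{p:no-unitary} on the $\pm$ factor, yielding Springer representations for the two partitions $\WF_i^{\mathrm{unit}}$ and $\WF_i^\pm$. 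The second stage is $j$-induction along the pseudo-Levi inclusion $W(C_{n_i^{\mathrm{unit}}})\times W(C_{n_i^\pm})\hookrightarrow W(C_{n_i})$, which on Springer representations corresponds to the saturation of symplectic nilpotent orbits, i.e., concatenation of partitions.

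To conclude, I would compute $d^\vee(\bO^\vee_\pi)$ via the transpose identity $(\alpha\sqcup\beta)^t=\alpha^t+\beta^t$ from Proposition \ref{p:unitary}. Because the transposes of $\bO^\vee_{\mathrm{unit}}$ and $\bO^\vee_\pm$ were already shown in Propositions \ref{p:unitary} and \ref{p:no-unitary} to be valid symplectic partitions requiring no $C$-collapse, the same is true for the transpose of the disjoint union, giving
\[
d^\vee(\bO^\vee_\pi) \;=\; d^\vee(\bO^\vee_{\mathrm{unit}})\,+\,d^\vee(\bO^\vee_\pm).
\]
Since $\alpha\sqcup\beta$ is dominated by $\alpha+\beta$ in the partition order, combining with the partition-level inequalities from Propositions \ref{p:unitary}(3) and \ref{p:no-unitary}(3) yields
\[
{}^{\bar{\mathsf k}}\WF(\pi) \;=\; (\WF_1^{\mathrm{unit}}\sqcup \WF_2^{\mathrm{unit}})\sqcup(\WF_1^\pm\sqcup\WF_2^\pm) \;\le\; d^\vee(\bO^\vee_{\mathrm{unit}})\,+\,d^\vee(\bO^\vee_\pm) \;=\; d^\vee(\bO^\vee_\pi),
\]
which is exactly Conjecture \ref{conj-cusp}(1). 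The principal obstacle will be the second step: verifying that the pseudo-Levi $j$-induction $W(C_{n_i^{\mathrm{unit}}})\times W(C_{n_i^\pm})\hookrightarrow W(C_{n_i})$ sends the box product of Springer representations to the Springer representation for the concatenated partition. This is a compatibility of Springer correspondence with saturation along a non-standard (non-Levi) subalgebra and, while expected, is not completely transparent from the formulae in \cite{lusztig-j-ind} and requires careful bookkeeping with the symbol parametrization of Lusztig's classification for type $C$.
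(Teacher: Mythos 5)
Your decomposition of $\bO^\vee_\pi$ into a unitary piece and a $\pm$ piece, and the strategy of combining Propositions \ref{p:unitary} and \ref{p:no-unitary}, is exactly what the paper does. However, there is a concrete error at the step you yourself flag as the ``principal obstacle,'' and your expected resolution of it is the wrong one.

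You claim that the $j$-induction along $W(C_{n_i^{\mathrm{unit}}})\times W(C_{n_i^\pm})\hookrightarrow W(C_{n_i})$ corresponds, via Springer, to \emph{saturation} of symplectic nilpotent orbits, i.e., concatenation $\sqcup$ of the two partitions. This is false: it corresponds to the \emph{sum} of partitions in the sense of Definition \ref{d:sum-parts}. The paper makes exactly this point in the proof of Lemma \ref{l:j-ind}, where the two-polynomial case is reduced to ``the easy verification that $j_{W_1\times W_2}^{W}(\sigma_1\boxtimes\sigma_2)$ is the Springer representation for the nilpotent $(2k_1,4k_1,\dots,2m_1k_1)+(2k_2,4k_2,\dots,2m_2k_2)$,'' a sum, not a concatenation. (One can also see that concatenation cannot be right: for $n_2=0$ the paper's $d^\vee(\bO^\vee_\pi)=d^\vee(\lambda^\vee_\pm)+(\lambda^\vee_{\mathrm{uni}})^t$ is a \emph{sum}, and Conjecture \ref{conj-cusp}(2) forces ${}^{\bar\sk}\WF(\pi)$ to equal it; a concatenation is strictly smaller whenever both pieces are nonzero.) Indeed, $j$-induction realizes a Lusztig--Spaltenstein-type ``large'' operation, not a saturation; saturation gives a smaller orbit than what $j$-induction produces.

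Consequently your formula ${}^{\bar\sk}\WF(\pi)=(\WF_1^{\mathrm{unit}}\sqcup\WF_2^{\mathrm{unit}})\sqcup(\WF_1^\pm\sqcup\WF_2^\pm)$ is wrong; the correct value is a sum of the unitary and $\pm$ pieces, as in the paper's ${}^{\bar\sk}\WF(\pi)=\mu_\pm+\mu_{\mathrm{uni}}$. Because concatenation is dominated by the sum, the inequality in Conjecture \ref{conj-cusp}(1) still goes through at the end of your argument, but only by luck: you undershoot the actual wavefront set. In particular your argument cannot establish the equality asserted in Conjecture \ref{conj-cusp}(2) for the hyperspecial case (which the paper proves by noting that equality holds for both the unitary and $\pm$ pieces when $n_2=0$ and is preserved under the sum) --- with your formula, equality would fail whenever both pieces are nonzero. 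Replacing your concatenation by the sum at the second $j$-induction stage, and quoting the verification in Lemma \ref{l:j-ind} rather than postponing it, repairs the argument and brings it in line with the paper.
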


\begin{proof}Let $\lambda_{\pm}^\vee$ be the orthogonal partition for the dual nilpotent orbit given by the Lust--Stevens algorithm for the polynomials $P$ with degree equal to $1$. That is, $\lambda_{\pm}^\vee$ has an expression like in Proposition \ref{p:no-unitary}(1). Let $\mu_{\pm}$ be the symplectic partition for the wavefront set again for the $\deg P=1$ part as in Proposition \ref{p:no-unitary}(2). 

Similarly, let $\lambda^\vee_{\text{uni}}$ and $\mu_{\text{uni}}$ be the orthogonal (with an entry $1$ removed) and symplectic partitions that encode the dual nilpotent orbit and the wavefront set, respectively, in the case when all polynomials have degree strictly greater than $1$, i.e., in the setting of Proposition \ref{p:unitary}(1),(2).

Then 
\[
\bO^\vee_\pi=\lambda^\vee_{\pm}\sqcup \lambda^\vee_{\text{uni}}\text{ and }
{}^{\bar{\mathsf k}}\WF(\pi)=\mu_{\pm}+\mu_{\text{uni}}.
\]
Then
\begin{align}\label{e:combine}
d^\vee(\bO^\vee_\pi)&=d^\vee(\lambda^\vee_{\pm})+(\lambda^\vee_{\text{uni}})^t\ge \mu_{\pm}+\mu_{\text{uni}},
\end{align}
since by Propositions \ref{p:no-unitary} and \ref{p:unitary}, we know that
\[
d^\vee(\lambda^\vee_{\pm})\ge \mu_\pm\text{ and } (\lambda^\vee_{\text{uni}})^t\ge \mu_{\text{uni}}.
\]
In the case when $n_2=0$ (hyperspecial), equality holds for both, therefore for the combination.    

To justify the first equality in (\ref{e:combine}), write $\lambda^\vee_{\text{uni}}=\underline p_A\sqcup \underline p_A$, where $\underline p_A=\bigsqcup_P[m_P^{(1)}+m_P^{(2)}]^{k_P}\sqcup [|m_P^{(1)}-m_P^{(2)}|-1]^{k_P}$ in the notation of Proposition \ref{p:unitary}. Then we may regard $\mathbb O_\pi^\vee$ as the $G^\vee$-saturation of an orbit $\mathbb O^\vee_0$ on a maximal Levi subgroup of type $GL(n-m)\times SO(2m+1)$, where the nilpotent $GL$-part of $\mathbb O_0^\vee$ is given by $\underline p_A$ and the $SO$-part by $\lambda_\pm^\vee$. Using Lemma \ref{l:prop-nil}(3), it follows that
\begin{equation}
\begin{aligned}
d^\vee(\mathbb O^\vee_\pi)&=\operatorname{ind}_{GL(n-m)\times Sp(2m)}^{Sp(2n)}(\underline p_A^t,d^\vee(\lambda^\vee_\pm))=(\underline p_A^t+\underline p_A^t+d^\vee(\lambda^\vee_\pm)_C\\&=((\lambda_{\text{uni}}^\vee)^t+d^\vee(\lambda^\vee_\pm))_C=(\lambda_{\text{uni}}^\vee)^t+d^\vee(\lambda^\vee_\pm),
\end{aligned}
\end{equation}
where the last equality follows because all the parts in both partitions are even.
\end{proof}

\subsection{Unramified unitary groups} In this case, the calculation is simply that from Proposition \ref{p:unitary}, so there is nothing more to check.

\subsection{Special orthogonal groups} The calculation is similar to that for the symplectic group. The part of the calculation pertaining to the polynomials $P$ of degree strictly greater than $1$ is identical to Proposition \ref{p:unitary}. Thus, we only need to prove the analogue of Proposition \ref{p:no-unitary} for orthogonal groups.

\begin{proposition}\label{p:so} Let $\pi$ be a depth-zero supercuspidal $SO(m,\mathsf k)$-representation.    Suppose there are no endoscopic unitary factors, i.e., no polynomials $P$ with $\deg P>1$ appear. Then,
    \begin{enumerate}
    \item the nilpotent Langlands parameter $\bO^\vee_\pi$ is given by the strings (2o) and (3o) in section \ref{s:nil-L}.
    \item \[{}^{\bar{\mathsf k}}\WF(\pi)=\WF_1\sqcup \WF_2,\]
    where
    \[
    \WF_i=\begin{cases} [2(m^{(i)}_++m^{(i)}_-)+1]\sqcup [2|m^{(i)}_+-m^{(i)}_-|-1], & n_i^{an}=1,\\
   [2(m^{(i)}_++m^{(i)}_-)-1]\sqcup [2|m^{(i)}_+-m^{(i)}_-|-1] &n_i^{an}=0,2.
    \end{cases}
    \]
    \item Conjecture \ref{conj-cusp} holds in this case.
    \end{enumerate}
\end{proposition}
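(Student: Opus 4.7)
Part (1) is a direct restatement of the Lust--Stevens recipe listed as strings (2o), (3o) in Section~\ref{s:nil-L}, so nothing is to be shown. Part (2) will follow from Theorem~\ref{t:supercuspidal-WF} together with formula~(\ref{e:CMBO}) applied to the cuspidal representation $\tau_1\boxtimes\tau_2$ of the finite reductive quotient $SO(n_1+n_1^{an},n_1,\bF_q)\times SO(n_2+n_2^{an},n_2,\bF_q)$: the wavefront set is the $\bfG(\bar\sk)$-saturation of the concatenation of the Kawanaka wavefront sets of the two factors. Part (3) is then a partition calculation analogous to the symplectic case in Proposition~\ref{p:no-unitary}.

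For the factor-by-factor computation in (2), I would drop the index $i$ and work on a single special orthogonal factor. By hypothesis the only irreducible self-dual polynomials appearing are $x\pm 1$, so the semisimple parameter $s$ of the cuspidal representation has centralizer a product $SO(a_+)\times SO(a_-)$ on which the cuspidal unipotent datum is characterized by the integers $m_\pm$ via Lusztig's classification. By Lusztig's formula \cite[Theorem 11.2]{lusztig-unip-supp} the Kawanaka wavefront set is the image under the Springer correspondence for the dual group ($Sp$ if $m$ is odd, $SO$ if $m$ is even) of the $j$-induced representation from the Weyl group of the centralizer to the Weyl group of the ambient $SO(n+n^{an},n)$ (type $B_n$ if $n^{an}=1$, split or non-split $D_n$ if $n^{an}\in\{0,2\}$). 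The explicit $j$-induction formulas in classical types recalled from \cite[\S4-5]{lusztig-j-ind} reduce the computation to the partition-sum operation of Definition~\ref{d:sum-parts}, producing a bipartition of the form
\[
\bigl((1,\dots,m_+)+(1,\dots,m_-)\bigr)\times \bigl((1,\dots,m_+)+(1,\dots,m_--1)\bigr)
\]
up to parity twists, exactly parallel to the symplectic calculation in the proof of Proposition~\ref{p:no-unitary}(2). Reading this bipartition off through the Springer correspondence for the dual classical group yields the three-case formula for $\WF_i$ recorded in the statement.

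For part (3), I would compute $d^\vee(\bO^\vee_\pi)$ by transposing the orthogonal partition produced in (1) and applying the appropriate Spaltenstein collapse on the dual side. The key pointwise identity to verify is that for nonnegative integers $m^{(1)},m^{(2)}$ and $\epsilon\in\{-1,0,+1\}$ dictated by $n_1^{an},n_2^{an}$, the Spaltenstein dual of $[2(m^{(1)}+m^{(2)})+\epsilon]\sqcup[2|m^{(1)}-m^{(2)}|-1]$ dominates the corresponding $\WF$-piece, with equality whenever one of the two summands is trivial. Combining the $+$ and $-$ contributions and the two parahoric factors via the identity $(\alpha\sqcup\beta)^t=\alpha^t+\beta^t$ yields $d^\vee(\bO^\vee_\pi)\ge \WF_1\sqcup\WF_2={}^{\bar{\mathsf k}}\!\WF(\pi)$, with equality in the hyperspecial case $n_2=0$.

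The main obstacle is the case analysis indexed by $(n_1^{an},n_2^{an})$: the three sub-cases of strings (2o), (3o) correspond to three different Weyl-group types for the ambient orthogonal factor and to three different parity normalizations of the cuspidal datum on the centralizer, producing the small but essential $\pm 1$ shifts that appear in the statement. Once the dictionary between the parity of $n_i^{an}$, the Weyl-group type, the Springer correspondence for the dual group, and the Spaltenstein collapse on the dual side is set up carefully, parts (2) and (3) reduce to the same $+/\sqcup$ bookkeeping that proved Propositions~\ref{p:no-unitary}, \ref{p:unitary} and Theorem~\ref{t:symplectic}.
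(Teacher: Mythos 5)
Your overall plan coincides with the paper's: for (2), compute the Kawanaka wavefront set factor-by-factor via Lusztig's formula, reduce the $d_S^\vee$ computation to a $j$-induction, evaluate the $j$-induction via the $+$ operation of Definition~\ref{d:sum-parts}, and read off the orthogonal nilpotent orbit by the Springer algorithm; for (3), transpose and compare. However, a few of your bookkeeping claims are wrong in ways that would change the explicit answer if not caught.

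First, you assert that the semisimple parameter $s$ has centralizer $SO(a_+)\times SO(a_-)$ in all cases. This is only correct for the even orthogonal factors ($n^{an}\in\{0,2\}$), where the Deligne--Lusztig dual group of $SO(2n)$ is again of type $D$. For an odd orthogonal factor ($n^{an}=1$), the dual group of $SO(2n+1)$ is $Sp(2n)$, and the centralizer of $s$ is a product of \emph{symplectic} groups. The distinction matters: the cuspidal unipotent datum is the bipartition $(1,\dots,m)\times(1,\dots,m)$ in $W(B)$ for $Sp$, versus $(1,\dots,m)\times(1,\dots,m-1)$ in $W(D)$ for $SO(\mathrm{even})$, and this is precisely what generates the $\pm 1$ shifts separating the cases of the statement. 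Second, the bipartition you display,
\[
\bigl((1,\dots,m_+)+(1,\dots,m_-)\bigr)\times \bigl((1,\dots,m_+)+(1,\dots,m_--1)\bigr),
\]
is actually the one that occurs in the \emph{symplectic} computation (Proposition~\ref{p:no-unitary}); it matches neither orthogonal case. The correct outputs of the $j$-induction are $\bigl((1,\dots,m_+)+(1,\dots,m_-)\bigr)\times\bigl((1,\dots,m_+)+(1,\dots,m_-)\bigr)$ when $n^{an}=1$, and $\bigl((1,\dots,m_+)+(1,\dots,m_-)\bigr)\times\bigl((1,\dots,m_+-1)+(1,\dots,m_--1)\bigr)$ when $n^{an}\in\{0,2\}$. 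The hedge ``up to parity twists'' does not save this, because pinning down those twists is exactly the content of the case analysis. Finally, a small but conceptually relevant slip: the Kawanaka wavefront set $\WF(\sigma)$ is a nilpotent orbit in $\fg$ for $\mathcal{G}=SO(n+n^{an},n)$ itself, so the last Springer-correspondence step is the one for $\mathcal{G}$, not for the ``dual classical group'' as you repeatedly write. Once these three corrections are in place, your plan reproduces the paper's proof.
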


\begin{proof}
    Claim (1) is only recorded here for convenience. For (2), we need to compute the Kawanaka wavefront set of the cuspidal representations on each factor of $\CG^\circ_{n_1,n_2}$.  Consider one factor $SO(n+n^{an},n,\bF_q)$. We only need to compute the orthogonal nilpotent orbit that is attached via Springer's correspondence to the $j$-induced representation
    \[
    \sigma=\begin{cases} j_{W(B)\times W(B)}^{W(B)}\left(((m_+,\dots,2,1)\times (m_+,\dots,2,1))\boxtimes (m_-,\dots,2,1)\times (m_-,\dots,2,1))\right),& n^{an}=1,\\
    j_{W(D)\times W(D)}^{W(D)}\left(((m_+,\dots,2,1)\times (m_+-1,\dots,2,1))\boxtimes (m_-,\dots,2,1)\times (m_--1,\dots,2,1))\right),& n^{an}=0,2.
    \end{cases}
    \]
    As in the symplectic case, by the formulae in \cite{lusztig-j-ind}, this equals
    \[
    \sigma=\begin{cases}
        ((m_+,\dots,2,1)+(m_-,\dots,2,1))\times ((m_+,\dots,2,1)+(m_-,\dots,2,1)),& n^{an}=1,\\
    ((m_+,\dots,2,1)+(m_-,\dots,2,1))\times ((m_+-1,\dots,2,1)+(m_--1,\dots,2,1)),& n^{an}=0,2.
    \end{cases}
    \]
    Applying the combinatorial algorithm for the Springer representation for {\it type $B$},
we find that the corresponding {\it orthogonal} nilpotent orbit is
\[
\begin{cases} [2(m_++m_-)+1]\sqcup [2|m_+-m_-|-1], & n^{an}=1,\\
[2(m_++m_-)-1]\sqcup [2|m_+-m_-|-1],& n^{an}=0,2.
\end{cases}
\]
Then (2) follows. Claim (3) is verified by analogous computations to the symplectic case.

\end{proof}

By combining Proposition \ref{p:so} with Proposition \ref{p:unitary}, it follows, just as in the proof of Theorem \ref{t:symplectic} that Conjecture \ref{conj-cusp} holds for all depth-zero supercuspidals representations of a special orthogonal group.

\subsection{Ramified unitary groups} For the ramified unitary groups, the finite group $\CG^\circ_{n_1,n_2}(\bF_q)$  on which the cuspidal representation lives is a product of a symplectic and special orthogonal group. For notational purposes, assume $\CG^\circ_{n_1,n_2}(\bF_q)=Sp(2n_1,\bF_q)\times SO(n_2+n_2^{an},n_2,\bF_q)$. We only record the analogue of Propositions \ref{p:no-unitary} and \ref{p:so}. The proof is simply a combination of the symplectic and orthogonal cases discussed before.

\begin{proposition}\label{p:ramified} Let $\pi$ be a depth-zero supercuspidal representation of a ramified unitary group induced from $\CG^\circ_{n_1,n_2}$ as above.    Suppose there are no endoscopic unitary factors, i.e., no polynomials $P$ with $\deg P>1$ appear. Then, 
    \begin{enumerate}
    \item the nilpotent Langlands parameter $\bO^\vee_\pi$ is given by the strings (2r) and (3r) in section \ref{s:nil-L}.
    \item \[{}^{\bar{\mathsf k}}\WF(\pi)=\WF_1\sqcup \WF_2,\]
    where
    \[
    \WF_1=[2(m_+^{(1)}+m_-^{(1)})]\sqcup \begin{cases}[2(m_+^{(1)}-m_-^{(1)})],&m_+^{(1)}\ge m_-^{(1)},\\
[2(m_-^{(1)}-m_+^{(1)}-1)],&m_+^{(1)}< m_-^{(1)}
\end{cases}
    \]
   and
    \[
    \WF_2=\begin{cases} [2(m^{(2)}_++m^{(2)}_-)+1]\sqcup [2|m^{(2)}_+-m^{(2)}_-|-1], & n_2^{an}=1,\\
   [2(m^{(2)}_++m^{(2)}_-)-1]\sqcup [2|m^{(2)}_+-m^{(2)}_-|-1] &n_2^{an}=0,2.
    \end{cases}
    \]
    \item Conjecture \ref{conj-cusp} holds in this case.
    \end{enumerate}
    
\end{proposition}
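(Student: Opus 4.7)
The plan is to follow verbatim the template set by the symplectic case (Theorem \ref{t:symplectic}) and the special orthogonal case (Proposition \ref{p:so}), since the ramified unitary setting is just an amalgam of those two. Claim (1) is tautological, being a direct transcription of the Lust–Stevens algorithm recorded in section \ref{s:nil-L} under the assumption $\deg P = 1$.

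For claim (2), I would combine Theorem \ref{t:supercuspidal-WF} and formula \eqref{e:CMBO} with the observation that the reductive quotient is a direct product $Sp(2n_1,\bF_q)\times SO(n_2+n_2^{an},n_2,\bF_q)$, and $\tau_1\boxtimes\tau_2$ is a box product. Hence the Kawanaka wavefront set splits as a product of the wavefront sets on the two factors, and the geometric wavefront set over $\bar\sk$ is the $\bfG(\bar\sk)$-saturation, i.e., the concatenation of the two partitions. The contribution from the $Sp(2n_1)$ factor is computed by the same type $C$ $j$-induction as in the proof of Proposition \ref{p:no-unitary}, producing the symplectic partition $\WF_1$. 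The contribution from the $SO$ factor is computed by the type $B$ $j$-induction (when $n_2^{an}=1$) or the type $D$ $j$-induction (when $n_2^{an}=0,2$) as in the proof of Proposition \ref{p:so}, producing the orthogonal partition $\WF_2$. Concatenating gives $\WF_1\sqcup\WF_2$.

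For claim (3), write $\lambda^\vee_{\pm}$ for the partition describing $\bO^\vee_\pi$ via the strings (2r) and (3r) and set $\mu_\pm=\WF_1\sqcup\WF_2$. Using the compatibility $(\alpha\sqcup\beta)^t=\alpha^t+\beta^t$ from \eqref{compatibility}, compute $d^\vee(\lambda^\vee_\pm)$ string-by-string and check $\mu_\pm\le d^\vee(\lambda^\vee_\pm)$ by the same transposition/string-length comparison that already worked in Propositions \ref{p:no-unitary} and \ref{p:so}; the verification splits into the two parity regimes of $n_2^{an}$ but is routine in each. Finally, to incorporate the endoscopic unitary polynomials $P$ with $\deg P>1$, add the pieces $\lambda^\vee_{\text{uni}}$ and $\mu_{\text{uni}}$ from Proposition \ref{p:unitary} and repeat, verbatim, the closing argument in the proof of Theorem \ref{t:symplectic}, which uses that $d^\vee(\lambda^\vee_{\text{uni}})=\mu_{\text{uni}}$ and the subadditivity of $d^\vee$ under $\sqcup$.

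The only real obstacle is the bookkeeping created by the parity-dependent shifts by $\pm 1$ appearing in (2r) and (3r). These shifts perturb the strings relative to both the pure symplectic case (Proposition \ref{p:no-unitary}) and the pure orthogonal case (Proposition \ref{p:so}), so one must check four parity combinations for the $+$ and $-$ contributions to confirm that the string-by-string inequality still goes the right way; however, in each case the check is an elementary comparison of the transposes of arithmetic progressions $[m_{\pm}^{(1)}+m_{\pm}^{(2)}]$ and $[|m_{\pm}^{(1)}-m_{\pm}^{(2)}|-1]$ against $[2(m_{\pm}^{(1)}+m_{\pm}^{(2)})\pm\varepsilon]$ with $\varepsilon\in\{-1,0,1\}$, so no new ideas are needed beyond those already developed in Propositions \ref{p:no-unitary} and \ref{p:so}.
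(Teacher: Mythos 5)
Your proposal is correct and takes essentially the same route the paper does: the paper's own treatment of this proposition is exactly the one-line remark that ``the proof is simply a combination of the symplectic and orthogonal cases discussed before,'' i.e.\ compute the Kawanaka wavefront set on each factor of $Sp(2n_1,\bF_q)\times SO(n_2+n_2^{an},n_2,\bF_q)$ via $j$-induction (type $C$ for the first, type $B$ or $D$ depending on $n_2^{an}$ for the second), concatenate the resulting partitions, and compare string-by-string with $d^\vee(\bO^\vee_\pi)$ using $(\alpha\sqcup\beta)^t=\alpha^t+\beta^t$. Your write-up fills in these details faithfully; the only thing slightly beyond the stated proposition is your last step incorporating polynomials with $\deg P>1$, which in the paper is treated separately (by an implicit appeal to the argument of Theorem \ref{t:symplectic}) rather than as part of this proposition.
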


\section{\bf Tempered $L$-packets of ${\bfG_2}$}\label{s:G2} 

Langlands correspondences for $G_2$ were recently constructed in \cite{AX22} and \cite{GS1,GS23} when the characteristic of $\sk$ is zero. We expect that their constructions coincide. 

Here, we prove Conjecture \ref{conj-main-2}
using the explicit description of $L$-packets in \cite{GS1, GS23} when $\sk$ has characteristic 0.  We also compute the unramified wavefront sets of depth zero tempered representations of $G_2$ using the description of Langlands parameters in \cite{AX22}. 

\subsection{The verification of the main conjectures for $G_2$} Using the explicit correspondence and the results on the wavefront sets of $G_2$ from \cite{JLS} and \cite{LoSa}, we can verify our main conjectures. Recall that $G_2$ has five geometric orbits labelled in the Bala-Carter classification, as follows (in the closure order):
\[
\Ureg,\quad \Usreg, \quad \Usr,\quad \Ulr, \quad \Utr.
\]

\begin{lemma}\label{lemma: wf subreg} If $\pi\in\Irr\,G_2$ is a nontrivial non-generic, then ${}^{\bar{\mathsf k}}\WF(\pi)=\Usreg.$ 
\end{lemma}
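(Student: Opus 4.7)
The plan is to exploit the fact that the five nilpotent orbits of $G_2$ form a single chain under the closure order,
\[
\Utr \;<\; \Ulr \;<\; \Usr \;<\; \Usreg \;<\; \Ureg,
\]
together with the explicit wavefront-set determinations of \cite{JLS} and \cite{LoSa}. Since ${}^{\bar{\mathsf k}}\WF(\pi)$ is by definition an antichain of orbits, in $G_2$ it collapses to a single orbit $\mathcal O$, and the task reduces to excluding $\mathcal O \in \{\Utr,\Ulr,\Usr,\Ureg\}$ under the hypothesis that $\pi$ is neither trivial nor generic.

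Two of these exclusions are essentially formal. The value $\mathcal O = \Ureg$ is equivalent to genericity of $\pi$ by Rodier's theorem (the regular orbit appears in the wavefront set iff $\pi$ carries a Whittaker model), hence is ruled out by hypothesis. The value $\mathcal O = \Utr$ forces the leading term of the Harish-Chandra--Howe local character expansion to be a scalar multiple of the Fourier transform of the delta at $0$, which for an irreducible admissible representation of a semisimple $p$-adic group characterises the trivial representation, again excluded by hypothesis.

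The substantive step is to rule out $\mathcal O \in \{\Ulr,\Usr\}$. For this I would invoke \cite{JLS,LoSa}, which together determine ${}^{\bar{\mathsf k}}\WF(\pi)$ for every irreducible admissible $G_2(\sk)$-representation and verify that it always lies in $\{\Utr,\Usreg,\Ureg\}$. On the tempered side, one goes through the L-packets of \cite{AX22}; the depth-zero supercuspidal constituents are handled via Theorem \ref{t:supercuspidal-WF} together with Kawanaka-wavefront computations on the finite reductive quotients attached to the maximal parahorics, while the remaining tempered representations are treated via \eqref{e:ind-WF} and Lemma \ref{l:prop-nil}. Non-tempered Langlands quotients are controlled by the same induction principle; since every proper Levi of $G_2$ has semisimple rank at most one, the Lusztig--Spaltenstein induction of any orbit on a Levi either gives $\Utr$ or already saturates at least to $\Usreg$, so no representation parabolically induced from a proper Levi can have wavefront $\Ulr$ or $\Usr$.

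The main obstacle will be this last exclusion, since $\Usr$ is special in $G_2$ and a priori permitted (and the examples of \cite{tsai} show that non-special orbits are not automatically excluded either). The content of \cite{JLS,LoSa} is precisely that the ``minimal-type'' representations that could carry such small wavefront sets do not exist for $p$-adic $G_2$ apart from the trivial representation itself; once that input is granted, the lemma is immediate.
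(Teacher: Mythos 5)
Your overall plan matches the paper's proof: both cite \cite{JLS,LoSa} to rule out the intermediate orbits and use the standard characterizations (regular orbit iff generic; trivial orbit iff finite-dimensional, hence trivial for $G_2$) for the two extremes. Two cautions, though. First, the parabolic-induction elaboration you sketch does not actually close the gap it purports to: the identity ${}^{\bar \sk}\WF(i_{P}^{G}\sigma)=\ind_{\mathbf M}^{\mathbf G}({}^{\bar \sk}\WF(\sigma))$ controls the wavefront set of the \emph{full} induced representation and yields only an \emph{upper bound} for its irreducible subquotients; so knowing that Lusztig--Spaltenstein induction from a proper Levi of $G_2$ lands in $\{\Usreg,\Ureg\}$ does not by itself preclude an irreducible subquotient from having wavefront set $\Ulr$ or $\Usr$. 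Ruling that out is precisely the non-trivial content of \cite{JLS,LoSa}, not something recoverable from the induction principle alone. Second, you lump $\Ulr$ and $\Usr$ together under ``minimal-type,'' but the two references play distinct roles in the paper's proof: the table in \cite[\S12]{JLS} already excludes $\Usr=\widetilde A_1$ as a possible geometric wavefront set, while \cite[Theorem 1.2]{LoSa} on minimal representations is what excludes the minimal orbit $\Ulr=A_1$ specifically. Once the two references are invoked for those respective exclusions, the lemma follows as you say.
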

\begin{proof}
By \cite[\S12 and Table $G_2$, p.~437]{JLS}, the only possible geometric wavefront sets in $G_2$ are $\Ureg$, $\Usreg$, $\Ulr$, and $\Utr$. By \cite[Theorem 1.2]{LoSa}, the minimal orbit $\Ulr$ does not occur as a wavefront set. Hence $\Ureg$, $\Usreg$, and $\Utr$ are the only possibilities. The wavefront set of $\pi$ is the regular orbit $\Ureg$ if and only if $\pi$ is generic. On the other hand, the wavefront set is $1$ if and only if $\pi$ is an unramified character, hence trivial. 
\end{proof}

\begin{theorem} Let $\pi$ be a tempered irreducible representation of $G_2$. Let $\bO_\pi^\vee\in \left\{\Ureg,\Usreg,\Usr,\Ulr,\Utr \right\}$ be the nilpotent part of the Langlands parameter of $\pi$.
\begin{enumerate}
\item Suppose $\pi\ne\St_{G_2}$. If $\bO_\pi^\vee$ is not the trivial orbit, $\AZ(\pi)$ is not generic and $^{\bar\sk}\WF(\AZ(\pi))=\Usreg.$
\item If $\pi=\St_{G_2}$, we have $\bO^\vee_\pi=\Ureg$, $\AZ(\pi)=1_{G_2}$ and $^{\bar\sk}\WF(\AZ(\pi))=1$.
\item If $\bO^\vee_\pi$ is trivial, $^{\bar\sk}\WF(\AZ(\pi))$ is either $\Ureg$ or $\Usreg$.
\end{enumerate}
\end{theorem}

\begin{proof} (2) and (3) are straightforward.

Now suppose $\pi\ne\St_{G_2}$ and $\bO_\pi^\vee\neq1$. We fix a maximal $\sk$-split torus $T$, a Borel subgroup $B$ with $T\subset B$. Let $\Delta= \left\{\alpha,\beta \right\}$ be the set of simple roots associated to $B$ where $\alpha$ is a short root.  Let $P_\gamma=M_\gamma U_\gamma$, $\gamma\in\Delta\cup \left\{\emptyset \right\}$ be the parabolic subgroup associated to $\gamma$. 
We identify $T$ with $\sk^\times\times \sk^\times$ via $M_\alpha\simeq GL_2$. We also adopt the notation from \cite{GS1}:

\begin{itemize}
\item $I(s_1,s_2,\chi_1,\chi_2)=i_B^G(\nu_\sk^{s_1}\chi_1\otimes\nu_\sk^{s_2}\chi_2)$
where $\chi_i$ is a character of $\sk^\times$ and $\nu_\sk=|\cdot|_\sk.$ 
\item $I_{P_\gamma}(s,\tau)=i_{P_\gamma}^G(\nu^s\otimes\tau)$ where $\nu=\nu_\sk\circ\det$ and $\tau$ is a representation of $M_\gamma\simeq GL_2$.
\item $J_R(\cdots)$, where $R$ is a parabolic subgroup, denotes a Langlands quotient of $I_R(\cdots)$.
\item $G:=G_2$, $P:=P_\alpha$, $Q:=P_\beta$
\end{itemize}

To calculate $\AZ$ duals explicitly, we use various calculations in the classification of unitary dual of $G_2$ by Muic (\cite{Muic}). These are also summarized in \cite[Propositions 3.1, 3.2]{GS1}. We also need the following facts for the calculation:

\begin{itemize}
\item [(i)] The $\AZ$ dual of an irreducible representation $\pi$ is irreducible.
\item [(ii)] The $\AZ$ dual map is compatible with parabolic inductions, that is, $i_{P}^G\AZ(\tau)=\AZ(i_{P}^G\tau)$, where $\tau$ is an admissible representation of $M$.
\item [(iii)] If $\pi$ is irreducible, $\pi$ and $\AZ(\pi)$ have the same supercuspidal support. Hence, if $\tau$ is a supercuspidal representation of $M_\gamma$, $\AZ(i_{P_\gamma}^G\tau)=i_{P_\gamma}^G\tau$ in the Grothendieck group $K(G_2)$ of smooth representations.
\item [(iv)] Lemma \ref{lemma: wf subreg}. 
\end{itemize}

\medskip

We now explain the calculation of the $\AZ$ map. Since subquotients of the parabolically induced presentations are preserved by the $\AZ$ map, we can consider the $\AZ$ map restricted to a finite set of representations with the same cuspidal support. We will show two cases since the rest of calculation is similar: In the following, $=$, $+$ are operations in $K(G_2)$.

\medskip

\noindent
{\bf Case 1.} Subquotients of $I(1,0,1_{\sk^\times},1_{\sk^\times})$: 
By \cite[Prop. 3.1-3.2]{GS1}, we have in $K(G_2)$
\[ 
\begin{array}{lcl}I(1,0,1_{\sk^\times},1_{\sk^\times})&=&I_P(\frac12,\delta(1))+I_P(\frac12,1_{GL_2})\\
&=& I_Q(\frac12, \delta(1))+I_Q(\frac12,1_{GL_2})\\
I_P(\frac12,\delta(1))
&=& \pi_{\textup{gen}}(1)+J_P(\frac12,\delta(1))+J_Q(\frac12,\delta(1))\\
I_P(\frac12,1_{GL_2})
&=& \pi_{\textup{deg}}(1)+ J_Q(1,\pi(1,1))+ J_Q(\frac12,\delta(1))\\
I_Q(\frac12, \delta(1))
&=& \pi_{\textup{gen}}(1)+\pi_{\textup{deg}}(1)+ J_Q(\frac12,\delta(1))\\
I_Q(\frac12,1_{GL_2})
&=& J_Q(\frac12,\delta(1))+J_Q(1,\pi(1,1))+J_P(\frac12,\delta(1))
\end{array}
\]
where $\delta(1)$ is the Steinberg representation of $GL_2$. Since $\AZ_{GL_2}(\delta(1))=1_{GL_2}$,
we have
\[
\begin{array}{lccl}
\AZ: & \left\{\pi_{\textup{gen}}(1),\ J_P(\frac12,\delta(1)),\ J_Q(\frac12,\delta(1))\right\}
&\Leftrightarrow &
\left\{\pi_{\textup{deg}}(1),\  J_Q(1,\pi(1,1)),\  J_Q(\frac12,\delta(1))\right\}\\
&\left\{\pi_{\textup{gen}}(1),\ \pi_{\textup{deg}}(1),\  J_Q(\frac12,\delta(1))\right\}&\Leftrightarrow&
\left\{J_Q(\frac12,\delta(1)),\ J_Q(1,\pi(1,1)),\ J_P(\frac12,\delta(1))\right\}
\end{array}
\]
Then, the only possible matching is
\[
\pi_{\textup{gen}}(1)\overset\AZ\leftrightarrow 
J_Q(1,\pi(1,1)),\quad
\pi_{\textup{deg}}(1)\overset\AZ\leftrightarrow 
J_P(1/2,\delta(1)), \quad
J_Q(1/2,\delta(1))\overset\AZ\leftrightarrow 
J_Q(1/2,\delta(1))
\]

\medskip

\noindent
{\bf Case 2.} Subquotients of $I(1,0,\chi,\chi)=I(1,0,\chi^{-1},\chi^{-1})$ with $\chi^3=1$: The above method gives only 
\[
\AZ:  \left\{\pi_{\textup{gen}}(\chi)=\pi_{\textup{gen}}(\chi^{-1}), J_P(1/2,\delta(\chi) \right\}\ \Leftrightarrow\ 
\left\{J_P(1/2,\delta(\chi^{-1}),J_Q(1/2,\pi(\chi,\chi^{-1})) \right\}
\]
However, using $I(1,0,\chi,\chi)=I(1,0,\chi^{-1},\chi^{-1})$, we can nail it down to
\[
\pi_{\textup{gen}}(\chi)\overset\AZ\leftrightarrow
J_Q(1/2,\pi(\chi,\chi^{-1})),\qquad
J_P(1/2,\delta(\chi)\overset\AZ\leftrightarrow
J_P(1/2,\delta(\chi^{-1}))
\]
{The above calculation is summarized in Table \ref{t:G2-complete}. The calculation of $\bO_\pi^\vee$ in the table is from \cite[\S3.5]{GS1}.
The proof of the theorem is now complete.
}
\end{proof}

\noindent
\begin{table}[h]
\begin{tabular}{|c|c|c|c|c|c|c|}
\hline
& $\pi$ & $\bO_\pi^\vee$ & $d^\vee(\bO_\pi^\vee)$ & $\AZ(\pi)$ & $^{\bar\sk}\WF(\AZ(\pi))$ & reference in \cite{GS1}\\
\hline\hline
\multirow{4}{*}{$I(2,1,1_{\sk^\times},1_{\sk^\times})$}&
$\St_{G_2}$ & $\Ureg$ & $\Utr$ & $1_{G_2}$& $\Utr$&
\multirow{17}{*}{\begin{tabular}{c}Prop. 3.1 (ii)-(iii)\\Prop. 3.2 (ii)-(iii)\\\end{tabular}}\\
\cline{2-6} 
& $J_Q(\frac52,\delta(1))$ & $\Usr$ & $\Usreg$ & $J_P(\frac32,\delta(1))$ & $\Usreg$ & \\
\cline{2-6} 
& $J_P(\frac32,\delta(1))$ & $\Ulr$ & $\Usreg$ & $J_Q(\frac52,\delta(1))$ & $\Usreg$ & \\
\cline{2-6} 
& $1_{G_2}$ & $\Utr$ & $\Ureg$ & $\St_{G_2}$ & $\Ureg$ & \\
\cline{1-6}
\multirow{5}{*}{$I(1,0,1_{\sk^\times},1_{\sk^\times})$}&
$\pi_{\textup{gen}}(1)$ & $\Usreg$ & $\Usreg$ & $J_Q(1,\pi(1,1))$& $\Usreg$& \\
\cline{2-6} 
&$\pi_{\textup{deg}}(1)$ & $\Usreg$ & $\Usreg$ & $J_P(\frac12,\delta(1))$& $\Usreg$& \\
\cline{2-6}
& $J_Q(\frac12,\delta(1))$ & $\Usr$ & $\Usreg$ & $J_Q(\frac12,\delta(1))$ & $\Usreg$& \\
\cline{2-6}
& $J_P(\frac12,\delta(1))$ & $\Ulr$ & $\Usreg$ & $\pi_{\textup{deg}}(1)$ & $\Usreg$& \\
\cline{2-6}
& $J_Q(1,\pi(1,1))$ & $\Utr$ & $\Ureg$ & $\pi_{\textup{gen}}(1)$ & $\Ureg$& \\
\cline{1-6} 
\multirow{4}{*}{$\begin{array}{c}I(1,0,\chi\otimes\chi)\\\chi^2=1\end{array}$}
& $\pi_{\textup{gen}}(\chi)$ & $\Usreg$ & $\Usreg$ & 
$J_Q(\frac12,\pi(1,\chi))$ & $\Usreg$ & \\
\cline{2-6}
& $J_Q(\frac12,\delta(\chi)$ & $\Usr$ & $\Usreg$ & $J_P(\frac12,\delta(\chi))$ & $\Usreg$ & \\
\cline{2-6}
& $J_P(\frac12,\delta(\chi)$ & $\Ulr$ & $\Usreg$ & $J_Q(\frac12,\delta(\chi))$ & $\Usreg$ & \\
\cline{2-6}
& $J_Q(\frac12,\pi(1,\chi))$& $\Utr$ & $\Ureg$ & $\pi_{\textup{gen}}(\chi)$ & $\Ureg$ & \\
\cline{1-6} 
\multirow{4}{*}{$\begin{array}{c}I(1,0,\chi\otimes\chi)\\\chi^3=1\end{array}$}
& $\pi_{\textup{gen}}(\chi)$ & $\Usreg$ & $\Usreg$ & 
$J_Q(\frac12,\pi(1,\chi))$ & $\Usreg$ & \\
\cline{2-6}
& $J_P(\frac12,\delta(\chi)$ & $\Ulr$ & $\Usreg$ & $J_P(\frac12,\delta(\chi^{-1}))$ & $\Usreg$ & \\
\cline{2-6}
& $J_P(\frac12,\delta(\chi^{-1})$ & $\Ulr$ & $\Usreg$ & $J_Q(\frac12,\delta(\chi))$ & $\Usreg$ & \\
\cline{2-6}
& $J_Q(1,\pi(\chi,\chi^{-1}))$& $\Utr$ & $\Ureg$ & $\pi_{\textup{gen}}(\chi)$ & $\Ureg$ & \\
\hline
$i_B^G\chi$ irred.& $i_B^G\chi$& $\Utr$ & $\Ureg$ & $i_B^G\chi$& $\Ureg$ & \\
\hline\hline
\multirow{2}{*}{$\begin{array}{c}I_P(\frac12,\tau)\\ \tau\simeq\tau^\vee,\ \omega_\tau=1\end{array}$} & $\delta_P(\tau)$ & $\Ulr$ & $\Usreg$ & $J_P(\frac12,\tau)$ & $\Usreg$ & \multirow{2}{*}{Prop. 3.1-(i)}\\
\cline{2-6}
& $J_P(\frac12,\tau)$ & $\Utr$& $\Ureg$ & $\delta_P(\tau)$ & $\Ureg$ & \\
\hline
\multirow{2}{*}{$\begin{array}{c}I_P(0,\tau), \tau\simeq\tau^\vee\\ \omega_\tau\neq1\end{array}$} & $I_P(\tau)_{\textup{gen}}$ & $\Utr$ & $\Ureg$ & $I_P(\tau)_{\textup{deg}}$ & $\Usreg$ & \multirow{2}{*}{Prop. 3.1-(ii)}\\
\cline{2-6}
& $I_P(\tau)_{\textup{deg}}$ & $\Utr$& $\Ureg$ & $I_P(\tau)_{\textup{gen}}$ & $\Ureg$ & \\
\hline
$i_P^G\tau$ irred.& $i_P^G\tau$& $\Utr$ & $\Ureg$ & $i_P^G\tau$& $\Ureg$ & \\
\hline\hline
\multirow{2}{*}{$\begin{array}{c}I_Q(\frac12,\tau)\\ \tau\simeq\tau^\vee,\ \omega_\tau=1\end{array}$} & $\delta_Q(\tau)$ & $\Usr$ & $\Usreg$ & $J_Q(\frac12,\tau)$ & $\Usreg$ & \multirow{2}{*}{Prop. 3.2-(i)}\\
\cline{2-6}
& $J_Q(\frac12,\tau)$ & $\Utr$& $\Ureg$ & $\delta_Q(\tau)$ & $\Ureg$ & \\
\hline
\multirow{2}{*}{$\begin{array}{c}I_Q(1,\tau), \ \tau\simeq\tau^\vee\\ \tau\textrm{ dihedral},\ \omega_\tau\neq1\end{array}$} & $\pi_{\textup{gen}}(\tau)$ & $\Usreg$ & $\Usreg$ & $J_Q(1,\tau)$ & $\Usreg$ & \multirow{2}{*}{Prop. 3.2-(ii)}\\
\cline{2-6}
& $J_Q(1,\tau)$ & $\Utr$& $\Ureg$ & $\pi_{\textup{gen}}(\tau)$ & $\Ureg$ & \\
\hline
\multirow{2}{*}{$\begin{array}{c}I_Q(0,\tau), \tau\simeq\tau^\vee\\ \omega_\tau\neq1\end{array}$} & $I_Q(\tau)_{\textup{gen}}$ & $\Utr$ & $\Ureg$ & $I_Q(\tau)_{\textup{deg}}$ & $\Usreg$ & \multirow{2}{*}{Prop. 3.2-(iii)}\\
\cline{2-6}
& $I_Q(\tau)_{\textup{deg}}$ & $\Utr$& $\Ureg$ & $I_Q(\tau)_{\textup{gen}}$ & $\Ureg$ & \\
\hline
$i_Q^G\tau$ irred.& $i_Q^G\tau$& $\Utr$ & $\Ureg$ & $i_Q^G\tau$& $\Ureg$ & \\
\hline\hline
 {$\pi$, unipotent s.c.} & {$\pi$} &  {$\Usreg$}
&  {$\Usreg$} &  {$\pi$} &  {$\Usreg$} &  {\S3.5-(2)}\\ 
\hline
 {$\pi$, singular s.c.} & {$\pi$} &  {$\Usr,\ \Ulr$}
&  {$\Usreg$} &  {$\pi$} &  {$\Usreg$} &  {\S3.5-(3),(4)}\\ 
\hline
 {$\pi$, nonsingular s.c.}&  {$\pi$} &  {$\Utr$} &  {$\Ureg$} &  {$\pi$} & $\Usreg$ or $\Ureg$ &  {\S3.5-(5)}\\
\hline
\end{tabular}

\medskip

\caption{Irreducible tempered representations of $G_2$ and their $\AZ$-duals}\label{t:G2-complete}
\end{table}


\begin{corollary}
Conjecture \ref{conj-main-2}-(1) holds for any irreducible tempered representation $\pi$ on $G_2$.
\end{corollary}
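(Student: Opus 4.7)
My plan is to deduce Conjecture \ref{conj-main-2} directly from Table \ref{t:G2-complete} together with the explicit description of tempered L-packets from \cite{AX22,GS1,GS23}. The key computational input is Spaltenstein duality on the five nilpotent orbits of $\fg_2$ (identifying $G_2^\vee$ with $G_2$):
\[
d^\vee(\Utr)=\Ureg,\quad d^\vee(\Ulr)=d^\vee(\Usr)=d^\vee(\Usreg)=\Usreg,\quad d^\vee(\Ureg)=\Utr.
\]

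For part (1) of the conjecture I would compare the columns $\bO_\pi^\vee$, $d^\vee(\bO_\pi^\vee)$, and $\WF(\AZ(\pi))$ of Table \ref{t:G2-complete} row by row. In every row with $\bO_\pi^\vee\in\{\Ulr,\Usr,\Usreg\}$, both $d^\vee(\bO_\pi^\vee)$ and $\WF(\AZ(\pi))$ equal $\Usreg$ by the preceding theorem, so the inequality ${}^{\bar\sk}\WF(\AZ(\pi))\le d^\vee(\bO_\pi^\vee)$ is in fact an equality. When $\bO_\pi^\vee=\Utr$, the dual $d^\vee(\bO_\pi^\vee)=\Ureg$ dominates every nilpotent orbit, so the inequality is automatic; and the single case $\bO_\pi^\vee=\Ureg$ (the Steinberg representation) is a direct check against the table. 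This settles part (1) for every tempered $\pi$.

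For part (2), I need to show every tempered L-packet $\Pi(\varphi)$ contains a member attaining equality. If $\bO_\varphi^\vee\in\{\Ulr,\Usr,\Usreg,\Ureg\}$, then every member of $\Pi(\varphi)$ already attains equality by the analysis above, so any $\pi\in\Pi(\varphi)$ works. If $\bO_\varphi^\vee=\Utr$, then $d^\vee(\bO_\varphi^\vee)=\Ureg$, and I need to exhibit a $\pi\in\Pi(\varphi)$ whose $\AZ$-dual is generic (equivalently, whose $\AZ$-dual has wavefront set $\Ureg$). Using the Langlands correspondence of \cite{AX22,GS1,GS23} to partition the rows of the table into L-packets, one identifies in each such packet a row where $\WF(\AZ(\pi))=\Ureg$: for example, $J_Q(1,\pi(1,1))$ inside the subquotients of $I(1,0,1_{\sk^\times},1_{\sk^\times})$, $I_P(\tau)_{\textup{deg}}$ or $I_Q(\tau)_{\textup{deg}}$ in the parabolic-induction packets, and the generic member of a nonsingular depth-zero supercuspidal packet (which exists by the generic packet conjecture for $G_2$ established via \cite{GS1,GS23}).

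The principal obstacle is matching the rows of Table \ref{t:G2-complete}, which is organized by induction data and $\AZ$-duality calculations, to the intrinsically defined L-packets $\Pi(\varphi)$. This matching is supplied by the explicit Langlands correspondence of \cite{AX22,GS1,GS23}, which specifies precisely which irreducible subquotients of a given parabolically induced representation, respectively which supercuspidals, share a common L-parameter. Once this identification is fixed, both parts of the conjecture reduce to a finite inspection of Table \ref{t:G2-complete}, grouped by packet, with the only nontrivial grouping being the singular and nonsingular depth-zero supercuspidal packets at the bottom of the table.
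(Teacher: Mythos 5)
Your approach is the same as the paper's: the corollary is read off from the preceding theorem and Table \ref{t:G2-complete}, using the Spaltenstein duality values $d^\vee(\Utr)=\Ureg$, $d^\vee(\Ulr)=d^\vee(\Usr)=d^\vee(\Usreg)=\Usreg$, $d^\vee(\Ureg)=\Utr$. The paper's proof is one sentence; you have fleshed out the same bookkeeping. One small slip in your treatment of part (2): $J_Q(1,\pi(1,1))$ is a Langlands quotient at a strictly positive exponent and is therefore not tempered, so it does not lie in any tempered L-packet. In the block $I(1,0,1_{\sk^\times},1_{\sk^\times})$, the tempered constituents are $\pi_{\textup{gen}}(1)$ and $\pi_{\textup{deg}}^r(1)$, both of which have $\bO_\pi^\vee=\Usreg$ (not $\Utr$) and hence are already covered by the equality case; that block contributes no tempered L-packet with $\bO^\vee_\varphi=\Utr$. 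The genuine sources of tempered packets with trivial nilpotent part are the irreducible tempered principal series $i_B^G\chi$, the $s=0$ induced packets $\{I_P(\tau)_{\textup{gen}},I_P(\tau)_{\textup{deg}}\}$ and $\{I_Q(\tau)_{\textup{gen}},I_Q(\tau)_{\textup{deg}}\}$, the irreducible $i_P^G\tau$ and $i_Q^G\tau$, and the nonsingular supercuspidal packets; in each, one exhibits the member with $\WF(\AZ(\pi))=\Ureg$ exactly as you indicate for the remaining examples. With that correction, your verification matches the paper's.
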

\begin{proof}
Conjecture \ref{conj-main-2}-(1) is immediate from Table \ref{t:G2-complete}. 
\end{proof}

\begin{corollary}
For the $L$-packets in \cite{GS1, GS23}, 
Conjecture \ref{conj-main-2}-(2) holds. 
\end{corollary}

\begin{proof}
Write $\varphi:WD_\sk\longrightarrow {}^L\!G$ for a Langlands parameter where $WD_\sk=W_\sk\times SL_2(\bC)$. Conjecture \ref{conj-main-2}-(2) results from case by case checking summarized in Table \ref{t:G2-L-packet}. In the following table, $\bO_\varphi^\vee$ and $\Pi_\varphi$ denote the nilpotent orbit and the $L$-packet associated to $\varphi$ respectively. 
\begin{table}[h]
\begin{tabular}{|c|c|c|c|c|c|c|}
\hline
\multirow{2}{*}{$\bO_\varphi$} &  \multirow{2}{*}{$d^\vee(\bO_\varphi^\vee)$} & \multirow{2}{*}{$\varphi(W_\sk)$} & \multirow{2}{*}{$\Pi_\varphi$}  & \multirow{2}{*}{$\AZ(\Pi_\varphi)$} & $\pi$: $^{\bar\sk}\WF(\AZ(\pi))$ & {reference on $\varphi$}\\&&&&& $=d^\vee(\bO_\varphi^\vee)$ & in \cite{GS1}\\
\hline\hline
$\Ureg$ & 1 & 1 & $ \left\{\St_{G_2} \right\}$ & $1_{G_2}$ & $\Pi_\varphi$ & \S3.5-(1)\\
\hline
\multirow{7}{*}{$\Usreg$} & \multirow{7}{*}{$\Usreg$}& \multirow{2}{*}{1} 
& \multirow{2}{*}{$ \left\{\pi_{\textup{gen}}(1),\pi_{\textup{deg}}(1),\pi(1) \right\}$} & \multirow{2}{*}{$\begin{array}{l}\{\ J_Q(1,\pi(1,1))\\ \quad J_P(\frac12,\delta(1)), \pi(1)\ \}\end{array}$}  &\multirow{2}{*}{$\Pi_\varphi$} & \multirow{7}{*}{\S3.5-(2)}\\ &&&&&& \\
\cline{3-6}
&& \multirow{2}{*}{$\mu_2$} & \multirow{2}{*}{$\begin{array}{c}\{
\ \pi_{\textup{gen}}(\chi)\textrm{ with }\chi^2=1,\\\quad\pi(-1) \textrm{ unipotent s.c.}\ \}\end{array}$}& \multirow{2}{*}{$ \left\{J_Q(\frac12,\pi(1,\chi)),\pi(-1) \right\}$} & \multirow{2}{*}{$\Pi_\varphi$} &\\ &&&&&&\\
\cline{3-6}
&& \multirow{2}{*}{$\mu_3$} & \multirow{2}{*}{$\begin{array}{l}\{\ \pi_{\textup{gen}}(\chi)\textrm{ with }\chi^3=1,\\\pi(\omega),\pi(\omega^2): \textrm{ unip. s.c.}\ \}\end{array}$}& \multirow{2}{*}{$\begin{array}{l}\{\ J_Q(\frac12,\pi(1,\chi)),\\\quad \pi(\omega),\pi(\omega^2)\ \}\end{array}$} & \multirow{2}{*}{$\Pi_\varphi$} &\\ &&&&&&\\
\cline{3-6}
&& $S_3$ & $\pi_{\textup{gen}}(\tau)$ & $J_Q(1,\tau)$ & {$\Pi_\varphi$} &\\
\hline
\multirow{2}{*}{$\Usr$} & \multirow{2}{*}{$\Usreg$} &  {}
&  {$ \left\{\delta_P(\tau),\pi(-1) \right\}$} &  \multirow{2}{*}{$ \left\{J_P(\frac12,\tau),\pi(-1) \right\}$} &  \multirow{2}{*}{$\Pi_\varphi$} &  \multirow{2}{*}{\S3.5-(3)}\\ &&& $\pi(-1)$ singular s.c. &&&\\
\hline
 \multirow{2}{*}{$\Ulr$} & \multirow{2}{*}{$\Usreg$} &  {}
&  {$ \left\{\delta_Q(\tau),\pi(-1)  \right\}$} &  \multirow{2}{*}{$ \left\{J_P(\frac12,\tau),\pi(-1) \right\}$}  &  \multirow{2}{*}{$\Pi_\varphi$} &  \multirow{2}{*}{\S3.5-(4)}\\ &&& $\pi(-1)$ singular s.c. &&&\\
\hline
\multirow{3}{*}{$1$} &\multirow{3}{*}{$\Ureg$} & \multirow{3}{*}{}& {$\left\{i_B^G\chi, \textrm{ irred}\right\}$},  & \multirow{3}{*}{$\Pi_\varphi$} & \multirow{3}{*}{$\begin{array}{l}\textrm{generic}\\\textrm{ element}\\\ \textrm{\ in }\Pi_\varphi\end{array}$} &{Prop 3.3}\\ 
\cline{4-4}\cline{7-7}
&&& {$ \left\{i_P^G\tau \right\}$ or $ \left\{i_Q^G\tau \right\}$}  && &  {\S3.5, p17} \\
\cline{4-4}\cline{7-7}
&&&s.c. $L$-pacekt& &&{\S3.5-(5)}\\
\hline
\end{tabular}

\medskip

\caption{Tempered $L$-packets of $G_2$}\label{t:G2-L-packet}
\end{table}

\end{proof}

\subsection{Unramified wavefront sets of depth-zero representations}
In the case of the depth-zero representations, it is instructive to compute the unramified wavefront set as well in order to test several possible refinements of our main conjecture. 

\begin{proposition}
The canonical unramified wavefront sets of the depth-zero supercuspidal representations in the parametrization of \cite{AX22} are given in Table \ref{t:G2}. 
\end{proposition}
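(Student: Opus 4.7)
The plan is to carry out a case-by-case verification, organized by the conjugacy class of the maximal parahoric subgroup from which the supercuspidal is compactly induced. For $G_2$ there are three conjugacy classes of maximal parahorics, with reductive quotients (over $\bF_q$) isomorphic to $G_2(\bF_q)$, $SL_3(\bF_q)$, and $(SL_2\times SL_2)(\bF_q)/\mu_2$ (up to isogeny issues we shall have to be careful with). For each vertex $x$ of the building, the set of cuspidal representations $\sigma$ of $\bfG_x(\bF_q)$ is finite and known, so we obtain a finite list of pairs $(x,\sigma)$ indexing the depth-zero supercuspidals $\pi(x,\sigma)$.

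First I would, for each $(x,\sigma)$, read off the enhanced Langlands parameter $(\lambda,s,\CO^\vee_s,\CL^\vee)$ from the Aubert--Xu tables in \cite{AX22}. This yields the pseudo-Levi $H^\vee=(G^\vee)^{\lambda(I_\sk)}$, the semisimple element $s$ (determining the $q$-eigenspace $\fh^\vee_q$), the unique open orbit $\CO^\vee_s\subset\fh^\vee_q$ (using the supercuspidal assumption), and hence the saturation $\bO^\vee_s=G^\vee\cdot\CO^\vee_s$. Since $G^\vee$ for $G_2$ is itself $G_2(\bC)$ with its five nilpotent orbits $\{\Utr,\Ulr,\Usr,\Usreg,\Ureg\}$, the Spaltenstein dual $d^\vee(\bO^\vee_s)$ is then read off directly from the known $d^\vee$ table for $G_2$; this produces the target upper bound in Conjecture \ref{conj-cusp}(1).

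Second, I would compute ${}^K\!\underline\WF(\pi(x,\sigma))$ using the combination of Theorem \ref{t:supercuspidal-WF} and Lusztig's formula \eqref{e:CMBO}: the reduction quotient $\bfG_x^\vee(\bF_q)$ is small enough in each case that one can identify the triple $(\tau,u_0^\vee,y)$ attached to $\sigma$ by Lusztig's classification, and then apply $d_S^\vee$ on the finite side. The finite dual groups involved are of type $G_2$, $A_2$ (or ${}^2\!A_2$), and $A_1\times A_1$, for each of which the Springer correspondence, the special pieces, and hence $d_S^\vee$ are explicit and short to tabulate. Saturating the resulting pair $(\CO,C)$ to a geometric orbit via $d_S$ then gives a candidate for the nilpotent part of the conjectural bound, and recording $(\CO,C)$ itself fills in the appropriate row of Table \ref{t:G2}.

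Third, I would compare the two outputs: for each $(x,\sigma)$, one checks that the $G$-saturation of $d_S(\CO,C)$ is $\le d^\vee(\bO^\vee_s)$ in the closure order on nilpotent $G_2$-orbits. With only five orbits totally ordered as $\Utr<\Ulr,\Usr<\Usreg<\Ureg$, this verification reduces to a finite inspection. The cases where equality fails or is nontrivial (the non-generic constituents of non-singular supercuspidal packets, and the packets with $\lambda\ne 1$ coming from Aubert--Xu's non-tame cuspidal data) are the only ones requiring genuine computation; the nonsingular (Kazhdan--Varshavsky/DeBacker--Reeder) supercuspidals are handled already by Section \ref{s:DBR}. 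The main obstacle I anticipate is bookkeeping: keeping the normalizations of Aubert--Xu's parameter $s$ consistent with the Lusztig parameter $\tau$ on the finite side (they live in dual groups of slightly different isogeny type, and the identification $\bar k\cong\bC$ must be chosen compatibly with $\lambda$), so that the semisimple element controlling $\fh^\vee_q$ and the one controlling $d_S^\vee$ on the finite reductive quotient are genuinely the same up to the expected duality. Once that dictionary is fixed, the verification of Conjecture \ref{conj-main} reduces to reading off the final column of Table \ref{t:G2} and noting that in every line the stated $(\CO,C)$ satisfies $G\cdot d_S(\CO,C)\le d^\vee(\bO^\vee_s)$.
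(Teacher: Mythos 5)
Your proposal follows essentially the same strategy as the paper: list the depth-zero supercuspidals of $G_2$ by their inducing data $(x,\sigma)$, read off the enhanced Langlands parameter from \cite{AX22}, compute the canonical unramified wavefront set via Theorem \ref{t:supercuspidal-WF} combined with Lusztig's formula \eqref{e:CMBO} and $j$-induction on the finite reductive quotient, and then compare by inspection using the explicit $d^\vee$ for $G_2$. The paper's proof is terser (it tabulates everything in Table \ref{t:G2} and works through one representative case, the sixth row with $\bO^\vee=\Ulr$ and $\fh^\vee=\mathfrak{sl}_3$), but the method is the same.

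One point where your write-up slips: at the end you say the verification amounts to checking $G\cdot d_S(\CO,C)\le d^\vee(\bO^\vee_s)$. This is not the right comparison. The geometric wavefront set ${}^{\bar\sk}\WF(\pi)$ is simply the first component $\CO$ of the pair ${}^K\underline\WF(\pi)=(\CO,C)$ (after the identification $\bar\sk\cong\bC$), so Conjecture \ref{conj-cusp}(1) is the inequality $\CO\le d^\vee(\bO^\vee_s)$, or equivalently $\bO^\vee_s\le d(\CO)$. No application of $d_S$ to $(\CO,C)$ is needed for Conjecture \ref{conj-main}; the column $d_S({}^K\underline\WF(\pi))$ in Table \ref{t:G2} is there to test the \emph{refined} equality \eqref{e:okada-variant} and to exhibit that no uniform inequality of that refined type survives, which is a separate point made after the proposition. (Also note $d_S(\CO,C)$ and $d^\vee(\bO^\vee_s)$ live in dual spaces, so the inequality as you wrote it does not type-check in general; for $G_2$ self-duality masks this, but the comparison is still the wrong one.) With that correction, your proof plan agrees with the paper's.
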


\begin{proof}
The depth-zero supercuspidals are parameterized in {\it loc. cit.} as in the following Table \ref{t:G2}. From the realization of the representations by compact induction, we can immediately compute the canonical unramified ${}^K\underline\WF(\pi)$ (in the sense of Okada \cite{okada-WF}) and the geometric wavefront sets, via Theorem \ref{t:supercuspidal-WF}. Then one verifies by inspection the conjecture. The unipotent supercuspidal representations have already been treated in \cite{CMBO-cusp}. We recall that
\[
d^\vee(\Ureg)=1,\quad d^\vee(1)=\Ureg,\text{ and } d^\vee(\bO^\vee)=\Usreg,
\]
if $\bO^\vee=\Usreg$, $\Usr$, or $\Ulr$.

We exemplify the calculation of the wavefront set for $\pi$ in the sixth row, for which $\bO^\vee=A_1$. 
Here ${\fg^\vee}^{\varphi(I_\sk)}=sl(3,\bC)$. The supercuspidal representation is compactly induced from the hyperspecial parahoric from the inflation of a cuspidal representation $\mu$ of $G_2(\bF_q)$. The cuspidal representation $\mu$ belongs to the Lusztig series corresponding to a semisimple element $\tau\in G_2^\vee(\bF_q)$, such that the centralizer of $\tau$ is $SU(3)$. (Recall that we need $q\equiv -1 (3).$) In $SU(3)$, the unipotent class parametrizing $\tau$ is $(12)$,  so to find the Kawanaka wavefront set of $\mu$ we need  the $j$-induced
\[
j_{S_3}^{W(G_2)}((12))=2_1,
\]
the reflection representation of $W(G_2)$. The corresponding nilpotent orbit via Springer's correspondence is $\Usreg.$ This is the (geometric) Kawanaka wavefront set of $\mu$. Since the parahoric is the hyperspecial, it follows that the canonical unramified wavefront set of $\pi$ is indeed $(\Usreg,1)$.

\begin{table}[h]
\begin{tabular}{|c|c|c|c|c|c|c|c|}
\hline
{$\bO^\vee$} & $\varphi(I_\sk)$ & $\varphi(\langle\Fr\rangle)$ & $G_{x,0}/G_{x,0^+}$ & $\pi$ & ${}^K\underline\WF(\pi)$ &$d^\vee(\bO^\vee)$ &$d_S({}^{K}\underline\WF(\pi))$\\
\hline
\hline
$\Usreg$ &$1$ &$1$ &$G_2$ &$G_2(1)$ &$(\Usreg,1)$ &$\Usreg$ &$\Usreg$\\
                    &       &$\mu_2$  &   &$G_2(-1)$ & & &\\
                    &       &$\mu_3$  &   &$G_2(\omega), G_2(\omega^2)$ & & &\\
\hline
$\Usreg$ &$\mu_2$ & &$SO_4$ &singular &$(\Usreg,(12))$ &$\Usreg$ &$\widetilde A_1$\\
\hline
$\Usreg$ &$\mu_3$ & &$SL_3$ &singular, $q\equiv 1 (3)$ &$(\Usreg,(123))$ &$\Usreg$ &$A_1$\\
\hline\hline
$A_1$ &$\mu_3$ && $G_2$ &singular from $SU(3)$ &$(\Usreg,1)$ &$\Usreg$ &$\Usreg$\\
&&&&$q\equiv -1(3)$ &&&\\
\hline\hline
$1$ & & &$G_2$ &nonsingular &$(\Ureg,1)$ &$\Ureg$ &$1$\\
       & & &$SL_3$ &toral &$(\Usreg,(123))$ &$\Ureg$ &$A_1$\\
       & & &$SO_4$ & &$(\Usreg,(12))$ &$\Ureg$ &$\widetilde A_1$\\
       \hline\hline
\end{tabular}
\caption{Depth-zero supercuspidal representations of $G_2$}\label{t:G2}
\end{table}


\end{proof}

To account for an example of an anti-tempered unipotent $E_7$-representation for which ${}^{\bar{\mathsf k}}\WF(\pi)<d^\vee(\bO^\vee_\pi)$ (strict inequality), Okada proposed another, more precise form of the relation between the wavefront set of unipotent representations and the Langlands parameter {\cite[Conjecture 1.4.3]{CMBO23}:
 \begin{equation}\label{e:okada-variant}
d_S({}^K\underline\WF(\pi))=G^\vee\cdot \CO^\vee_{\AZ}.
\end{equation}
}
This holds for unipotent representations with real infinitesimal character \cite{CMBO23}, and there are no known counterexamples for unipotent representations. However, for more general representations, this equality can't always hold as stated, for example, because of the case of nongeneric regular depth-zero supercuspidals, see section \ref{s:DBR}. 

A natural attempt to fix it is to intersect with $\fh^\vee={\fg^\vee}^{\varphi(I_\sk)}$, i.e., to expect that  $\overline{d_S({}^K\underline\WF(\pi))}\cap \fh^\vee$ is an irreducible $H^\vee(s)$-variety, $s=\varphi(\Fr)$, hence the closure of a nilpotent $H^\vee(s)$-orbit $\CO^\vee$ such that
{\begin{equation}
\bO^\vee=G^\vee\cdot \CO^\vee_\AZ.
\end{equation}
}
Often this seems to be the case (for example, this resolves trivially the case of regular depth-zero supercuspidals), but again not always, as the $G_2$ examples above show. 

More precisely, consider $\pi$ in the sixth row of Table \ref{t:G2} with $\bO^\vee_\pi=A_1$ and ${}^{K}\underline\WF(\pi)=(\Usreg,1)$
and $d_S({}^{K}\underline\WF(\pi))=\Usreg$. The intersection $\overline{d_S({}^K\underline\WF(\pi))}\cap \fh^\vee$ is the principal orbit in $\mathfrak{sl}(3,\bC)$. However, the Langlands nilpotent orbit $\bO^\vee=A_1$ comes from the {\it subregular} orbit in $\mathfrak{sl}(3,\bC)$.

\

Interestingly, the $G_2$ depth-zero supercuspidals in Table \ref{t:G2} seem to indicate that there is no generalization of (\ref{e:okada-variant}) even if one would replace the equality with an inequality, i.e.:
\begin{align*}
d_S({}^K\underline\WF(\pi))<\bO^\vee,\quad &\pi\text{ in the fourth or fifth rows}, \\
d_S({}^K\underline\WF(\pi))>\bO^\vee,\quad &\pi\text{ in the sixth row}. 
\end{align*}

\section{Acknowledgements} This paper builds on the previous work of the first-named author with Lucas Mason-Brown and Emile Okada. It is a pleasure to thank them for many useful conversations about the wavefront set. The second-named author thanks George Lusztig and Cheng-Chiang Tsai for insightful discussions. We also thank Baiying Liu and Freydoon Shahidi for sharing their work with us and for an interesting correspondence, and Hiraku Atobe and Alberto Minguez for clarifying the connection between Remark \ref{r:extreme} and their AZ-algorithm \cite{AM}. Special thanks are due to the referee for a very detailed and useful report that led to many improvements.

D.C. was partially supported by the EPSRC grant ``New Horizons'' EP/V046713/1, 1/2021-12/2023.

For the purpose of open access, the authors have applied a CC BY public copyright license to any author accepted manuscript arising from this submission.

\smallskip

On behalf of all authors, the corresponding author states that there is no conflict of interest.

\ifx\undefined\bysame
\newcommand{\bysame}{\leavevmode\hbox to3em{\hrulefill}\,}
\fi

\end{document}